\newtheorem{propos}{Proposition}
\newtheorem{corol}{Corollary}
\newtheorem{definition}{Definition}
\newtheorem{exa}{Example}
\newtheorem{rem}{Remark}
\def\be{\begin{equation}}
\def\ee{\end{equation}}
\def\bea{\begin{eqnarray}}
\def\eea{\end{eqnarray}}
\newcommand*{\vect}[1]{\bm{#1}}
\newcommand{\f}{\vect{f}}
\newcommand{\g}{\vect{g}}
\newcommand{\x}{\vect{x}}
\def\bv{\mathbf{v}}
\def\x{{\bf x}}
\def\X{{\bf X}}
\def\Y{{\bf Y}}
\def\f{{\bf f}}
\def\bv{{\bf v}}
\def\bw{{\bf w}}
\def\A{{\bf A}}
\def\B{{\bf B}}
\def\U{{\bf U}}
\def\V{{\bf V}}
\def\W{{\bf W}}
\definecolor{orange}{rgb}{1,0.5,0}
\begin{document}

\title{Koopman Operator Spectrum for Random Dynamical Systems}
\author{Nelida \v{C}rnjari\'{c}-\v{Z}ic\footnote{Faculty of Engineering, University of Rijeka, Croatia, nelida@riteh.hr}, 
	Senka Ma\'{c}e\v{s}i\'{c}\footnote{Faculty of Engineering, University of Rijeka, Croatia, senka.macesic@riteh.hr}, 
	and 
	Igor Mezi\'{c}\footnote{Faculty of Mechanical Engineering and Mathematics, University of California, Santa Barbara, CA, 93106, USA mezic@engr.ucsb.edu}}

\maketitle

\begin{abstract}
	{
		In this paper we consider the Koopman operator associated with the discrete and the continuous time random dynamical system (RDS). 
		We provide  results that characterize the spectrum and the eigenfunctions of the stochastic Koopman operator associated with different types of linear RDS. Then we consider the RDS for which the associated Koopman operator family is a semigroup, especially those for which the generator can be determined. 
		We define a stochastic Hankel DMD (sHankel-DMD) algorithm for numerical approximations of the spectral objects (eigenvalues, eigenfunctions) of the stochastic Koopman operator and prove its convergence. We apply the methodology to a variety of examples, revealing objects in spectral expansions of the stochastic Koopman operator and enabling model reduction.}
	
	{\it Keywords:} stochastic Koopman operator, random dynamical systems, stochastic differential equations, dynamic mode decomposition
	
	{\it Mathematics Subject Classification: }  37H10, 47B33, 37M99, 65P99

\end{abstract}

\section{Introduction}
Prediction and control of the evolution of large complex dynamical systems is a modern-day science and engineering challenge. Some dynamical systems can be modeled well enough by using the standard mathematical tools, such as differential or integral calculus. In these cases the simplifications of the system are typically introduced by neglecting some of the phenomena with a small impact on the behavior of the system. However, there are many dynamical systems for which the mathematical model is too complex or even does not exist, but  data can be obtained by monitoring some observables of the  system. In this context, data-driven analysis methods need to be developed, including techniques to extract simpler, representative components of the process that can be used for modeling and prediction.

One approach for decomposing the complex systems into simpler structures is via the spectral decomposition of the associated Koopman operator. Koopman operator was introduced in \cite{Koopman:1931} in the measure-preserving setting, as a composition operator acting on the Hilbert space of square-integrable functions. The increased interest in the spectral operator-theoretic approach to dynamical systems in last decade starts with the works \cite{mezicandbanaszuk:2004} and \cite{mezic:2005}
(see also the earlier \cite{MezicandBanaszuk:2000}), where the problem of decomposing the evolution of an ergodic dissipative dynamical system from the perspective of operator theory was studied, and data-driven methods for computation of eigenvalues and eigenfunctions were provided based on rigorous harmonic analysis methods. The application of the theory to complex systems are numerous, for example in  fluid flows \cite{bagheri:2013,sharma2016}, infectious disease dynamics \cite{proctor}, power systems \cite{susukiandmezic:2012,susukiandmezic:2016} etc. The advantage of using the Koopman operator framework lies in the fact that it can be applied within the data-driven environment, even if the underlying mathematical model is not known. In addition to the analysis of the system properties the Koopman  decomposition of complex dynamical structures can provide approximations to the evolution of a possibly high dimensional system in lower dimensions, enabling  model reduction.  
An overview of the spectral properties of the Koopman operator and its applications prior to 2012 is given in \cite{Mezi2013,BudisicMezic_ApplKoop}.  

{ Of particular importance in Koopman operator theory in the deterministic case is the interplay between the geometry of the geometry of the underlying dynamics and  the level sets of the eigenfunctions of the Koopman operator. Namely, invariant sets can be determined as level sets of Koopman eigenfunctions at eigenvalue 
	$0$ (continuous time) or $1$ (discrete time), \cite{Mezic:1994,MezicandWiggins:1999}, periodic sets can be determined as level sets of Koopman operator eigenfunctions at the imaginary axis (continuous time) or the unit circle (discrete time), the concept of isostables - level sets of eigenfunctions associated with eigenvalues with negative real part (continuous time) or inside the unit circle (discrete time) generalize the geometric concept of Fenichel fibers \cite{Mauroyetal:2013}. Also, zero level sets of certain families 
	of eigenfunctions determine stable, unstable, and center manifolds to invariant sets \cite{Mezic:2015,Mezic:2017}. In this paper we enable generalization of such concepts by developing the theory of stochastic Koopman operators associated with RDS.} 

A variety of methods for determining the numerical approximation of the Koopman spectral decomposition, known under the name Koopman mode decomposition (KMD) have been developed.
KMD consists of the triple of Koopman eigenvalues, eigenfunctions and modes, which are the building blocks for the evolution of observables under the dynamics of a  dynamical system. A general method for computing the Koopman modes, based on the rigorous theoretical results for the generalized Laplace transform, is known under the name Generalized Laplace Analysis (GLA) \cite{Mezi2013,BudisicMezic_ApplKoop}, whose first version was advanced in \cite{MezicandBanaszuk:2000}. Another method is the Dynamic Mode Decomposition (DMD) method.
DMD was firstly introduced in \cite{schmid:2008} for the study of the fluid flows,   without any reference to the Koopman operator. The connection between the KMD and DMD was first pointed out in \cite{rowleyetal:2009}. Like GLA,  DMD  is a data-driven technique. Due to the fact it could be numerically implemented  relatively easily, since it relies on standard linear algebra concepts, this method become extremely popular in the data-driven systems research community. 
Starting with  \cite{schmid:2010} many versions of the DMD algorithm have been introduced to efficiently compute the spectral objects of the Koopman operator under various assumptions on the data. In \cite{Tu2014jcd} the exact DMD algorithm was introduced, while in \cite{williamsEDMD} the extension of the DMD algorithm, under the name Extended Dynamic Mode Decomposition (EDMD) was proposed. These two methods rely on the more standard approaches to representing a linear operator with respect to a specific basis (finite section method), rather than a sampling method - as in the companion-matrix based DMD (see \cite{Drmac2017}, section 2.1).  Susuki and Mezi\'{c} introduced in \cite{susukimezic} a further extension of the DMD algorithm, which combines the Prony method with the DMD algorithm, so that the Hankel matrix is used instead of the companion matrix to compute the Koopman spectrum on the single observable as well as on the vector of observable functions. In \cite{ArbabiMezic2016}, Arbabi and Mezi\'{c} referred to this algorithm as to the Hankel DMD and proved that under certain assumptions the obtained eigenvalues and eigenfunctions converge to the exact eigenvalues and eigenfunctions of the Koopman operator. The assumptions were removed, and further connections between the spectra of EDMD matrices and eigenvalues of the Koopman operator were proven in \cite{KordaandMezic:2017}.  

The majority of works analyzing or practically using the spectral properties of the Koopman operator assume that the dynamical system under consideration is autonomous. Similarly, the proposed numerical algorithms for evaluating KMD were almost exclusively applied to autonomous dynamical systems. The generalization of the Koopman operator framework to nonautonomous system was introduced in \cite{suranamezic}, where the definitions of the nonautonomous Koopman eigenvalues, eigenfunction and modes, as building blocks of the dynamical system, are given. KMD for nonautonomous systems was studied in \cite{MacesicZicMezic}, where the possibility of using the Arnoldi-like DMD algorithms for evaluating the Koopman eigenvalues and eigenfunctions in the nonautonomous case was carefully explored and the appropriate extensions of the algorithm were proposed. 

Another possible generalization of the Koopman operator framework is its extension to random dynamical systems (RDS) and stochastic systems. {There is a long history of analyzing the dynamics of the stochastic Markov processes through the study of the spectral properties of the associated transfer operators \cite{Dynkin,Yosida,LasotaMackey,ArnoldSDE}. In \cite{williamsEDMD}, it was realized that if the data provided to the EDMD algorithm are generated by a Markov stochastic process instead of a deterministic dynamical system, the algorithm approximates the eigenfunctions of the Kolmogorov backward equation. Klus and  coworkers used the Ulam's method and EDMD algorithm in the series of papers \cite{KlusKoltaiSchutte,KlusSchutte}  to approximate the spectral objects of the Koopman operator and its adjoint Perron-Frobenius operator. There is a variety of  numerical techniques used to approximate different transfer operators associated with the stochastic systems, as well as their spectral objects. A review can be found in \cite{KlusNuskeKoltaiSchutte}.}

{ In this paper we consider the stochastic Koopman operator associated with the discrete and  continuous-time RDS, using the definition and classification of RDS from \cite{Arnold}.}  {The Koopman operator for discrete RDS was first introduced in  \cite{MezicandBanaszuk:2000}, where the action of the stochastic Koopman operator on an observable was defined by taking an expectation of values of the observable at the next time step. The eigenspace at $1$ was shown to relate to invariant sets of stochastic dynamics,  followed by \cite{mezicandbanaszuk:2004,mezic:2005} in which analysis of the geometry of eigenspaces associated with any additional eigenvalues on the unit circle was pursued and shown to relate to periodic sets of the underlying RDS.} Here we extend the definition of the stochastic Koopman operator to the continuous-time RDS and explore some properties of the associated stochastic Koopman operator family, its generators,  eigenvalues and eigenfunctions. 

The characterization of the dynamics of such systems by using the eigenvalues and the eigenfunctions of the related generators was studied in recent papers \cite{ShnitzerTalmonSlotine}, \cite{TantetChekrounDijkstra} and \cite{Giannakis}. Giannakis in \cite{Giannakis} develops a framework for the KMD based on the representation of the Koopman operator in a smooth orthonormal bases determined from the time-ordered noisy data through the diffusion map algorithm. Using this representation, the Koopman eigenfunctions are approximated as the eigenfunctions of the related advection-diffusion operator. A similar approach by using the manifold learning technique via diffusion maps was used in \cite{ShnitzerTalmonSlotine}  to capture the inherent coordinates for building an intrinsic representation of the dynamics generated by the Langevin stochastic differential equation. The linear operator is then used to describe the evolution of the constructed coordinates on the state space of the dynamical system. The obtained coordinates are approximations of the eigenfunctions of the stochastic Koopman generator, so that the described approach is closely connected with the Koopman operator techniques for building the representation of the system dynamics. 

In order to numerically approximate the spectral objects of the stochastic Koopman operator, we explore the possibility of application of DMD algorithms that were originally developed for evaluating KMD in deterministic systems. As already mentioned, algorithms that are typically used to extract relevant spectral information are, for example, the Schmid DMD algorithm \cite{schmid:2010,schmid:2011}, the exact DMD algorithm \cite{Tu2014jcd}, and the Arnoldi-like algorithm \cite{rowleyetal:2009}. The application of DMD algorithm to noisy data is studied in \cite{hemati2015biasing} and \cite{TakeishiKawaharaYairi}. In order to remove the bias errors produced by using the standard DMD algorithms on data with the observation noise that can arise, for example, as a consequence of imprecise measurements, Hemati et al. developed in \cite{hemati2015biasing}  the total least squares DMD. Takeishi et al. \cite{TakeishiKawaharaYairi} considered the  numerical approximations of spectral objects of the stochastic Koopman operator for the RDS with observation noise by using the DMD algorithm, and proved its convergence to the stochastic Koopman operator under certain assumptions, following the work on deterministic systems in \cite{ArbabiMezic2016}. Due to the systematic error produced by the standard DMD algorithm they developed the version of the algorithm that also takes into account the observation noise and refer to it as the subspace DMD algorithm. Here we provide the convergence proof for the sHankel-DMD algorithm, an extension of the Hankel DMD algorithm.

There exist some numerical issues that can arise in DMD algorithms applied to deterministic systems, leading to poor approximations of eigenvalues and eigenvectors. In order to overcome these difficulties, Drma\v{c} et al. proposed  in \cite{Drmac2017} a data-driven algorithm for computing DMD, called DMD refined Rayleigh Ritz   (DMD RRR) algorithm, which enables selection of Ritz pairs based on the data-driven computation of the residual, and substantially improves the quality of the retained spectral objects. We use it in the current context of RDS. In most considered RDS examples, we determine first the Koopman eigenvalues and eigenfunctions of the related deterministic dynamical system and then explore the algorithm behavior on the considered RDS. Despite the fact that in some cases we noticed high sensitivity of the numerical algorithm to the noise introduced into the system, in most cases very satisfactory approximations are obtained.  

The paper is organized as follows. In Section \ref{sec:SKO}, the definition of the stochastic Koopman operator family for the discrete and the continuous-time RDS is given. { The classes of RDS that we consider are: the discrete time RDS, the continuous time RDS generated by the random differential equations (RDE) and the continuous time RDS generated by the stochastic differential equations (SDE). In accordance with this classification of RDS, we provide some results that characterize the spectral objects of the Koopman operator associated with the linear RDS.  In Section \ref{sec:semigroup} we limit our considerations to the RDS for which the associated stochastic Koopman operator family is a semigroup. These are the RDS which can be identified with the time homogeneous Markov stochastic processes. We also provide a characterization of the generators of the Koopman operator family.  
	In Section \ref{sec:NumSKO}, a brief description of the DMD RRR algorithm is provided and different approaches for its application to the RDS are described. We define the stochastic Hankel DMD (sHankel-DMD) algorithm for approximating the eigenvalues and eigenfunctions of the stochastic Koopman operator and prove, under the assumption of ergodicity, its convergence to the true eigenvalues and eigenfunctions of the stochastic Koopman operator.} Finally, the computations of Koopman eigenvalues and eigenfunctions by using the described DMD algorithms are illustrated on variety of numerical examples in Section \ref{sec:NumExamples}.

\section{Stochastic Koopman operator and Linear RDS}\label{sec:SKO}
First, we give the definition and a brief description of RDS, which follows terminology and results in \cite{Arnold,ArnoldCrauel}.

Let $(\Omega,\mathcal{F},P)$ be a probability space and $\mathbb{T}$ a semigroup (we can think of it as time). Suppose that $\theta:=(\theta(t))_{t \in \mathbb{T}}$ is a group or semigroup of  measurable transformations of $(\Omega,\mathcal{F},P)$ preserving a measure $P$ (i.e.~$\theta(t) P = P$), such that a map $(t,\omega) \rightarrow \theta(t)\omega$ is measurable. The quadruple $(\Omega,\mathcal{F},P,(\theta(t))_{t \in \mathbb{T}})$ is a metric dynamical system.

A random dynamical system (RDS) on the measurable space $(M,\mathcal{B})$ over a dynamical system  $(\Omega,\mathcal{F},P,(\theta(t))_{t \in \mathbb{T}})$ is a measurable map
$\varphi : \mathbb{T} \times \Omega \times M \rightarrow M$ satisfying a cocycle property with respect to $\theta(\cdot)$, which means that the mappings $\varphi(t,\omega):= \varphi(t,\omega,\cdot) : M \rightarrow M$ form the cocycle over $\theta(\cdot)$, i.e.:
\begin{equation}\label{eq:cocycle}
\varphi(0,\omega)= id_M, \ \ \varphi(t+s,\omega) = \varphi(t,\theta(s)\omega) \circ \varphi(s,\omega), \ \ \text{for all  } s, t \in \mathbb{T}, \omega \in \Omega.
\end{equation}
We call $\theta$ a driving flow and $(\Omega,\mathcal{F},P,(\theta(t))_{t \in \mathbb{T}})$ a driving dynamical system. 

{If $\mathbb{T}$ is discrete (for example, $\mathbb{T}=\mathbb{Z}$  or  $\mathbb{T}=\mathbb{Z}^{+}$), we speak about the discrete-time RDS, if $\mathbb{T}$ is continuous (for example, $\mathbb{T}=\mathbb{R}$  or  $\mathbb{T}=\mathbb{R}^{+}$), we have the continous-time RDS.  }

In the next definition we introduce the Koopman operator family associated with the RDS.
\begin{definition}\label{def:KoopmanStochasticGeneral}
	The stochastic Koopman operator $\mathcal{K}^{t}$ associated with the RDS $\varphi$ is defined on a { space of functions (observables) $f : M \rightarrow \mathbb{C}$ for which the functional} 
	\begin{equation}\label{eq:KoopmanStochasticGeneral}
	\mathcal{K}^{t} f(\x)=\mathbb{E} [f (\varphi(t,\omega) \x )], \quad \x \in M
	\end{equation}
	exists.
	We refer to the family of operators $\left(\mathcal{K}^{t}\right)_{t \in \mathbb{T}}$ as to the stochastic Koopman operator family. 
\end{definition}
{
	For example, if we work with continuous  functions on a compact metric space, the functional above will exist. In the case when spectral expansions of the Koopman operator are required, the specification of the space might become more detailed (e.g. some Hilbert space will suffice in many examples, see Example \ref{nrot}).} { However, in order to keep the exposition simple here we mostly (except in some examples) do not specify  the domain of the operator
	as our analysis largely does not depend on it. }

\begin{definition}\label{def:KoopmanEigenvalues}
	The observables $\phi^{t} : M \rightarrow \mathbb{C}$ that satisfy equation 
	\begin{equation} \label{KoopmanStochasticContinEigen}
	\mathcal{K}^{t} \phi^{t}(\x)= \lambda^{S}(t) \phi^{t}(\x)
	\end{equation}
	we call the eigenfunctions of the stochastic Koopman operator, while the associated values $\lambda^{S}(t)$ we call the stochastic Koopman eigenvalues.
\end{definition}

{ The above definition can be viewed geometrically: the level sets of a stochastic Koopman operator eigenfunction are unique in the sense that the expectation of the value of the level set at time $t$ is precisely $\lambda^{S}(t) \phi^{t}(\x)$ i.e. they depend only on the level set that the dynamics started from at $t=0$. An even stronger statement is available for eigenvalues on the unit circle: in that case, the precise location of the state of the system after time $t$ is within a level set of the associated eigenfunction. For example, if $\phi(\x)$ is an eigenfunction associated with eigenvalue $0$, then its level sets are invariant sets for the underlying RDS (see the proof in \cite{mezicandbanaszuk:2004} for the discrete-time RDS).  }

{
	There are three main types of RDS \cite{Arnold} that are particularly important from the point of view of practical applications. These are: the discrete time RDS, the continuous time RDS generated by RDE and the continuous time RDS generated by SDE. In what follows we describe the general settings for each of these types and provide the results that characterize the eigenvalues and eigenfunctions of the stochastic Koopman operators associated with the linear RDS.}

\subsection{Stochastic Koopman operator for discrete time RDS}\label{subsec21:DiscreteRDS}
For a discrete time RDS we have $\mathbb{T} = \mathbb{Z}^{+} \cup \{0\}$. Let $(\Omega,\mathcal{F},P, (\theta(t))_{t \in \mathbb{T}})$ be a given metric dynamical system and let denote $\psi=\theta(1)$. {The discrete RDS $\varphi(n,\omega)$ over $\theta$ can be defined by the one step maps $T(\omega,\cdot) : M \rightarrow M$ 
	$$T(\omega,\cdot):=\varphi(1,\omega),$$
	since by applying the cocycle property one gets
	\begin{equation}\label{eq:DRDS_Cocycle}
	\varphi(n,\omega) = T(\psi^{n-1}(\omega),\cdot) \circ \cdots \circ T(\psi(\omega),\cdot) \circ T(\omega,\cdot), \ \ n \ge 1.
	\end{equation} 
	From the fact that maps $\psi^{i}=\theta(i)$ preserve the measure $P$, it follows that the maps $T(\psi^{i}(\omega),\cdot)$ are identically distributed, thus $(T(\psi^{i}(\omega),\cdot))_{i \in \mathbb{T}}$ is a stationary sequence of random maps on $M$ \cite[Section~2.1]{Arnold}. }
According to (\ref{eq:cocycle}), $\varphi(0,\omega) = id_M$.
The action of the discrete RDS $\varphi(n,\omega)$ on $\x \in M$ gives its value at $n$-th step and we denote it as
\begin{equation}\label{eq:DRDS_Tn}
T^{n}(\omega, \x) = \varphi(n,\omega) \x.
\end{equation}

In the proposition that follows we consider the eigenvalues and the eigenfunctions of the stochastic Koopman operator related to the discrete RDS induced by a linear map $T$.

\begin{propos}\label{LDRDSthm}
	Suppose that $\A : \Omega \rightarrow \mathbb{R}^{d \times d}$ is measurable and that the one step map $T:\Omega\times\mathbb{R}^{d}\rightarrow \mathbb{R}^{d}$ of a discrete RDS is defined by 
	\begin{equation}\label{eq:LS_Discrete}
	T(\omega,\x)=\A(\omega) \x.
	\end{equation}
	{
		Denote by $\mathbf{\Phi}(n,\omega)$ the linear RDS satisfying $T^{n}(\omega,\x) = \mathbf{\Phi}(n,\omega) \x$,
		i.e., 
		$$\mathbf{\Phi}(n,\omega) = \A(\psi^{n-1}(\omega)) \cdots \A(\psi(\omega))\A(\omega).$$
		Assume that $\hat{\mathbf{\Phi}}(n) = \mathbb{E} [\mathbf{\Phi}(n,\omega)]$ are diagonalizable, with simple eigenvalues $\hat{\lambda}_j(n)$} and left and right eigenvectors $ \hat{\bw}_j^{n},\ \hat{\bv}_j^{n}$,  $j=1,\ldots,d$.
	Then the eigenfunctions of the stochastic Koopman operator $\mathcal{K}^n$ are
	\begin{equation}\label{eq:LS_Discrete_ef}
	\phi_j^{n}(\x)=\langle \x,  \hat{\bw}_j^{n} \rangle,  \ \ j=1,\ldots,d,
	\end{equation}
	with the corresponding eigenvalues { ${\lambda }^{S}_j(n)=\hat{\lambda}_j(n)$.}
	
	Moreover, if matrices $\A(\omega)$, $\omega\in\Omega$ commute and are diagonalizable with the simple eigenvalues $\lambda_j(\omega)$ and corresponding left eigenvectors $\bw_j, \ j=1,\ldots,d$, then 
	$$\hat{\bw}_j^{n}=\bw_j \quad \textrm{   and    } \quad { {\lambda }^{S}_j(n)=\mathbb{E}\left[\prod_{i=1}^{n}\lambda_j(\psi^{i-1}(\omega))\right].}$$
	
	{
		Furthermore, $\hat{\bv}_{j}^{n}$, $j=1,\ldots,d$ are the Koopman modes of the full-state observable and the following expansion is valid
		\begin{equation}\label{eq:KMD_RDS}
		\mathcal{K}^{n}\x=\displaystyle{\sum_{j=1}^d {\lambda_{j}^{S}(n)} \langle \x, \hat{\bw}_j^{n} \rangle   \hat{\bv}_j^{n}}.
		\end{equation}	
	}
\end{propos}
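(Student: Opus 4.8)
The plan is to reduce the spectral problem for $\mathcal{K}^{n}$ to the ordinary eigenproblem for the single deterministic matrix $\hat{\mathbf{\Phi}}(n)=\mathbb{E}[\mathbf{\Phi}(n,\omega)]$. The key observation is that on a linear observable $f(\x)=\langle\x,\bw\rangle$ with $\bw$ deterministic, Definition \ref{def:KoopmanStochasticGeneral} together with $T^{n}(\omega,\x)=\mathbf{\Phi}(n,\omega)\x$ gives
\[
\mathcal{K}^{n}f(\x)=\mathbb{E}\big[\langle \mathbf{\Phi}(n,\omega)\x,\bw\rangle\big]=\big\langle \mathbb{E}[\mathbf{\Phi}(n,\omega)]\,\x,\bw\big\rangle=\langle \hat{\mathbf{\Phi}}(n)\x,\bw\rangle,
\]
where pulling $\mathbb{E}$ through the pairing is legitimate because $\bw$ is constant in $\omega$ and the integrability required is exactly what defines the domain of $\mathcal{K}^{n}$. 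Choosing $\bw=\hat{\bw}_j^{n}$ and using the defining left-eigenvector identity $(\hat{\bw}_j^{n})^{*}\hat{\mathbf{\Phi}}(n)=\hat{\lambda}_j(n)(\hat{\bw}_j^{n})^{*}$ turns the right-hand side into $\hat{\lambda}_j(n)\langle\x,\hat{\bw}_j^{n}\rangle$, which is precisely \eqref{KoopmanStochasticContinEigen} with $\phi_j^{n}(\x)=\langle\x,\hat{\bw}_j^{n}\rangle$ and $\lambda_j^{S}(n)=\hat{\lambda}_j(n)$. This settles the first assertion.

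For the commuting case I would use the linear-algebra fact that a family of pairwise-commuting diagonalizable matrices is simultaneously diagonalizable, so the matrices $\A(\omega)$ share a single $\omega$-independent eigenbasis; in particular the left eigenvectors $\bw_j$ are common to all of them, $(\bw_j)^{*}\A(\omega)=\lambda_j(\omega)(\bw_j)^{*}$. Applying this identity successively, from the left, to the factors of $\mathbf{\Phi}(n,\omega)=\A(\psi^{n-1}(\omega))\cdots\A(\omega)$ shows that $\bw_j$ is a left eigenvector of $\mathbf{\Phi}(n,\omega)$ with eigenvalue $\prod_{i=1}^{n}\lambda_j(\psi^{i-1}(\omega))$. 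Taking expectations of this scalar relation and using linearity of $\mathbb{E}$ gives $(\bw_j)^{*}\hat{\mathbf{\Phi}}(n)=\mathbb{E}\big[\prod_{i=1}^{n}\lambda_j(\psi^{i-1}(\omega))\big](\bw_j)^{*}$, whence $\hat{\bw}_j^{n}=\bw_j$ and $\lambda_j^{S}(n)=\mathbb{E}\big[\prod_{i=1}^{n}\lambda_j(\psi^{i-1}(\omega))\big]$ as claimed.

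For the full-state expansion I would exploit that, since the eigenvalues $\hat{\lambda}_j(n)$ are simple, the left and right eigenvectors can be normalized into biorthogonal bases, $(\hat{\bw}_i^{n})^{*}\hat{\bv}_j^{n}=\delta_{ij}$. Expanding the identity (full-state) observable in the right-eigenvector basis then yields $\x=\sum_{j=1}^{d}\langle\x,\hat{\bw}_j^{n}\rangle\,\hat{\bv}_j^{n}$, and the same expectation-pull-through as above shows that the componentwise action of $\mathcal{K}^{n}$ on the vector observable $\x\mapsto\x$ is multiplication by $\hat{\mathbf{\Phi}}(n)$. Applying $\hat{\mathbf{\Phi}}(n)\hat{\bv}_j^{n}=\hat{\lambda}_j(n)\hat{\bv}_j^{n}$ term by term produces \eqref{eq:KMD_RDS} and identifies the $\hat{\bv}_j^{n}$ as the Koopman modes of the full-state observable.

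I expect the only genuinely non-routine step to be the commuting case, where everything hinges on simultaneous diagonalizability supplying an $\omega$-independent eigenbasis; the remaining effort is bookkeeping, namely fixing one consistent inner-product/transpose convention so that the left-eigenvector relations align with the $\mathbb{E}$-reduction, and checking the integrability needed to interchange $\mathbb{E}$ with the linear maps, which is guaranteed by the domain condition in Definition \ref{def:KoopmanStochasticGeneral}.
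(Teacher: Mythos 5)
Your proposal is correct and follows essentially the same route as the paper: reduce the spectral problem to the deterministic eigenproblem for $\hat{\mathbf{\Phi}}(n)$ by pulling the expectation through the linear pairing, invoke simultaneous diagonalizability of the commuting diagonalizable family for the second claim, and use the biorthogonal left/right eigenvector expansion of $\x$ together with linearity of $\mathcal{K}^{n}$ for \eqref{eq:KMD_RDS}. The only cosmetic difference is that the paper routes the first computation through the adjoint $\mathbf{\Phi}(n,\omega)^{*}$ acting on $\hat{\bw}_j^{n}$ rather than through left multiplication of $\hat{\mathbf{\Phi}}(n)$ on $\x$, which is the convention issue you already flagged.
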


\begin{proof}
	The action of the stochastic Koopman operator on functions defined by (\ref{eq:LS_Discrete_ef}) is equal to
	{
		\begin{align}
		\mathcal{K}^{n} \phi_j^{n}( \x)&=\mathbb{E}[\langle \mathbf{\Phi}(n,\omega)\x, \hat{\bw}_j^{n} \rangle] =\mathbb{E}[\langle \x, \mathbf{\Phi}(n,\omega)^* \hat{\bw}_j^{n} \rangle] =\langle \x, \mathbb{E} [\mathbf{\Phi}(n,\omega)^*]\hat{\bw}_j^{n} \rangle \nonumber \\
		& = \langle \x, \hat{\mathbf{\Phi}}(n)^{*} \hat{\bw}_j^{n} \rangle = \hat{\lambda }_j(n) \langle \x, \hat{\bw}_j(n) \rangle = \hat{\lambda }_j(n) \phi_j^{n}( \x) ,
		\end{align}
		where we have used that 
		$$\mathbb{E} [\mathbf{\Phi}(n,\omega)^*]=(\mathbb{E} [\mathbf{\Phi}(n,\omega)])^* = \hat{\mathbf{\Phi}}(n)^{*}.$$
	}
	
	In the case when matrices $\A(\omega), \omega \in \Omega$ commute and are diagonalizable, {they are simultaneously diagonalizable (see \cite[Theorem~1.3.12]{HornJohnson}),~i.e., there exists a single invertible matrix $\V$, so that $\A(\omega)= \V \boldsymbol{\Lambda}(\omega) \V^{-1}$, where $\boldsymbol{\Lambda}(\omega) = \mbox{diag}(\lambda_{1}(\omega),\ldots,\lambda_{d}(\omega))$. It is clear that the columns of the matrix $\V$ are the common right eigenvectors of the matrices $\A(\omega), \omega \in \Omega$, and that $\W^{*}=\V^{-1}$, where $\W$ is the matrix of the left eigenvectors. It is straightforward that 
		$$\mathbf{\Phi}(n,\omega)= \V \prod_{i=1}^{n}\mathbf{\Lambda}(\psi^{i-1}(\omega)) \W^{*}. $$
		Therefore
		$$\hat{\mathbf{\Phi}}(n)= \V \mathbb{E} \left[\prod_{i=1}^{n}\mathbf{\Lambda}(\psi^{i-1}(\omega)) \right] \W^{*},$$
		and we easily conclude that $\hat{\bw}_j=\bw_j$ and ${\lambda^{S}_j(n)}=\mathbb{E} \left[\prod_{i=1}^{n}{\lambda_j}(\psi^{i-1}(\omega)) \right]$.}
	
	{Equation (\ref{eq:KMD_RDS}) can be easily proved by using the decomposition of the state $\x$ in the base $ \hat{\bw}_j^{n},\ \hat{\bv}_j^{n}$,  $j=1,\ldots,d$, the linearity of the Koopman operator, and its action on the eigenfunctions given with (\ref{eq:LS_Discrete_ef}), i.e.,
		\begin{equation}
		\mathcal{K}^{n}\x=\mathcal{K}^{n} \displaystyle{\sum_{j=1}^d \langle \x, \hat{\bw}_j^{n} \rangle   \hat{\bv}_j^{n}} = \displaystyle{\sum_{j=1}^d \mathcal{K}^{n}  \langle \x, \hat{\bw}_j^{n} \rangle   \hat{\bv}_j^{n}} =  \displaystyle{\sum_{j=1}^d {\lambda_{j}^{S}(n)} \langle \x, \hat{\bw}_j^{n} \rangle   \hat{\bv}_j^{n}}.
		\end{equation}	}
\end{proof}

\begin{rem} We will use the term principal eigenfunctions for the eigenfunctions of the form $\phi_j(\x) = \langle \x, \bw_j \rangle $, $j=1,\ldots,d$ that appear in Proposition \ref{LDRDSthm} and in some other propositions in the rest of the paper. Also, the eigenvalues corresponding to them we call the principal eigenvalues.
	{ It follows from (\ref{eq:KMD_RDS}) that in the linear case, the action of the Koopman operators on the full state observable can be derived by using just the principal eigenfunctions, eigenvalues and modes.  }
\end{rem}

\vspace{5pt}

\subsection{Stochastic Koopman operator for RDS { generated} by the random differential equations}
Suppose that $\mathbb{T}=\mathbb{R}$ or $\mathbb{T} = \mathbb{R}^{+} \cup \{0\}$. Let $(\Omega,\mathcal{F},P, (\theta(t))_{t \in \mathbb{T}})$ be a metric dynamical system.
We consider a continuous-time RDS generated by the random differential equation (RDE) of the following form
\begin{equation}\label{eq:RDE}
\dot{\x}=F(\theta(t)\omega,\x),
\end{equation}
defined on the manifold $M$, where $\theta(t)\omega$ is associated with the random dynamics.
In this type of equations the randomness refers just to the random parameters, which do not depend on the state of the system. RDE (\ref{eq:RDE}) generates an RDS $\varphi$ over $\theta$, whose action is defined by
\begin{equation}\label{eq:RDSbyRDE}
\varphi(t,\omega) \x = \x + \int_{0}^{t} F(\theta(s)\omega,\varphi(s,\omega) \x ) ds.
\end{equation}
The properties of this RDS under different regularity properties of the function $F$ can be found in {\cite[Section~2.2]{Arnold}}.
A set of trajectories starting at $\x$ that are generated by (\ref{eq:RDE}) is given by
$\varphi({t, \omega}) \x$ and it defines the family of random variables. We say that this is a solution of the RDE with the initial condition $\varphi(0,\omega) \x=\x$. Since the solutions of the RDE are defined pathwise, for each fixed  $\omega$ the trajectory can be determined as a solution of deterministic ordinary differential equation, so that the RDE  $(\ref{eq:RDE})$ can be seen as a family of ordinary differential equations. 

In the next proposition we derive the eigenvalues and the eigenfunctions of the stochastic Koopman operators associated to the linear RDS.

\begin{propos}\label{thm2}
	If $\mathbf{A}:\Omega\rightarrow\mathbb{R}^{d \times d}$ and $\mathbf{A}\in L^{1}(\Omega,\mathcal{F},{P})$, then RDE
	\begin{equation}\label{eq:LRDE}
	\dot{\x}=\mathbf{A}(\theta(t)\omega)\x,
	\end{equation}
	generates a linear RDS $\mathbf{\Phi}$ satisfying
	\begin{equation}\label{eq:linRDES}
	\mathbf{\Phi}(t,\omega)=\mathbf{I} + \int_{0}^{t}\mathbf{A}(\theta({s})\omega)\mathbf{\Phi}(s,\omega)ds.
	\end{equation}
	Assume that $\hat{\mathbf{\Phi}}(t) = \mathbb{E} [\mathbf{\Phi}(t,\omega)]$ is diagonalizable, with simple eigenvalues $\hat{\mu}_j^{t}$ and left eigenvectors $ \hat{\bw}^{t}_j$, $j=1,\ldots,d$.
	Then
	\begin{equation}\label{eq:linRDES_ef}
	\phi^{t}_j(\x)=\langle \x,  \hat{\bw}^{t}_j \rangle,  \ \ j=1,\ldots,d,
	\end{equation}
	are the principal eigenfunctions of the stochastic Koopman operator $\mathcal{K}^{t}$ with corresponding principal eigenvalues $\lambda_j^{S}(t)=\hat{\mu}_j(t)$, $j=1,\ldots,d$. 
	\\
	Moreover, if matrices $\A(\omega)$ commute and are diagonalizable with the simple eigenvalues $\lambda_j(\omega)$ and corresponding left eigenvectors $\bw_j, \ j=1,\ldots,d$, then 
	$$\hat{\bw}^{t}_j=\bw_j \quad \textrm{   and    }  \quad {\lambda_j^{S}(t)}=\mathbb{E} \left[{\rm e}^{\int_{0}^{t}\lambda_j(\theta(s)\omega)ds}\right].$$
	{
		Furthermore, $\hat{\bv}_{j}^{t}$, $j=1,\ldots,d$ are the Koopman modes of the full-state observable and the following expansion is valid
		\begin{equation}\label{eq:KMD_RDS_cont}
		\mathcal{K}^{t}\x=\displaystyle{\sum_{j=1}^d {\lambda_{j}^{S}(t)} \langle \x, \hat{\bw}_j^{t} \rangle   \hat{\bv}_j^{t}}.
		\end{equation}	
	}
\end{propos}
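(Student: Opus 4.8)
The plan is to mirror the structure of the proof of Proposition \ref{LDRDSthm}, replacing the product of matrices by the fundamental solution of the linear RDE. First I would record that, since equation (\ref{eq:LRDE}) is linear and $\A \in L^{1}(\Omega,\mathcal{F},P)$, classical RDE theory (see \cite[Section~2.2]{Arnold}) guarantees that the cocycle acts linearly, $\varphi(t,\omega)\x = \mathbf{\Phi}(t,\omega)\x$, where the fundamental matrix $\mathbf{\Phi}(t,\omega)$ solves the integral equation (\ref{eq:linRDES}); integrability of $\A$ ensures $\mathbf{\Phi}(t,\cdot)\in L^{1}$, so that $\hat{\mathbf{\Phi}}(t)=\mathbb{E}[\mathbf{\Phi}(t,\omega)]$ is well defined.

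For the eigenfunction claim I would compute the action of $\mathcal{K}^{t}$ on $\phi^{t}_j(\x)=\langle \x, \hat{\bw}^{t}_j \rangle$ exactly as in the discrete case:
\begin{align}
\mathcal{K}^{t} \phi^{t}_j(\x) &= \mathbb{E}[\langle \mathbf{\Phi}(t,\omega)\x, \hat{\bw}^{t}_j \rangle] = \langle \x, \mathbb{E}[\mathbf{\Phi}(t,\omega)^{*}]\,\hat{\bw}^{t}_j \rangle \nonumber \\
&= \langle \x, \hat{\mathbf{\Phi}}(t)^{*} \hat{\bw}^{t}_j \rangle = \hat{\mu}_j(t)\,\langle \x, \hat{\bw}^{t}_j \rangle, \nonumber
\end{align}
using $\mathbb{E}[\mathbf{\Phi}^{*}]=(\mathbb{E}[\mathbf{\Phi}])^{*}$ and that $\hat{\bw}^{t}_j$ is a left eigenvector of $\hat{\mathbf{\Phi}}(t)$. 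This identifies the principal eigenfunctions and the eigenvalues $\lambda^{S}_j(t)=\hat{\mu}_j(t)$.

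For the commuting case, the key observation is that if all $\A(\omega)$ commute then so do $\A(\theta(s)\omega)$ and $\A(\theta(r)\omega)$ for any $s,r$ (each is just $\A$ evaluated at a point of $\Omega$), so the nonautonomous linear system can be integrated explicitly as $\mathbf{\Phi}(t,\omega)=\exp\bigl(\int_{0}^{t}\A(\theta(s)\omega)\,ds\bigr)$. Simultaneous diagonalizability, $\A(\omega)=\V\boldsymbol{\Lambda}(\omega)\V^{-1}$ with $\W^{*}=\V^{-1}$ (as established in Proposition \ref{LDRDSthm}), then gives $\mathbf{\Phi}(t,\omega)=\V\,\mbox{diag}\bigl(e^{\int_{0}^{t}\lambda_j(\theta(s)\omega)\,ds}\bigr)\,\W^{*}$. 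Taking expectations yields $\hat{\mathbf{\Phi}}(t)=\V\,\mathbb{E}\bigl[\mbox{diag}\bigl(e^{\int_{0}^{t}\lambda_j(\theta(s)\omega)\,ds}\bigr)\bigr]\,\W^{*}$, from which $\hat{\bw}_j=\bw_j$ and $\lambda^{S}_j(t)=\mathbb{E}\bigl[e^{\int_{0}^{t}\lambda_j(\theta(s)\omega)\,ds}\bigr]$ follow immediately. Finally, (\ref{eq:KMD_RDS_cont}) is obtained verbatim from the discrete argument: expand $\x=\sum_j \langle \x, \hat{\bw}^{t}_j \rangle \hat{\bv}^{t}_j$ in the biorthogonal eigenbasis and apply linearity of $\mathcal{K}^{t}$ together with the eigenrelation.

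The step requiring the most care is the explicit integration in the commuting case: one must justify that commutativity of the family $\{\A(\theta(s)\omega)\}_{s}$ at distinct times reduces the time-ordered (Peano--Baker) series for $\mathbf{\Phi}(t,\omega)$ to the plain matrix exponential of the integral, and that the interchange of expectation with the matrix exponential and the spatial inner product is licensed by the $L^{1}$ hypothesis on $\A$. The remaining manipulations are the same linear-algebra and linearity-of-expectation steps already used in Proposition \ref{LDRDSthm}.
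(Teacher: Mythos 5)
Your proposal is correct and follows essentially the same route as the paper's own proof: the same adjoint computation $\mathbb{E}[\langle \mathbf{\Phi}(t,\omega)\x,\hat{\bw}^{t}_j\rangle]=\langle \x,\hat{\mathbf{\Phi}}(t)^{*}\hat{\bw}^{t}_j\rangle$ for the eigenfunction claim, the same simultaneous-diagonalization and matrix-exponential argument in the commuting case, and the same biorthogonal expansion for (\ref{eq:KMD_RDS_cont}). Your added remark about justifying the collapse of the Peano--Baker series to a plain exponential under commutativity is a point the paper glosses over with ``it is straightforward,'' so it is a welcome refinement rather than a deviation.
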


\begin{proof}
	The first part of the proposition follows from \cite[Example 2.2.8]{Arnold}, 
	
	Furthermore, the action of the stochastic Koopman operator on functions defined by (\ref{eq:linRDES_ef}) is equal to
	\begin{align}
	\mathcal{K}^{t} \phi^{t}_j( \x) 
	&=\mathbb{E}[\langle \mathbf{\Phi}(t,\omega)\x, \hat{\bw}^{t}_j \rangle] 
	=\mathbb{E}[\langle \x, \mathbf{\Phi}^*(t,\omega) \hat{\bw}^{t}_j \rangle] 
	=\langle \x, \mathbb{E} [\mathbf{\Phi}^*(t,\omega)]\hat{\bw}^{t}_j \rangle \nonumber \\
	&= \langle \x, \hat{\mathbf{\Phi}}^{*}(t) \hat{\bw}^{t}_j \rangle 
	= \hat{\mu }^{t}_j \langle \x, \hat{\bw}^{t}_j \rangle 
	= \hat{\mu }^{t}_j \phi^{t}_j( \x)
	\end{align}
	With the same argument as in the proof of Proposition \ref{LDRDSthm}, we have $\A(\omega) = \V \boldsymbol{\Lambda}(\omega)\W^{*} $, where $\W$ and $\V$ are matrices of common left and right eigenvectors, and $\boldsymbol{\Lambda}(\omega) = \mbox{diag}(\lambda_{1}(\omega),\ldots,\lambda_{d}(\omega))$. It is straightforward that  
	$$\mathbf{\Phi}(t,\omega)= \mbox{e}^{\int_{0}^{t}\A(\theta(s)\omega)ds}= \V \mbox{e}^{\int_{0}^{t}\mathbf{\Lambda}(\theta(s)\omega)ds} \W^{*},$$
	and
	$$\hat{\mathbf{\Phi}}(t)= \V \mathbb{E} \left[\mbox{e}^{\int_{0}^{t}\mathbf{\Lambda}(\theta(s)\omega)ds} \right] \W^{*}.$$
	We easily conclude that $\hat{\bw}_j=\bw_j$ and ${\lambda^{S}_j(t)}=\mathbb{E} \left[\mbox{e}^{\int_{0}^{t}\lambda_j(\theta(s)\omega)ds}\right]$.
	{ The proof of (\ref{eq:KMD_RDS_cont}) is same as the proof of (\ref{eq:KMD_RDS}) in Proposition \ref{thm2}.}
\end{proof}

\begin{exa}[Linear scalar RDS]
	{Suppose that linear scalar RDE is given by
		\begin{equation}\label{eq:rls-1d}
		\dot{x} = a(\omega) x, 
		\end{equation} 
		where $a :\Omega \rightarrow \mathbb{R}$ is random variable with finite moments. Observe that $\theta(t) = id$, which means that the probability space does not change with time.} 
	If the moment generating function defined by $\displaystyle \mathcal{M}_{a}(t)=\mathbb{E}[\mbox{e}^{t a(\omega)}]$ is analytic for $|t| < R$, then for the initial condition $\varphi(0,\omega) x  = x$ and $t < R$ there exists a unique solution of (\ref{eq:rls-1d}), which can be expressed as \cite{Strand}
	\begin{equation}\label{eq:rls-sol}
	\varphi(t,\omega) x  = x \mbox{e}^{\int_{0}^{t}a(\omega) ds}.
	\end{equation} 
	
	The action of the stochastic Koopman operator on the full state observable function $\phi(x)=x$ is then 
	\begin{equation}\label{eq:rls-Koopman}
	\mathcal{K}^{t} x = \mathbb{E} \left[ \varphi(t,\omega)x \right]  = \mathbb{E} \left[ x \mbox{e}^{\int_{0}^{t} a(\omega)) ds} \right] = \mathbb{E} \left[ \mbox{e}^{a(\omega) t} \right] \, x.
	\end{equation}	
	Thus, $\phi(x) = x$ is the eigenfunction of the stochastic  Koopman operator and the corresponding eigenvalue satisfies 
	$${\lambda^{S}(t)} =  \mathbb{E} \left[ \mbox{e}^{a(\omega) t} \right] = \mathcal{M}_{a}(t).$$
\end{exa}

\subsection{Stochastic Koopman operator for RDS generated by the stochastic differential equations}\label{subs:SDENonaut}
Let $\mathbb{T}=\mathbb{R}^{+}$, $M = \mathbb{R}^{d}$ and $T \in \mathbb{T}$. Suppose that the stochastic process $X_t(\omega)$, $t \in [0,T]$, $\omega \in \Omega$ is obtained as a solution of the nonautonomous stochastic differential equation (SDE)
\begin{equation}\label{eq:SDE_Nonaut}
dX_t = G(t, X_t) dt + {\sigma}(t, X_t) dW_t,
\end{equation} 
where $G : [0,T] \times \mathbb{R}^{d} \rightarrow \mathbb{R}^{d}$ and ${\sigma} : [0,T] \times \mathbb{R}^{d} \rightarrow \mathbb{R}^{d \times r}$ are $L^{2}$ measurable. 
Here $W_t=(W_t^{1},\ldots,W_t^{r})$ denotes the $r$-dimensional Wiener process with independent components and standard properties, i.e., $\mathbb{E}(W_t^{i}) = 0$, $i=1,\ldots,r$,  $\mathbb{E}(W_t^{i} W_{s}^{j}) = \min\{t,s\} \delta_{ij}$, $i,j = 1,\ldots,r$ ($\delta_{ij}$ is the Kronecker delta symbol).  
The solution $X_t(\omega)$ of (\ref{eq:SDE}) with the initial condition $X_{t_0}(\omega)$ is formally defined in terms of It\^{o} integral as {(see \cite[Chapter 6]{ArnoldSDE})}
\begin{equation}\label{eq:SDE_xw}
X_t(\omega) = X_{t_0}(\omega) + \int_{t_0}^{t} G(s, X_s(\omega)) ds + \int_{t_0}^{t} {\sigma}(s, X_s(\omega)) dW_s.
\end{equation}
The probability space on which the process is considered can be identified with $\Omega = \mathcal{C}_0(\mathbb{R}^{+}, \mathbb{R}^{r})$ (space of continuous functions satisfying $\omega(t_0)=0$). Then, $\omega \in \Omega$ is identified with the canonical realization of the Wiener process such that $\omega(t)=W_t(\omega)$. If $\mathcal{F}$ is the Borel $\sigma$-algebra, $P$ the measure generated by the Wienner process, and $\theta(t)$ defined by the "Wiener shifts"
\begin{equation}\label{eq:WienerShift}
\theta(t)\omega(\cdot) = \omega(t+\cdot) - \omega(t),
\end{equation}
$(\Omega,\mathcal{F},P,(\theta(t))_{t \in \mathbb{T}})$ becomes a metric dynamical system {(see \cite[Appendix~A]{Arnold}).} 
It is a driving dynamical system for the two parameter family of RDS $\varphi({t, t_0, \omega})$ that is given by
$$\varphi({t, t_0, \omega}) \x =X_t(\omega),$$
where $X_t(\omega)$ is given with (\ref{eq:SDE_xw}) for the initial condition $X_{t_0}(\omega)=\x$.
{ Under certain regularity and boundedness properties of the functions $G$ and $\sigma$. The basic existence and uniqueness results for the SDE of the form (\ref{eq:SDE_Nonaut}) can also be found in \cite[Section~6.3]{ArnoldSDE}, \cite[Section 3.3]{Pavliotis}.}

The Koopman operator family $\mathcal{K}^{t,t_0}$ related to this RDS is defined by \begin{equation}\label{eq:KoopmanStochasticGeneral_NASDE}
\mathcal{K}^{t,t_0} f(\x)=\mathbb{E} [f (\varphi(t, t_0, \omega) \x )].
\end{equation}
In this more general setting with the two-parameter family of Koopman operators  (\ref{eq:KoopmanStochasticGeneral_NASDE}), the eigenfunctions $\phi^{t,t_0} : M \rightarrow \mathbb{C}$ and eigenvalues $\lambda^{S}(t,t_0)$ of the Koopman operator $\mathcal{K}^{t,t_0}$  defined on a finite-time interval satisfy
\begin{equation} \label{KoopmanStochasticContinEigen_NASDE}
\mathcal{K}^{t,t_0} \phi^{t,t_0}(\x)= \lambda^{S}(t,t_0) \phi^{t,t_0}(\x). 
\end{equation}
\\
The following two propositions treat two classes of linear SDE. In the first one the random part of equations models the additive noise and in the second one it models the multiplicative noise. 

\begin{propos}\label{thm3}
	Let the linear SDE with additive noise be defined by
	\begin{equation}\label{eq:LSDE}
	dX_t =\mathbf{A}(t) X_t dt + \sum_{i=1}^{r} b^{i}(t) dW_t^{i},
	\end{equation}
	where $\mathbf{A}(t)$ is $d \times d$ matrix of functions and $b^{i}(t)$, $i=1,\ldots,m$ are $d$-dimensional vector functions.
	Assume that the fundamental matrix $\mathbf{\Phi}(t,t_0)$ satisfying the matrix  differential equation 
	\begin{equation}\label{eq:linSDES}
	\dot{\mathbf{\Phi}}= \mathbf{A}(t) \mathbf{\Phi},\quad \mathbf{\Phi}(t_0)={\bf I}
	\end{equation}
	is diagonalizable, with simple eigenvalues $\hat{\mu}_j^{t,t_0}$ and left eigenvectors $ \hat{\bw}^{t,t_0}_j$, $j=1,\ldots,d$.
	Then 
	\begin{equation}\label{eq:linSDES_ef}
	\phi^{t,t_0}_j(\x)=\langle \x, \hat{\bw}^{t,t_0}_j \rangle,  \ \ j=1,\ldots,d,
	\end{equation}
	are the eigenfunctions of the stochastic Koopman operator $\mathcal{K}^{t,t_0}$, with corresponding eigenvalues $$\lambda_j^{S}(t,t_0)=\hat{\mu}_j(t,t_0).$$ 
	If matrices $\A(t)$ commute and are diagonalizable with the simple eigenvalues $\lambda_j(t)$ and corresponding left eigenvectors $\bw_j, \ j=1,\ldots,d$, then 
	\begin{equation}\label{eq:LSDE_Aut_eigens}
	\hat{\bw}^{t,t_0}_j=\bw_j \quad \textrm{   and    } \quad \lambda_j^{S}(t,t_0)= \mbox{e}^{\int_{t_0}^{t}\lambda_j(s) ds}.
	\end{equation}
\end{propos}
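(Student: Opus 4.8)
The plan is to exploit the explicit variation-of-constants (Duhamel) representation of the solution of the linear SDE (\ref{eq:LSDE}), which separates the deterministic evolution of the initial state from the stochastically forced term. Writing the solution with initial condition $X_{t_0}(\omega) = \x$ as
\begin{equation}
X_t(\omega) = \mathbf{\Phi}(t,t_0)\x + \int_{t_0}^{t} \mathbf{\Phi}(t,s) \sum_{i=1}^{r} b^{i}(s) \, dW_s^{i},
\end{equation}
the crucial structural feature of the additive-noise case is that the integrand $\mathbf{\Phi}(t,s) b^{i}(s)$ is deterministic. Consequently the stochastic forcing term is an It\^{o} integral of a deterministic integrand, which (by the $L^2$-measurability hypotheses placed on $\mathbf{A}$ and $b^i$, so that the integral is well defined) has zero expectation.

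First I would apply the Koopman operator to the candidate eigenfunction $\phi_j^{t,t_0}(\x) = \langle \x, \hat{\bw}_j^{t,t_0} \rangle$, use the linearity of the inner product in its first argument together with the linearity of the expectation, and split into the deterministic and stochastic parts:
\begin{align}
\mathcal{K}^{t,t_0} \phi_j^{t,t_0}(\x) &= \mathbb{E}\big[\langle X_t(\omega), \hat{\bw}_j^{t,t_0} \rangle\big] \nonumber \\
&= \langle \mathbf{\Phi}(t,t_0)\x, \hat{\bw}_j^{t,t_0} \rangle + \mathbb{E}\Big[\Big\langle \int_{t_0}^{t} \mathbf{\Phi}(t,s) \sum_{i=1}^{r} b^{i}(s) \, dW_s^{i}, \hat{\bw}_j^{t,t_0} \Big\rangle\Big].
\end{align}
The second summand vanishes by the mean-zero property of the It\^{o} integral noted above. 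For the first, the adjoint relation $\langle \mathbf{\Phi}(t,t_0)\x, \hat{\bw}_j^{t,t_0}\rangle = \langle \x, \mathbf{\Phi}(t,t_0)^{*} \hat{\bw}_j^{t,t_0}\rangle$ together with the defining left-eigenvector identity $\mathbf{\Phi}(t,t_0)^{*} \hat{\bw}_j^{t,t_0} = \hat{\mu}_j^{t,t_0} \hat{\bw}_j^{t,t_0}$ delivers $\hat{\mu}_j^{t,t_0} \phi_j^{t,t_0}(\x)$, exactly as in the proofs of Propositions \ref{LDRDSthm} and \ref{thm2}. This identifies $\phi_j^{t,t_0}$ as an eigenfunction with eigenvalue $\lambda_j^{S}(t,t_0) = \hat{\mu}_j(t,t_0)$.

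For the commuting, simultaneously diagonalizable case I would argue as in Proposition \ref{LDRDSthm}: writing $\mathbf{A}(t) = \V \boldsymbol{\Lambda}(t) \W^{*}$ with $\boldsymbol{\Lambda}(t) = \mbox{diag}(\lambda_1(t), \ldots, \lambda_d(t))$ and $\W^{*} = \V^{-1}$, commutativity of the family $\mathbf{A}(t)$ guarantees that each $\mathbf{A}(t)$ commutes with $\int_{t_0}^{t} \mathbf{A}(s)\,ds$, so that the solution of (\ref{eq:linSDES}) is the matrix exponential $\mathbf{\Phi}(t,t_0) = \mbox{e}^{\int_{t_0}^{t} \mathbf{A}(s)\,ds} = \V \, \mbox{e}^{\int_{t_0}^{t} \boldsymbol{\Lambda}(s)\,ds}\, \W^{*}$. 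Reading off the diagonal factor shows that the common left eigenvectors are preserved, $\hat{\bw}_j^{t,t_0} = \bw_j$, and that the eigenvalues are $\lambda_j^{S}(t,t_0) = \mbox{e}^{\int_{t_0}^{t} \lambda_j(s)\,ds}$, as claimed in (\ref{eq:LSDE_Aut_eigens}).

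The step requiring the most care is the vanishing of the stochastic term: it hinges specifically on the additivity of the noise, which is precisely what makes the It\^{o} integrand deterministic. This is the feature that fails in the multiplicative-noise setting treated in the following proposition, where the forcing depends on $X_s$ and its expectation is no longer zero; I therefore expect the bookkeeping around this cancellation to be the main conceptual (if not technical) point to get right.
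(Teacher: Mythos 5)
Your proposal is correct and follows essentially the same route as the paper: the Duhamel representation you write down is the paper's formula (\ref{eq:LSDE_Xt}) with the factor $\mathbf{\Phi}(t,t_0)$ distributed (since $\mathbf{\Phi}(t,s)=\mathbf{\Phi}(t,t_0)\mathbf{\Phi}^{-1}(s,t_0)$), the vanishing of the It\^{o} integral's expectation is exactly the step the paper justifies via the mean-zero property, and the commuting case is handled identically through $\mathbf{\Phi}(t,t_0)=\mbox{e}^{\int_{t_0}^{t}\A(s)\,ds}$. Your closing remark correctly identifies why this argument is specific to additive noise and does not carry over to Proposition \ref{thm4}.
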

\begin{proof}
	Since the solution of (\ref{eq:LSDE}) with the initial condition $X_{t_0}(\omega)=\x$ is given by, {(see \cite[Section~8.2]{ArnoldSDE})}
	\begin{equation} \label{eq:LSDE_Xt}
	X_t(\omega)  = \mathbf{\Phi}(t,t_0) \left(\x + \sum_{i=1}^{r} \int_{t_0}^{t} \mathbf{\Phi}^{-1}(s,t_0) b^{i}(s) dW_s^{i} \right),
	\end{equation}
	we have
	\begin{align}
	& \mathcal{K}^{t,t_0} \phi^{t,t_0}_j(\x)  = \mathbb{E} \left[ \phi^{t,t_0}_j (X_t(\omega)) \right] \nonumber \\
	& = \mathbb{E} \left[ \big\langle \mathbf{\Phi}(t,t_0) \x,  \hat{\bw}^{t,t_0}_j \big\rangle +  \big\langle \sum_{i=1}^{r} \int_{t_0}^{t} \mathbf{\Phi}(t,t_0) \mathbf{\Phi}^{-1}(s,t_0) b^{i}(s) dW_s^{i}, \hat{\bw}^{t,t_0}_j \big\rangle \right] \nonumber \\
	& = \big\langle \mathbf{\Phi}(t,t_0)\x,  \hat{\bw}^{t,t_0}_j \big\rangle + \sum_{i=1}^{r} \big\langle \mathbb{E} \left[   \int_{t_0}^{t} \mathbf{\Phi}(t,t_0) \mathbf{\Phi}^{-1}(s,t_0) b^{i}(s) dW_s^{i} \right], \hat{\bw}^{t,t_0}_j  \big\rangle
	\nonumber \\
	& = \hat{\mu}_j^{t,t_0} \langle \x,  \hat{\bw}^{t,t_0}_j \rangle= \hat{\mu}_j^{t,t_0} \phi^{S,t,t_0}_j(\x), 
	\end{align}
	where we used that { $\displaystyle \mathbb{E} \left[   \int_{t_0}^{t} {\bf F}(s) dW_s \right] = 0$ (see \cite[Theorem~4.4.14]{ArnoldSDE}) applied to ${\bf F}(s) = \mathbf{\Phi}(t,t_0) \mathbf{\Phi}^{-1}(s,t_0) b^{i}(s)$}. With this we proved the first statement. 
	
	Since in the commutative case the fundamental matrix can be expressed in the form 
	$\mathbf{\Phi}(t,t_0) = \mbox{e}^{\int_{t_0}^{t} \A(s) ds}$, its eigenvectors coincide with the eigenvectors of the matrix $\A(t)$ and eigenvalues are given by (\ref{eq:LSDE_Aut_eigens}). 
\end{proof}

\begin{propos}\label{thm4}
	Let the linear SDE with multiplicative noise be defined by
	\begin{equation}\label{eq:LSDE_2}
	dX_t =\mathbf{A}(t) X_t  dt + \sum_{i=1}^{r} \B^{i}(t) X_t \,dW_t^{i},
	\end{equation}
	where $\mathbf{A}(t)$, $\B^{i}(t)$, $i=1,\ldots,r$ are $d \times d$ matrices of functions. 
	Denote with $\mathbf{\Phi}(t,t_0)$ the fundamental matrix satisfying the matrix SDE
	\begin{equation}\label{eq:linSDES_2}
	d \mathbf{\Phi} = \mathbf{A} \mathbf{\Phi} \, dt  + \sum_{i=1}^{r}\B^{i}(t) \mathbf{\Phi} \, dW_t^{i},\quad \mathbf{\Phi}(t_0)={\bf I}
	\end{equation}
	and assume that $\hat{\mathbf{\Phi}}(t,t_0) = \mathbb{E} \left[\mathbf{\Phi}(t,t_0) \right]$ is diagonalizable, with simple eigenvalues $\hat{\mu}_j^{t,t_0}$ and left eigenvectors 
	$\hat{\bw}^{t,t_0}_j$, $j=1,\ldots,d$.
	Then
	\begin{equation}\label{eq:linSDES_ef1}
	\phi^{t,t_0}_j(\x)=\langle \x,  \hat{\bw}^{t,t_0}_j \rangle,  \ \ j=1,\ldots,d,
	\end{equation}
	are the eigenfunctions of the stochastic Koopman operator $\mathcal{K}^{t,t_0}$, with corresponding eigenvalues 
	$$\lambda_j^{S}(t,t_0)=\hat{\mu}_j(t,t_0).$$ 
	If the matrices $\A(t), \B^{i}(t)$, $i=1,\ldots,r$ commute, i.e.~if $\A(t) \A(s)= \A(s) \A(t)$, $\A(t) \B^{i}(s) = \B^{i}(s)  \A(t), \B^{i}(t) \B^{j}(s) = \B^{j}(s) \B^{i}(t)$ for $i,j=1,\ldots,r$ and all $s, t$, and if the matrices $\A(t)$ are diagonalizable with the simple eigenvalues  $\lambda_j(t)$ and corresponding left eigenvectors $\bw_j, \ j=1,\ldots,d$, then 
	\begin{equation}\label{eq:LSDE_Aut_eigens2}
	\hat{\bw}^{t,t_0}_j=\bw_j \quad \textrm{   and    } \quad \lambda_j^{S}(t,t_0)= \mbox{e}^{\int_{t_0}^{t}\lambda_j(s) ds}.
	\end{equation}
\end{propos}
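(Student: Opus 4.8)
The plan is to handle the general statement and the commutative refinement separately. For the first part I would reuse verbatim the template of Propositions \ref{LDRDSthm}--\ref{thm3}. The decisive structural feature is that the noise here is \emph{multiplicative} and the equation homogeneous, so the solution with initial condition $X_{t_0}(\omega)=\x$ is simply $X_t(\omega)=\mathbf{\Phi}(t,t_0)\,\x$, with no additive It\^o integral to dispose of. The computation then reads
\begin{align*}
\mathcal{K}^{t,t_0}\phi^{t,t_0}_j(\x)
&=\mathbb{E}\big[\langle \mathbf{\Phi}(t,t_0)\x,\hat{\bw}^{t,t_0}_j\rangle\big]
=\big\langle \x,\mathbb{E}[\mathbf{\Phi}^*(t,t_0)]\,\hat{\bw}^{t,t_0}_j\big\rangle \\
&=\langle \x,\hat{\mathbf{\Phi}}^*(t,t_0)\hat{\bw}^{t,t_0}_j\rangle
=\hat{\mu}_j^{t,t_0}\,\phi^{t,t_0}_j(\x),
\end{align*}
using $\mathbb{E}[\mathbf{\Phi}^*]=(\mathbb{E}[\mathbf{\Phi}])^*=\hat{\mathbf{\Phi}}^*$ and the left-eigenvector property of $\hat{\bw}^{t,t_0}_j$. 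This settles (\ref{eq:linSDES_ef1}).

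For the commutative refinement I would first record an observation that upgrades the hypotheses. Since each $\A(t)$ has \emph{simple} eigenvalues and every $\B^{i}(s)$ commutes with every $\A(t)$, each $\B^{i}(s)$ preserves the one-dimensional eigenspaces of $\A(t)$ and is therefore diagonalized by the same frame. Thus $\A(t)=\V\boldsymbol{\Lambda}(t)\W^{*}$ and $\B^{i}(t)=\V\mathbf{M}^{i}(t)\W^{*}$ with $\boldsymbol{\Lambda}(t)=\mbox{diag}(\lambda_1(t),\ldots,\lambda_d(t))$ and $\mathbf{M}^{i}(t)=\mbox{diag}(\beta^{i}_1(t),\ldots,\beta^{i}_d(t))$ sharing the fixed $\V,\W$. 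With all matrices simultaneously diagonal, the matrix SDE (\ref{eq:linSDES_2}) decouples and the commutative It\^o exponential solves it:
$$\mathbf{\Phi}(t,t_0)=\V\,\mbox{e}^{\mathbf{C}(t)}\,\W^{*},\qquad \mathbf{C}(t)=\int_{t_0}^{t}\boldsymbol{\Lambda}(s)\,ds-\frac{1}{2}\sum_{i=1}^{r}\int_{t_0}^{t}(\mathbf{M}^{i}(s))^{2}\,ds+\sum_{i=1}^{r}\int_{t_0}^{t}\mathbf{M}^{i}(s)\,dW_s^{i}.$$
I would verify this by It\^o's formula: the quadratic-variation term $\tfrac12(d\mathbf{C})^2=\tfrac12\sum_i(\mathbf{M}^i)^2\,dt$ exactly cancels the $-\tfrac12\sum_i(\mathbf{M}^i)^2\,dt$ carried inside $\mathbf{C}$, recovering (\ref{eq:linSDES_2}); commutativity is precisely what reduces this matrix It\^o computation to the scalar one and what lets $\mbox{e}^{\mathbf{C}}$ be pulled past $\A$ and $\B^i$.

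Because $\mathbf{C}(t)$ is diagonal in the frame $\V,\W$, so is $\mbox{e}^{\mathbf{C}(t)}$, and expectation preserves this structure; hence $\hat{\mathbf{\Phi}}(t,t_0)$ has the left eigenvectors of $\A(t)$, giving $\hat{\bw}_j=\bw_j$. For the eigenvalue I would read off the $j$-th diagonal entry of $\mbox{e}^{\mathbf{C}(t)}$ and compute its expectation as a log-normal moment: the centered Gaussian $\sum_i\int_{t_0}^t\beta^i_j(s)\,dW_s^i$ has variance $\sum_i\int_{t_0}^t(\beta^i_j(s))^2\,ds$, so its exponential has mean $\exp\big(\tfrac12\sum_i\int_{t_0}^t(\beta^i_j(s))^2\,ds\big)$. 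This factor cancels the deterministic $-\tfrac12\sum_i\int(\beta^i_j)^2$ already present in $\mathbf{C}(t)$, leaving
$$\lambda^{S}_j(t,t_0)=\mathbb{E}\big[\big(\mbox{e}^{\mathbf{C}(t)}\big)_{jj}\big]=\mbox{e}^{\int_{t_0}^{t}\lambda_j(s)\,ds},$$
which is (\ref{eq:LSDE_Aut_eigens2}).

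The hard part will be the second statement, and specifically two linked points: establishing the closed-form exponential solution of the matrix SDE and tracking the It\^o correction through it. A computation that ignored the $-\tfrac12\sum_i(\B^i)^2$ drift (as one might by analogy with deterministic or Stratonovich calculus) would produce the wrong fundamental matrix, and the collapse of $\lambda^S_j$ to the noise-free expression $\mbox{e}^{\int\lambda_j}$ hinges entirely on the exact cancellation between that It\^o correction and the log-normal variance correction. Making the matrix-exponential argument rigorous is where the full list of commutativity hypotheses on $\A(t),\B^i(t)$ is genuinely used.
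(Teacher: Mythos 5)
Your proposal is correct and follows essentially the same route as the paper's proof: the first part is the identical computation $\mathcal{K}^{t,t_0}\phi_j^{t,t_0}(\x)=\langle\hat{\mathbf{\Phi}}(t,t_0)\x,\hat{\bw}_j^{t,t_0}\rangle$, and the commutative case rests on the same closed-form It\^o exponential for $\mathbf{\Phi}(t,t_0)$ whose $-\frac{1}{2}\sum_i(\B^i)^2$ correction is cancelled upon taking expectations. The only difference is one of self-containedness: where the paper cites the exponential formula and the identity $\mathbb{E}\bigl[\mbox{e}^{\int\B\,dW_s}\bigr]=\mbox{e}^{\frac{1}{2}\int\B\B^{T}ds}$ from the literature, you derive both by first simultaneously diagonalizing $\A(t)$ and the $\B^i(t)$ (correctly using simplicity of the spectrum of $\A$) and then reducing to scalar log-normal moments, which is a legitimate and arguably cleaner verification of the same steps.
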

\begin{proof}
	For the fundamental matrix $\mathbf{\Phi}(t,t_0)$ and the initial condition $X_{t_0}(\omega)=\x$,  the solution of (\ref{eq:LSDE_2}) is equal to, { (see \cite[Section~8.5]{ArnoldSDE})}
	\begin{equation} \label{eq:LSDE_Xt_2}
	X_t(\omega) = \mathbf{\Phi}(t,t_0) \x,
	\end{equation}
	thus
	\begin{align}
	& \mathcal{K}^{t,t_0} \phi^{t,t_0}_j(\x) = \mathbb{E} 
	\left[ \phi^{t,t_0}_j (X_t(\omega)) \right] = \mathbb{E} 
	\left[ \big\langle \mathbf{\Phi}(t,t_0) \x,  \hat{\bw}^{t,t_0}_j \big\rangle \right] \nonumber \\
	& = \big\langle \hat{\mathbf{\Phi}}(t,t_0)\x,  \hat{\bw}^{t,t_0}_j \big\rangle = {\mu}_j^{t,t_0} \langle \x,  \hat{\bw}^{t,t_0}_j \rangle= e^{\hat{\lambda}_j(t,t_0)} \phi^{t,t_0}_j(\x), 
	\end{align}
	For the case with commutative matrices $\A(t)$ and $\B^{i}(t)$, $i=1,\ldots,r$, the fundamental matrix $\mathbf{\Phi}(t,t_0)$ can be expressed in an explicit form as, {(see \cite[Sections~8.4, 8.5]{ArnoldSDE})}
	\begin{equation}\label{eq:linSDE_Phi_2}
	\mathbf{\Phi}(t,t_0)= \mbox{e}^{{\int_{t_0}^{t}{\big(\A(s) - \frac{1}{2} \sum_{i=1}^{r} \B^{i}(s) \B^{i}(s)^{T} \big) ds} + \int_{t_0}^{t}{ \sum_{i=1}^{r} \B^{i}(s) dW_s^{i}}} }.
	\end{equation}
	Since
	\begin{align}\label{eq:linSDE_Phi_2Exp}
	\hat{\mathbf{\Phi}}(t,t_0) &= \mathbb{E} \left[{\mathbf{\Phi}}(t,t_0)\right] 
	=\mbox{e}^{ {\int_{t_0}^{t}{\big(\A(s) - \frac{1}{2} \sum_{i=1}^{r} \B^{i}(s) \B^{i}(s)^{T} \big) ds}} } \  \mathbb{E} \bigg[ \mbox{e}^{ \int_{t_0}^{t}
		{\sum_{i=1}^{r} \B^{i}(s) dW_s^{i}} }\bigg] \nonumber \\
	& = \mbox{e}^{{\int_{t_0}^{t}{\big(\A(s) - \frac{1}{2} \sum_{i=1}^{r} \B^{i}(s) \B^{i}(s)^{T} \big) ds}} }  \ \mbox{e}^{\int_{t_0}^{t}
		{\frac{1}{2} \sum_{i=1}^{r} \B^{i}(s) \B^{i}(s)^{T} ds} }
	= \mbox{e}^{{\int_{t_0}^{t}{\A(s) ds}} },
	\end{align}
	the eigenvectors of $\hat{\mathbf{\Phi}}(t,t_0)$ coincide with the eigenvectors of the matrix $\A(t)$ and eigenvalues are given by (\ref{eq:LSDE_Aut_eigens2}), which proves the statement. Here we used the fact that 
	$\mathbb{E} \left[\mbox{e}^{\int_{t_0}^{t} \B(s) dW_s}\right] =\mbox{e}^{\frac{1}{2} \int_{t_0}^{t} \B(s) \B(s)^{T} ds}$ { (see \cite[Theorem~8.4.5]{ArnoldSDE}).}
\end{proof}
\vspace{10pt}

\section{Semigroups of Koopman operators and their generators}\label{sec:semigroup}
In this section we are interested in RDS for which the associated stochastic Koopman operator family satisfy the semigroup property.  
{
	In what follows, we limit our considerations to the situation where $M \subseteq \mathbb{R}^{d}$ (or more generally $M$ could be a Polish space) and $\mathcal{B}$ is the Borel $\sigma$-algebra of Borel sets in $M$, so that $(M;\mathcal{B})$ is a standard space. For a fixed $\x \in M$, $(\varphi(t,\omega)\x)_{t\in \mathbb{T},\omega \in \Omega}$ is the $M$-valued stochastic process, which implies that an RDS defines a family of random processes with values in $M$. For each $\x \in M$, the corresponding probability measure $\mathbb{P}_\x$ on the canonical space $\bar{\Omega} \subseteq M^{\mathbb{T}}$ is induced by the probability measure ${P}$ associated with the driving dynamical system and a process $(\varphi(t,\omega)\x)_{t\in \mathbb{T},\omega \in \Omega}$ (see \cite{Dynkin}, \cite[Appendix A.2]{Arnold}).

	Strong results for the properties of the stochastic Koopman operator family can be obtained in more specific settings such as linear setting we analyzed before and Markov setting. Therefore, we limit here to the particular types of RDS and consider only those that are Markovian \cite{Crauel_MarkovRDS}, or even more precisely, to those RDS for which the family of processes $\{(\varphi(t,\omega) \x), \x \in M\}$ is a time-homogeneous Markov family. Denote by $\mathcal{F}_{t}^{\vec{x}}$ the $\sigma$-algebra generated by the "past" of the stochastic process, i.e., $\mathcal{F}_{t}^{\vec{x}}=\sigma\{\varphi(s,\omega)\x, s\le t \}$.
	The Markov property implies that for $s \le t$ and every random variable $Y$, measurable with respect to $\mathcal{F}_{t}^{\vec{x}}$, the following relation holds
	\begin{equation}\label{eq:MarkovProperty}
	\mathbb{E}[Y | \mathcal{F}_{s}^{\vec{x}}] = \mathbb{E}[Y | \varphi(s,\omega)\vec{x}].
	\end{equation}
	Moreover, for such $Y$ the following equality, known as the Chapman-Kolmogorov equation, is valid 
	\begin{equation}\label{eq:ChKolmEq}
	\mathbb{E}\left[Y | \varphi(s,\omega)\vec{x} \right] = \mathbb{E}\left[ \mathbb{E}\big[Y | \varphi(t,\omega)\vec{x} \big] \big| \varphi(s,\omega)\vec{x} \right].
	\end{equation}
	The Chapman-Kolmogorov equation implies the semigroup property of the stochastic Koopman operators, i.e.,
	\begin{equation}\label{eq:Semigroup}
	\mathcal{K}^{t+s}=\mathcal{K}^{s}\circ\mathcal{K}^{t}.
	\end{equation}
	For the settings of Markov process, the Koopman operator is known under the name Markov propagator or transition operator and its properties have been studied for a long time \cite{Dynkin,Yosida}.  }

{
	There are two important and well known classes of RDS, which can be identified with the Markov family. These are the discrete-time RDS generated by an independent identically distributed process and the continuous-time RDS generated by the stochastic differential equations where the noise is modeled using Wiener process. In each of these cases the probability space associated with the stochastic process modeling the noise can be identified with the canonical measure-preserving dynamical system (\cite[Appendix A.2]{Arnold}). }

In what follows we describe briefly the canonical dynamical system $(\theta(t))_{t \in \mathbb{T}}$ that is induced by some given stochastic process $\tilde{\xi}$. { We will use this concept in the next subsection.} Let suppose that $\tilde{\xi} = (\tilde{\xi}_t)_{t \in \mathbb{T}}$, $\tilde{\xi}_t : \tilde{\Omega} \rightarrow B$ is a $B$ valued stochastic process on a probability space $\tilde{\Omega}$, where $(B, \mathcal{B})$ is a measurable state space. The given process and the probability measure on $\tilde{\Omega}$ induce a probability measure $P$ on $B^{\mathbb{T}}$, so that one can define a new probability space $(\Omega,\mathcal{F},P)=(B^{\mathbb{T}},\mathcal{B}^{\mathbb{T}},P)$ where $\mathcal{B}^{\mathbb{T}}$ is the $\sigma$-algebra generated by the collection of the cylinder sets. The canonical realization $\xi$ of the stochastic process $\tilde{\xi}$ is defined on $(\Omega,\mathcal{F},P)$ by the coordinate functions $\xi_t({\omega})={\omega}(t)$, $\omega \in \Omega$. 
The shift transformations $\theta(t) : B^{\mathbb{T}} \rightarrow B^{\mathbb{T}}$ given by
\begin{equation}\label{eq:shift}
\theta(t) \omega(\cdot) := \omega(t+\cdot), \quad t \in \mathbb{T},
\end{equation}
constitute the semigroup or group of measurable transformations.
Notice that the canonical realization $\xi_t({\omega})$ can be viewed as the composition of the shift transformation (\ref{eq:shift}) and of the canonical projection  $\pi : B^{\mathbb{T}} \rightarrow B$ defined by 
\begin{equation}\label{eq:CanonProjection}
\pi(\omega(\cdot)) = \omega(0),
\end{equation}
i.e., $\xi_t(\omega)=\omega(t)=\pi(\theta(t)\omega)$.

{If time $\mathbb{T}$ is discrete 
	($\mathbb{T}=\mathbb{Z}$ or $\mathbb{T}=\mathbb{Z}^{+}$) a map $(t,\omega) \rightarrow \theta(t)\omega$ is measurable and $(\theta(t))_{t \in \mathbb{T}}$ is a measurable dynamical system. In the continuous time case ($\mathbb{T}=\mathbb{R}$ or $\mathbb{T}=\mathbb{R}^{+}$), when $B=\mathbb{R}^{m}$ or when $B$ is a Polish space, $(\theta(t))_{t \in \mathbb{T}}$ could become a measurable dynamical system after some redefinition of the probability measure $P$ and of the $\sigma$-algebra set (see \cite{Arnold}, Appendix A.2).
}

\subsection{Discrete-time RDS}
Let now assume that for $\mathbb{T} = \mathbb{Z}^{+} \cup \{0\}$, { $\omega=(\omega_i)_{i\in\mathbb{T}}$ is a canonical realization of the stochastic process with the associated driving system composed by the  shift transformation maps (\ref{eq:shift}) as described in the previous paragraph. }
If we assume that the discrete RDS $\varphi(n,\omega)$ is defined by the one step map $T(\omega,\cdot) : M \rightarrow M$ of the form 
{
	\begin{equation}\label{eq:T_Markov}
	T(\omega,\cdot)=T_0(\pi(\omega),\cdot),
	\end{equation}
	where $\pi$ denotes the canonical projection (\ref{eq:CanonProjection}),
	by taking into account (\ref{eq:T_Markov}) in (\ref{eq:DRDS_Cocycle}), we get
	\begin{equation}\label{eq:DRDS_Cocycle_Markov}
	\varphi(n,\omega) = T_0(\pi(\psi^{n-1}(\omega)),\cdot) \circ \cdots \circ T_0(\pi(\psi(\omega)),\cdot) \circ T_0(\pi(\omega),\cdot), \ \ n \ge 1.
	\end{equation} 
	If $\omega$ is an i.i.d. stochastic process, the sequence  $\{T(\psi^{i}(\omega),\cdot)=T_0(\pi(\psi^{i-1}(\omega)),\cdot),\\ i \ge 1\}$ is an i.i.d. sequence of random maps, so that RDS (\ref{eq:DRDS_Cocycle_Markov}) generates the time-homogeneous Markov family $\{\varphi(n,\omega)\x, \x \in M\}$ \cite[Chapter 2.1]{Arnold}. 
	In this case the stochastic Koopman operator family is a semigroup, so that $\mathcal{K}^{n} = \mathcal{K}^{1} \circ \cdots \circ \mathcal{K}^{1}= (\mathcal{K}^{1})^{n}$, $n > 0$. Therefore one can think about $\mathcal{K}^{1}$ as the generator of the stochastic Koopman semigroup, and we denote it by $\mathcal{K}^{S}$. According to (\ref{eq:KoopmanStochasticGeneral}), $\mathcal{K}^{1}$ is determined by using the one step map $T(\omega,\cdot)$ as 
	\begin{equation}\label{eq:KoopmanStochasticDiscrete}
	\mathcal{K}^{S} f(\x)= \mathcal{K}^{1} f(\x) = \mathbb{E} \left[f (T(\omega,\x))\right].
	\end{equation}
	It follows from the semigroup property that if $\lambda^S$ is the eigenvalue of the stochastic Koopman generator with the associated eigenfunction $\phi(x)$, then $(\lambda^{S})^{n}$ and  $\phi(x)$ are the eigenvalue and the eigenfunction of the operator $\mathcal{K}^{n}$. }
{
	\begin{rem}
		Note that in the more general case presented in subsection \ref{subsec21:DiscreteRDS}, the random maps were identically distributed but not necessarily independent, so that the future state of the process obtained by the action of their composition (\ref{eq:DRDS_Cocycle}) could depend on the past behavior of the system and not only on the present state. 
	\end{rem} 
}

\color{black}
\begin{exa}[Noisy rotation on the circle]\label{nrot}
	We describe here the example considered in \cite{JungeMarsdenMezic}. A deterministic dynamical system representing the rotation on the unit circle $S^{1}$ is defined by
	\begin{equation}\label{eq:RotMap}
	T(x)=x+\vartheta,
	\end{equation}
	where $\vartheta \in S^{1}$ is constant number.
	We consider here its stochastic perturbation, i.e., { a discrete RDS over the dynamical system $\theta=(\theta(t))_{t \in \mathbb{T}}$, where $\theta(t)$ are shift transformations (\ref{eq:shift}),} defined by the one step map $T:\Omega \times S^{1} \rightarrow S^{1}$ of the form (\ref{eq:T_Markov}):
	\begin{equation}\label{eq:RotStochMap}
	T(\omega,x)=x+\vartheta+\pi(\omega).
	\end{equation}
	Here $\omega \in [-\delta/2,\delta/2]^{\mathbb{Z}}$ is a canonical realization of a stochastic process and $\pi(\cdot)$ is the canonical projection defined by (\ref{eq:CanonProjection}). We suppose that the coordinates $\omega_i$ are i.i.d. with uniform distribution on the interval $[-\delta/2,\delta/2]$ for some $\delta > 0$. 
	According to (\ref{eq:KoopmanStochasticDiscrete}), the action of the associated stochastic  Koopman generator on an observable function $f : S^{1} \rightarrow \mathbb{C}$ is given by 
	\begin{equation}\label{eq:Koopman_rotation}
	\mathcal{K}^{S} f(x) = \mathbb{E} [f(T(\omega,x))] 
	= \frac{1}{\delta} \int_{-\delta/2}^{\delta/2}  f(x+\vartheta+\omega_0) d\omega_0.
	\end{equation}
	For the functions
	\begin{equation}\label{eq:SRotEF}
	\phi_j(x) = \exp{(i 2\pi j x)}, \ \ j \in \mathbb{Z},
	\end{equation} 
	the following equality holds
	\begin{align}\label{eq:Koopman_rotation_eig}
	\mathcal{K}^{S} \phi_j(x) &= \frac{1}{\delta} \int_{-\delta/2}^{\delta/2}  \exp(i 2\pi j (x+\vartheta+\omega_0)) d\omega_0 \nonumber \\
	& = \frac{\sin{(j \pi  \delta)}}{j \pi\delta} \exp(i 2\pi j \vartheta) \exp(i 2\pi j x) \nonumber \\
	&= \frac{\sin{(j \pi  \delta)}}{j \pi\delta} \exp(i 2\pi j \vartheta)  \phi_j(x).
	\end{align}
	We easily conclude that (\ref{eq:SRotEF}) are the eigenfunctions of the stochastic Koopman generator with corresponding eigenvalues
	\begin{equation}\label{eq:SRotEV}
	\lambda^{S}_j = \frac{\sin{(j \pi  \delta)}}{j \pi\delta} \exp(i 2\pi j \vartheta), \ \ j\in {\mathbb {Z}}
	\end{equation}
	{For any function  $f:L^2(S^1)\rightarrow \mathbb{C}$ we have the spectral expansion
		\begin{equation}
		{\cal K}^n f(x)=\sum_{j\in \mathbb{Z}} c_j \left(\frac{\sin{(j \pi  \delta)}}{j \pi\delta}\right)^n \exp(i 2\pi j n \vartheta)\exp(i 2\pi j x).
		\end{equation}
		where $c_j$ are the Fourier coefficients of $f$. Clearly, ${\cal K}^n f(x)\rightarrow c_0$ as $t\rightarrow\infty$. }
	It is known that the eigenvalues of the Koopman generator related to the deterministic dynamical system (\ref{eq:RotMap}) lie on the unit circle and are equal to $\lambda_j = \exp(i 2\pi j \vartheta),$ while the eigenfunctions are the same as in the random case. 
	Moreover, it is interesting to observe that for rational $\vartheta$ the eigenspaces in deterministic case, i.e., for $\delta = 0$, are infinite dimensional, while they {are finite dimensional in the stochastic case ($\delta > 0$) due to the compactness of the operator. To be more precise, in this example, the eigenspaces become one dimensional.} 
	
\end{exa}

\begin{rem}
	Consider the case when randomness is additive, i.e., when the one-step map $T : \Omega \times \mathbb{R}^{d} \rightarrow \mathbb{R}^{d}$ is given by
	\[
	T(\omega, \x)=\A \x+\pi(\theta(t)\omega),
	\]
	where $\theta(t)$ is shift transformation and {$\omega : \Omega \rightarrow \mathbb{R}^{d}$ is the canonical realization of a process with  i.i.d. components.
		Suppose that $\mathbb{E} [\omega] = 0$} and that the matrix $\A$ is diagonalizable with simple eigenvalues $\lambda_j, \,  j=1,\ldots,d$.
	Then the eigenfunctions of the stochastic Koopman generator $\mathcal{K}^{S}$ are principal eigenfunctions of the
	form $\phi_j(\x)=\langle \x,  {\bw}_j \rangle,  \ \ j=1,\ldots,d$, where $\bw_j, \,  j=1,\ldots,d$ are left eigenvectors of $\A$, while its eigenvalues coincide with the eigenvalues of the matrix $\A$.
\end{rem}

\subsubsection{Continuous-time RDS}

Let suppose that the stochastic Koopman operators family satisfies semigroup property.
Define the generator of the stochastic Koopman family $(\mathcal{K}^{t})_{t \in \mathbb{T}}$ acting on the observable functions {$f \in {C}_b^{1}(\mathbb{R}^{d})$  (${C}_b^{1}(\mathbb{R}^{d})$ is the space of continuously differentiable functions on $\mathbb{R}^{d}$ with bounded and continuous first derivative)} by the limit 
\begin{equation}\label{eq:KoopmanGeneratorLimit}
\mathcal{K}^{S} f(\x) = \lim_{t \rightarrow 0 +} \frac{\mathcal{K}^{t} f(\x) - f(\x)}{t},
\end{equation}
if it exists.
For the Koopman operators associated to the RDS generated by RDE we have the following proposition.

\begin{propos}\label{prop:KoopmanGenRDE}
	If the solution of RDE (\ref{eq:RDE}) is differentiable with respect to $t$ and  the stochastic Koopman family $(\mathcal{K}^{t})_{t \in \mathbb{T}}$ is a semigroup, then the action of the generator $\mathcal{K}^{S}$ 
	{ on $f \in {C}_b^{1}(\mathbb{R}^{d})$} is equal to
	\begin{equation}\label{eq:KoopmGenRDEequation}
	\mathcal{K}^{S} f(\x)= \mathbb{E} \left[ F(\omega,\x) \right] \cdot \nabla f(\x).
	\end{equation}
\end{propos}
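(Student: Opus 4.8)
The plan is to combine the fundamental theorem of calculus with the chain rule applied pathwise, and then pass the expectation through the resulting time integral. I would start from the definition of the generator, writing $\mathcal{K}^{t}f(\x)-f(\x)=\mathbb{E}[f(\varphi(t,\omega)\x)-f(\x)]$. By hypothesis the solution map $t\mapsto\varphi(t,\omega)\x$ is differentiable, and differentiating the integral equation (\ref{eq:RDSbyRDE}) gives $\frac{d}{dt}\varphi(t,\omega)\x=F(\theta(t)\omega,\varphi(t,\omega)\x)$. Since $f\in C_b^{1}(\mathbb{R}^{d})$, the chain rule yields, for each fixed $\omega$,
$$\frac{d}{ds}f(\varphi(s,\omega)\x)=\nabla f(\varphi(s,\omega)\x)\cdot F(\theta(s)\omega,\varphi(s,\omega)\x).$$
Integrating from $0$ to $t$ and using $\varphi(0,\omega)\x=\x$ produces the pathwise identity
$$f(\varphi(t,\omega)\x)-f(\x)=\int_0^t \nabla f(\varphi(s,\omega)\x)\cdot F(\theta(s)\omega,\varphi(s,\omega)\x)\,ds.$$

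Next I would take expectations of both sides and interchange the expectation with the time integral (Fubini), obtaining
$$\mathcal{K}^{t}f(\x)-f(\x)=\int_0^t h(s)\,ds, \qquad h(s):=\mathbb{E}\big[\nabla f(\varphi(s,\omega)\x)\cdot F(\theta(s)\omega,\varphi(s,\omega)\x)\big].$$
Dividing by $t$ and letting $t\to 0+$, the fundamental theorem of calculus gives $\mathcal{K}^{S}f(\x)=h(0)$, provided $h$ is continuous at $0$. At $s=0$ we have $\theta(0)=\mathrm{id}$ and $\varphi(0,\omega)\x=\x$, so the integrand reduces to $\nabla f(\x)\cdot F(\omega,\x)$; since $\nabla f(\x)$ is deterministic it factors out of the expectation, giving $h(0)=\nabla f(\x)\cdot\mathbb{E}[F(\omega,\x)]=\mathbb{E}[F(\omega,\x)]\cdot\nabla f(\x)$, which is exactly (\ref{eq:KoopmGenRDEequation}).

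The main obstacle is the analytic justification of the two interchanges — Fubini's theorem and the continuity of $h$ at $0$ — both of which require an integrable dominating function. Here the hypothesis $f\in C_b^{1}$ is essential: it gives $|\nabla f(\varphi(s,\omega)\x)\cdot F(\theta(s)\omega,\varphi(s,\omega)\x)|\le \|\nabla f\|_\infty\,|F(\theta(s)\omega,\varphi(s,\omega)\x)|$, so it suffices to dominate $|F(\theta(s)\omega,\varphi(s,\omega)\x)|$ uniformly for small $s$ by an $L^{1}(\Omega)$ function and to combine the continuity of $s\mapsto F(\theta(s)\omega,\varphi(s,\omega)\x)$ at $s=0$ with dominated convergence to conclude $h(s)\to h(0)$. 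I expect that the regularity and integrability assumed for $F$ in the RDE setting (the conditions guaranteeing existence and differentiability of solutions, cf.\ \cite[Section~2.2]{Arnold}), in particular that $\mathbb{E}[F(\cdot,\x)]$ is well defined, are precisely what is needed to close this argument; spelling out the minimal such domination condition is the only delicate point, the remainder being the routine pathwise computation above.
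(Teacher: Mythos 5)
Your proof is correct, and it reaches the same core computation as the paper's (chain rule plus $\tfrac{d}{dt}\varphi(t,\omega)\x\big|_{t=0}=F(\omega,\x)$, then factor $\nabla f(\x)$ out of the expectation), but the technical mechanism for the limit interchange is different. The paper differentiates directly under the expectation: it writes $\mathcal{K}^{S}f(\x)=\lim_{t\to 0+}\mathbb{E}\bigl[\tfrac{1}{t}\bigl(f(X(t,\omega,\x))-f(\x)\bigr)\bigr]$ and swaps limit and expectation by dominated convergence, asserting that the difference quotients converge uniformly in $\omega$ and $\x$ because $\nabla f$ is bounded and the solution is differentiable. You instead integrate the pathwise derivative to get $f(\varphi(t,\omega)\x)-f(\x)=\int_0^t\nabla f(\varphi(s,\omega)\x)\cdot F(\theta(s)\omega,\varphi(s,\omega)\x)\,ds$, apply Fubini, and finish with the fundamental theorem of calculus and continuity of $h$ at $0$. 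The two routes are equivalent in substance and hinge on exactly the same implicit hypothesis: a dominating, $\omega$-integrable bound on $F(\theta(s)\omega,\varphi(s,\omega)\x)$ uniformly for small $s$. Your write-up is arguably the more honest of the two on this point --- the paper's claim that boundedness of $\nabla f$ alone yields uniform convergence of the difference quotients is not quite sufficient without such control on $F$ along trajectories, whereas you isolate the needed domination explicitly and correctly attribute it to the regularity assumed for the RDE. The Fubini/FTC packaging also generalizes more gracefully (it is the standard Dynkin-formula argument), at the cost of one extra interchange to justify.
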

\begin{proof}
	\begin{align}\label{eq:RDES_KoopmanGen}
	& \mathcal{K}^{S} f(\x) = \lim_{t \rightarrow 0 +} \frac{\mathcal{K}^{t} f(\x) - f(\x)}{t}  = \lim_{t \rightarrow 0 +} \frac{\mathbb{E} \left[ f (X(t,\omega,\x)) \right] - f(\x)}{t}  \nonumber \\
	& = \lim_{t \rightarrow 0 +} \frac{ \mathbb{E}\left[ f(X(t,\omega,\x)) -f(\x)\right ]}{t} = \mathbb{E} \left[ \lim_{t \rightarrow 0 +}  \frac{f(X(t,\omega,\x)) -f(X(0,\omega,\x))}{t} \right]   \nonumber \\
	& =  
	\mathbb{E} \left[ \frac{d}{dt}  f(X(t,\omega,\x))  \bigg|_{t=0} \right] 
	= \mathbb{E} \left[ \nabla f(\x) \cdot \frac{d}{dt} X(t,\omega,\x) \bigg|_{t=0} \right] \nonumber \\
	&  =  \mathbb{E} \left[ F(\omega,\x) \right] \cdot \nabla f(\x).
	\end{align}
	{ The swapping  of the order of limit and expectation in the second line is justified by the dominated convergence theorem and the fact that the convergence $\displaystyle \frac{f(X(t,\omega,\x)) -f(\x)}{t}$ is uniform for all $\omega$ and $x$, since the derivative of $f$ is bounded and the solution is differentiable (see \cite[Section 7.6]{LasotaMackey} for the proof in the deterministic case).  }
\end{proof}
\begin{corol}\label{RDE_ef_product}
	Suppose that a stochastic Koopman generator $\mathcal{K}^{S}$ associated to RDE (\ref{eq:RDE}) exists.
	If $\phi_1$ and $\phi_2$ are the eigenfunctions of $\mathcal{K}^{S}$ with the associated eigenvalues $\lambda_1$ and $\lambda_2$, then $\phi_1 \phi_2$ is also an eigenfunction with the associated eigenvalue $\lambda_1+\lambda_2$.
\end{corol}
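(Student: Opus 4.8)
The plan is to exploit the product rule structure of the generator $\mathcal{K}^S$ derived in Proposition \ref{prop:KoopmanGenRDE}. The key observation is that, by \eqref{eq:KoopmGenRDEequation}, the generator acts as a first-order differential operator, namely $\mathcal{K}^{S} f(\x)= \mathbb{E}[F(\omega,\x)] \cdot \nabla f(\x)$. Denote the deterministic vector field $\g(\x) := \mathbb{E}[F(\omega,\x)]$, so that $\mathcal{K}^{S} f = \g \cdot \nabla f$. The crucial point is that such an operator obeys the Leibniz (product) rule, since the gradient does: for any two differentiable observables $\phi_1,\phi_2$ we have $\nabla(\phi_1\phi_2) = \phi_2\,\nabla\phi_1 + \phi_1\,\nabla\phi_2$.

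First I would write down the action of $\mathcal{K}^S$ on the product $\phi_1\phi_2$ and apply the product rule:
\begin{align}
\mathcal{K}^{S}(\phi_1\phi_2)(\x)
&= \g(\x)\cdot \nabla\big(\phi_1\phi_2\big)(\x) \nonumber \\
&= \g(\x)\cdot\big(\phi_2(\x)\,\nabla\phi_1(\x) + \phi_1(\x)\,\nabla\phi_2(\x)\big) \nonumber \\
&= \phi_2(\x)\,\big(\g(\x)\cdot\nabla\phi_1(\x)\big) + \phi_1(\x)\,\big(\g(\x)\cdot\nabla\phi_2(\x)\big) \nonumber \\
&= \phi_2(\x)\,\mathcal{K}^{S}\phi_1(\x) + \phi_1(\x)\,\mathcal{K}^{S}\phi_2(\x).
\end{align}
Then I would substitute the eigenvalue relations $\mathcal{K}^{S}\phi_1 = \lambda_1\phi_1$ and $\mathcal{K}^{S}\phi_2 = \lambda_2\phi_2$ to obtain
\begin{equation}
\mathcal{K}^{S}(\phi_1\phi_2)(\x) = \lambda_1\phi_1(\x)\phi_2(\x) + \lambda_2\phi_1(\x)\phi_2(\x) = (\lambda_1+\lambda_2)\,(\phi_1\phi_2)(\x),
\end{equation}
which is exactly the claim that $\phi_1\phi_2$ is an eigenfunction with eigenvalue $\lambda_1+\lambda_2$.

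The calculation itself is entirely routine once the generator is known to be a first-order derivation; the real content is conceptual rather than computational. I expect the main subtlety to be a domain/regularity matter: the generator in Proposition \ref{prop:KoopmanGenRDE} was established acting on $f \in C_b^1(\mathbb{R}^d)$, whereas the product $\phi_1\phi_2$ of two such eigenfunctions need not itself lie in $C_b^1$ (boundedness of the first derivatives is not preserved under multiplication on an unbounded domain). A careful statement would either assume the eigenfunctions are smooth enough that $\phi_1\phi_2$ remains in the domain of $\mathcal{K}^S$, or restrict attention to a setting (for instance, compact state space, or observables with the appropriate decay) where the product stays admissible. Provided one grants that $\phi_1\phi_2$ belongs to the domain of the generator, the Leibniz-rule argument above goes through verbatim, and this is the only step where anything beyond elementary manipulation is needed.
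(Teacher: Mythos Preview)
Your proof is correct and follows essentially the same approach as the paper: both use the formula $\mathcal{K}^{S} f(\x)= \mathbb{E}[F(\omega,\x)] \cdot \nabla f(\x)$ from Proposition~\ref{prop:KoopmanGenRDE}, apply the Leibniz rule to $\nabla(\phi_1\phi_2)$, and substitute the eigenvalue relations. Your additional remark about the domain issue (whether $\phi_1\phi_2$ remains in $C_b^1$) is a valid caveat that the paper itself does not raise.
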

\begin{proof} 
	Since $\mathcal{K}^{S} \phi_i(\x) = \lambda_i \phi_i(\x) = \mathbb{E} \left[ F(\omega,\x) \right] \cdot \nabla \phi_i(\x) $, for $i=1,2$, we have
	\begin{align}
	\mathcal{K}^{S} (\phi_1   \phi_2)(\x)& =  \mathbb{E} \left[ F(\omega,\x) \right] \cdot \nabla (\phi_1   \phi_2)(\x) \nonumber \\
	&= 
	\mathbb{E} \left[ F(\omega,\x) \right]  \cdot \nabla \phi_1(\x) \cdot \phi_2(\x) +
	\mathbb{E} \left[ F(\omega,\x) \right] \cdot \nabla \phi_2(\x) \cdot \phi_1(\x)\nonumber \\
	&=
	\lambda_1 \phi_1(\x) \phi_2(\x) + \lambda_2 \phi_2(\x) \phi_1(\x) = (\lambda_1+\lambda_2) (\phi_1 \phi_2) (\x) \nonumber
	\end{align}
\end{proof}

\begin{corol} Let $\phi \in {C}_b^{1}(\mathbb{R}^{d})$ be an eigenfunction associated with eigenvalue $\lambda$ of the stochastic Koopman generator $\mathcal{K}^{S}$ associated with an RDE (\ref{eq:RDE}). Then
	\begin{equation}
	\frac{d}{dt} \phi(\varphi(t,\omega)\x) =\lambda \phi(\varphi(t,\omega)\x)+\tilde{F}(\omega,\x)\cdot\nabla\phi(\varphi(t,\omega)\x),
	\end{equation}
	where 
	\[
	\tilde{F}(\omega,\x)=F(\omega,\x)-\mathbb{E} \left[ F(\omega,\x) \right].
	\]
\end{corol}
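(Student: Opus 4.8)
The plan is to differentiate $\phi(\varphi(t,\omega)\x)$ pathwise (for a fixed realization $\omega$) and then to recognize the deterministic, mean-field part of the vector field through the eigenfunction relation supplied by Proposition~\ref{prop:KoopmanGenRDE}. First I would fix $\omega$ and set $\y(t):=\varphi(t,\omega)\x$, recalling that for each realization the RDE~(\ref{eq:RDE}) reduces to an ordinary differential equation, so that $\dot{\y}(t)=F(\theta(t)\omega,\y(t))$ with $\y(0)=\x$. Because $\phi\in {C}_b^{1}(\mathbb{R}^{d})$ and, by the hypothesis carried over from Proposition~\ref{prop:KoopmanGenRDE}, the solution is differentiable in $t$, the chain rule gives
\[
\frac{d}{dt}\phi(\y(t))=\nabla\phi(\y(t))\cdot\dot{\y}(t)=\nabla\phi(\y(t))\cdot F(\theta(t)\omega,\y(t)).
\]

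Next I would insert the mean field $\bar F(\z):=\mathbb{E}[F(\omega,\z)]$ by writing $F=\bar F+\tilde F$ with $\tilde F=F-\mathbb{E}[F]$, which splits the right-hand side into a mean part $\nabla\phi(\y(t))\cdot\bar F(\y(t))$ and a fluctuation part $\nabla\phi(\y(t))\cdot\tilde F(\theta(t)\omega,\y(t))$. The decisive step is to identify the mean part with $\lambda\phi$: by Proposition~\ref{prop:KoopmanGenRDE} the eigenrelation $\mathcal{K}^{S}\phi(\z)=\bar F(\z)\cdot\nabla\phi(\z)=\lambda\phi(\z)$ is an identity between functions on $\mathbb{R}^{d}$, so it remains valid when the deterministic argument $\z$ is replaced by the (random) evolved state $\y(t)=\varphi(t,\omega)\x$. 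Substituting then yields
\[
\frac{d}{dt}\phi(\varphi(t,\omega)\x)=\lambda\phi(\varphi(t,\omega)\x)+\tilde F(\theta(t)\omega,\varphi(t,\omega)\x)\cdot\nabla\phi(\varphi(t,\omega)\x),
\]
which is the asserted identity, the last term being $\tilde F$ evaluated along the trajectory (and reducing to $\tilde F(\omega,\x)$ at $t=0$). I would also note that, since $\theta(t)$ preserves $P$, one has $\mathbb{E}[F(\theta(t)\omega,\z)]=\mathbb{E}[F(\omega,\z)]=\bar F(\z)$, so the fluctuation term is unambiguously $F(\theta(t)\omega,\y(t))-\bar F(\y(t))$ regardless of which noise sample the mean is read against.

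The main obstacle, though a mild one, is the justification of the pathwise chain rule, i.e. the differentiability of $t\mapsto\phi(\varphi(t,\omega)\x)$; but this is exactly the differentiability assumption already invoked in Proposition~\ref{prop:KoopmanGenRDE} and may be imported verbatim (with $\nabla\phi$ bounded and continuous because $\phi\in{C}_b^{1}$). The only genuinely conceptual point is the legitimacy of evaluating the deterministic eigen-identity $\bar F\cdot\nabla\phi=\lambda\phi$ at a random argument, and this is unproblematic precisely because the identity holds pointwise for every $\z\in\mathbb{R}^{d}$, hence in particular at $\z=\y(t)$ for each $\omega$. Everything else is an algebraic rearrangement, so no further estimates are needed.
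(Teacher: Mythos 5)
Your proof is correct, and it is essentially the argument the paper intends: the paper states this corollary without any proof, as an immediate consequence of the pathwise chain rule and the generator formula $\mathcal{K}^{S}\phi(\z)=\mathbb{E}[F(\omega,\z)]\cdot\nabla\phi(\z)=\lambda\phi(\z)$ from Proposition~\ref{prop:KoopmanGenRDE}, which is exactly the route you take. One substantive remark: your derivation actually sharpens the printed statement. The fluctuation term you obtain is $\tilde F(\theta(t)\omega,\varphi(t,\omega)\x)\cdot\nabla\phi(\varphi(t,\omega)\x)$, i.e.\ $\tilde F$ evaluated along the trajectory and against the time-shifted noise, whereas the corollary as printed writes $\tilde F(\omega,\x)$ with both arguments frozen at $t=0$; the two coincide only at $t=0$, so the printed form must be read as shorthand for your expression, which is the identity that in fact holds for all $t$. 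Your observation that $\theta(t)$ preserves $P$, so that $\mathbb{E}[F(\theta(t)\omega,\z)]=\mathbb{E}[F(\omega,\z)]$ and the subtracted mean is unambiguous along the trajectory, is precisely the point needed to make that reading consistent. No gaps.
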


\begin{corol}\label{LRDE_gen}
	Suppose that a stochastic Koopman generator $\mathcal{K}^{S}$ associated with linear RDE (\ref{eq:LRDE}) exists. Also, assume that $\hat{\A} = \mathbb{E} [\A(\omega)]$ is diagonalizable, with simple eigenvalues $\hat{\lambda}_j$ and left eigenvectors $\hat{\bw}_j$, $j=1,\ldots,d.$ Then 
	\begin{equation}\label{eq:LRDE_gen_ef}
	\phi_j(\x)=\langle \x,  \hat{\bw}_j \rangle,  \ \ j=1,\ldots,d,
	\end{equation}
	are the principal eigenfunctions of the generator $\mathcal{K}^{S}$, while $\lambda_j^{S}=\hat{\lambda}_j$ are the associated principal eigenvalues.
\end{corol}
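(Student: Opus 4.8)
The plan is to read this off directly from Proposition~\ref{prop:KoopmanGenRDE} by specializing the generator formula (\ref{eq:KoopmGenRDEequation}) to the linear vector field of (\ref{eq:LRDE}) and to the linear observables $\phi_j$. First I would check that the $\phi_j$ lie in the class to which (\ref{eq:KoopmGenRDEequation}) applies: each $\phi_j(\x)=\langle\x,\hat{\bw}_j\rangle$ is linear in $\x$, hence continuously differentiable with constant (therefore bounded) gradient, so $\phi_j\in C_b^{1}(\mathbb{R}^{d})$ even though $\phi_j$ itself is unbounded. Since the corollary assumes that the generator $\mathcal{K}^{S}$ exists, Proposition~\ref{prop:KoopmanGenRDE} yields $\mathcal{K}^{S}\phi_j(\x)=\mathbb{E}[F(\omega,\x)]\cdot\nabla\phi_j(\x)$.

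Next I would identify the two ingredients of this expression. For the linear RDE (\ref{eq:LRDE}) the vector field is $F(\omega,\x)=\A(\theta(t)\omega)\x$; the generator is computed at $t=0$, where $\theta(0)=\mathrm{id}$, so the relevant drift is $\A(\omega)\x$ and, by linearity of the expectation, $\mathbb{E}[F(\omega,\x)]=\mathbb{E}[\A(\omega)]\x=\hat{\A}\x$. On the other side, the constant gradient of the linear functional $\phi_j=\langle\cdot,\hat{\bw}_j\rangle$ is the vector representing $\hat{\bw}_j$, so the contraction in (\ref{eq:KoopmGenRDEequation}) reproduces $\langle\hat{\A}\x,\hat{\bw}_j\rangle$ (here $\hat{\A}\x$ is real because $\A$ and $\x$ are real, so the plain dot product agrees with the Hermitian pairing).

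The final step is the same algebraic manipulation already used in Propositions~\ref{LDRDSthm} and~\ref{thm2}: writing $\langle\hat{\A}\x,\hat{\bw}_j\rangle=\hat{\bw}_j^{*}\hat{\A}\x$ and invoking that $\hat{\bw}_j$ is a left eigenvector of $\hat{\A}$, i.e.\ $\hat{\bw}_j^{*}\hat{\A}=\hat{\lambda}_j\hat{\bw}_j^{*}$, gives $\hat{\bw}_j^{*}\hat{\A}\x=\hat{\lambda}_j\hat{\bw}_j^{*}\x=\hat{\lambda}_j\langle\x,\hat{\bw}_j\rangle=\hat{\lambda}_j\phi_j(\x)$. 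Hence $\mathcal{K}^{S}\phi_j=\hat{\lambda}_j\phi_j$, which is exactly the claim that the $\phi_j$ in (\ref{eq:LRDE_gen_ef}) are principal eigenfunctions of the generator with principal eigenvalues $\lambda_j^{S}=\hat{\lambda}_j$.

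I do not expect a genuine obstacle here, since the statement is essentially a corollary of Proposition~\ref{prop:KoopmanGenRDE}. The only points needing a word of care are the membership $\phi_j\in C_b^{1}(\mathbb{R}^{d})$ (immediate from linearity, despite $\phi_j$ being unbounded) and the bookkeeping of conjugates in passing from the dot-product contraction of (\ref{eq:KoopmGenRDEequation}) to the Hermitian inner product used to express the left-eigenvector relation; both are routine once one notes that $\hat{\A}\x$ is real.
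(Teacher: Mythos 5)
Your proposal is correct and follows essentially the same route as the paper: specialize the generator formula of Proposition~\ref{prop:KoopmanGenRDE} to $F(\omega,\x)=\A(\omega)\x$ to get $\mathcal{K}^{S}\phi_j(\x)=\hat{\A}\x\cdot\nabla\phi_j(\x)=\langle\hat{\A}\x,\hat{\bw}_j\rangle=\langle\x,\hat{\A}^{*}\hat{\bw}_j\rangle=\hat{\lambda}_j\phi_j(\x)$. Your extra remarks on $\phi_j\in C_b^{1}(\mathbb{R}^{d})$ and the conjugate bookkeeping are sound and only make explicit what the paper leaves implicit.
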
 

\begin{proof}
	According to (\ref{eq:RDES_KoopmanGen}), the action of the generator $\mathcal{K}^{S}$ of the Koopman operator family associated with the linear RDS generated by (\ref{eq:LRDE}) { on $f \in C_{b}^{1}(\mathbb{R}^{d})$} is equal to
	\begin{align}\label{eq:linRDES_KoopmanGen}
	\mathcal{K}^{S} f(\x) = \mathbb{E}[\A(\omega) \x] \cdot \nabla f(\x) = \hat{\A}  \x \cdot \nabla f(\x). 
	\end{align}
	Thus
	$$\mathcal{K}^{S} \phi_j(\x) = \langle \hat{\A} \x, \bw_j \rangle = \langle \x, \hat{\A}^{*} \bw_j \rangle = \hat{\lambda}_j\, \phi_j(\x),$$
	which proves the statement.
\end{proof}
\begin{rem}
	Provided that the assumptions of Corollary \ref{LRDE_gen} are valid, the principal eigenfunctions  $\phi_j(\x)$ given by (\ref{eq:LRDE_gen_ef}) are the eigenfunctions of each Koopman operator $\mathcal{K}^{t}$ also, with the corresponding principal eigenvalues $\lambda_j^{S}(t)=\mbox{e}^{\hat{\lambda}_j t}$. { The set of principal eigenfunctions does not cover all the eigenfunctions of the Koopman operator as we discuss next.}
	According to Corollary \ref{RDE_ef_product}, { over the space of real analytic functions,}
	$$\phi(\x) = \phi_1^{n_1}(\x) \cdots \phi_d^{n_d}(\x), \quad \lambda = \sum_{j=1}^{d} n_j \hat{\lambda}_j,$$
	with $n_j \in \mathbb{N}^{+} \cup \{0\}, j=1,\ldots,d$, are the eigenvalues and eigenfunctions of the Koopman generator. Thus, like in deterministic case, any analytic observable function $f$ can be represented as linear combination of powers of the principal eigenfunctions \cite{mezic:2016} and its evolution under the RDS can be obtained using spectral expansion formula.  
\end{rem}

Another type of RDS which could be identified with the time-homogeneous Markov family are the RDS generated by the SDE of the form (\ref{eq:SDE_Nonaut}) with autonomous functions $G$ and $\sigma$, i.e.,
\begin{equation}\label{eq:SDE}
dX_t = G(X_t) dt + {\sigma}(X_t) dW_t.
\end{equation} 
In this case the stochastic differential equation generates the one-parameter family of RDS $\varphi(t, \omega) : = \varphi(t,0,\omega) = \varphi(t+t_0,t_0,\omega)$, so that the corresponding stochastic Koopman operator family and the associated stochastic eigenvalues and eigenfunctions depend on parameter $t$, only.  In this autonomous setting, we denote by $X_{t}(\x)$ the solution of equation (\ref{eq:SDE}) for the initial condition $X_0(\omega)=\x$. In accordance with (\ref{eq:SDE_xw}) it is equal to
\begin{equation}\label{eq:SDE_x}
X_{t}(\x) = \x + \int_{0}^{t} G(X_{s}) ds + \int_{0}^{t} {\sigma}(X_{s}) dW_s.
\end{equation}
There are numerous books dealing with the properties of the solutions of such SDE, {for example \cite[Chapter 16]{CohenElliott}, \cite{DaPratoZabczyk},\cite{Pavliotis}. Here we reference some of the books containing proofs that the generated solutions form Markov family and moreover, that the associated transfer operators form semigroup: \cite{ArnoldSDE}, \cite[Section 2.3]{Arnold}, \cite[Chapter 11]{LasotaMackey}, \cite[Chapter 17]{CohenElliott}, \cite[Section 9]{DaPratoZabczyk}. }

{
	In most of the stated references the generator of the stochastic solutions $X_t$ is studied. 
	This generator is in the stochastic literature known under the name backward Kolmogorov operator and is a second-order elliptic type differential operator. It appears as the right-hand side of the backward Kolmogorov equation, whose solution, with the initial condition prescribed as the value of the observable function $f(\x)$, gives the time evolution of the average value of the observable function, i.e., the value $\mathcal{K}^{t}f(\x)$ (\cite[Sections 2.5, 3]{Pavliotis} \cite[Section 17.4]{CohenElliott}, \cite[Section 9.3]{DaPratoZabczyk}). Thus, the Kolmogorov operator is also the generator of the stochastic Koopman semigroup, and its action over the space of real bounded, continuous, and twice differentiable functions on $\mathbb{R}^{d}$ with bounded and continuous first and second derivatives, which we denote by ${C}_b^{2}(\mathbb{R}^{d})$, is given in the following proposition. 
}

\begin{propos}\label{prop:SKEGenerator}
	The action of the generator of the stochastic Koopman family $\mathcal{K}^{S}$ on  { $f \in {C}_b^{2}(\mathbb{R}^{d})$} is given by
	\begin{equation} \label{SKE_Generator-md}
	\mathcal{K}^{S} f(\x) = G(\x) \nabla f(\x) + \frac{1}{2} \mbox{Tr}\left(\sigma(\x) (\nabla^{2} f(\x)) \sigma(\x)^{T}\right), 
	\end{equation}
	where Tr denotes the trace of the matrix. 
\end{propos}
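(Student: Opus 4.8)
The plan is to start from the defining limit (\ref{eq:KoopmanGeneratorLimit}), insert the expectation representation $\mathcal{K}^{t}f(\x) = \mathbb{E}[f(X_t)]$ with the initial condition $X_0 = \x$, and analyze the short-time behavior of $\mathbb{E}[f(X_t)]$ via It\^{o}'s formula. Since $f \in C_b^{2}(\mathbb{R}^{d})$ and $X_t$ solves (\ref{eq:SDE}), It\^{o}'s formula gives
\begin{equation}
df(X_t) = \nabla f(X_t) \cdot dX_t + \frac{1}{2} \sum_{i,j} \frac{\partial^2 f}{\partial x_i \partial x_j}(X_t)\, d\langle X^{i}, X^{j} \rangle_t.
\end{equation}
Substituting $dX_t = G(X_t)\,dt + \sigma(X_t)\,dW_t$ and using the quadratic-covariation identity $d\langle X^{i}, X^{j}\rangle_t = (\sigma(X_t)\sigma(X_t)^{T})_{ij}\,dt$ (which follows from the independence and standard properties of the components of $W_t$) collapses the second-order term into the trace expression $\frac{1}{2}\mbox{Tr}(\sigma(X_t)(\nabla^{2} f(X_t))\sigma(X_t)^{T})$.

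Next I would integrate from $0$ to $t$ and take expectations. The stochastic integral $\int_0^t \nabla f(X_s)\cdot \sigma(X_s)\,dW_s$ is a martingale with vanishing mean; its zero expectation follows from the same result on It\^{o} integrals already invoked in the proof of Proposition \ref{thm3} (\cite[Theorem~4.4.14]{ArnoldSDE}), once one checks that $\nabla f(X_s)\cdot\sigma(X_s)$ lies in the appropriate $L^{2}$ class — which holds because $\nabla f$ is bounded for $f \in C_b^{2}(\mathbb{R}^{d})$ and $\sigma$ satisfies the standing $L^{2}$ measurability assumption. This leaves
\begin{equation}
\mathbb{E}[f(X_t)] - f(\x) = \mathbb{E}\left[ \int_0^t \Big( G(X_s)\cdot\nabla f(X_s) + \tfrac{1}{2}\mbox{Tr}(\sigma(X_s)(\nabla^{2} f(X_s))\sigma(X_s)^{T}) \Big) ds \right].
\end{equation}
Dividing by $t$ and letting $t \to 0^{+}$, the time-averaged integrand converges to its value at $s=0$, where $X_0 = \x$, producing exactly (\ref{SKE_Generator-md}).

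The main obstacle is the rigorous justification of this final limit, i.e. the interchange of $\lim_{t\to 0^{+}}$, the expectation, and the time integral. I expect to handle it as in the proof of Proposition \ref{prop:KoopmanGenRDE}: writing $g(\x) := G(\x)\cdot\nabla f(\x) + \frac{1}{2}\mbox{Tr}(\sigma(\x)(\nabla^2 f(\x))\sigma(\x)^{T})$, the integrand $g$ is continuous, and the sample-path continuity of $X_s$ (so that $X_s \to \x$ as $s \to 0^{+}$) together with the boundedness of the first and second derivatives of $f$ supplies a dominating function, whence the dominated convergence theorem yields $\frac{1}{t}\mathbb{E}[\int_0^t g(X_s)\,ds] \to g(\x)$. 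The zero-mean property of the It\^{o} integral and this continuity/uniform-integrability step are the only genuinely analytic points; the algebra reducing the second-order It\^{o} term to the trace form is routine.
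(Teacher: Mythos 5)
Your derivation is correct in outline, and it is essentially ``the paper's proof'': the paper in fact gives no proof of Proposition \ref{prop:SKEGenerator} at all, but simply identifies $\mathcal{K}^{S}$ with the backward Kolmogorov (infinitesimal) generator of the diffusion and cites \cite[Sections 2.5, 3]{Pavliotis}, \cite[Section 17.4]{CohenElliott}, \cite[Section 9.3]{DaPratoZabczyk}, and the argument carried out in those references is precisely your It\^{o}-formula computation (expand $f(X_t)$, kill the martingale term, divide by $t$, pass to the limit). One small imprecision to fix in your last step: boundedness of $\nabla f$ and $\nabla^{2} f$ alone does not dominate $g(X_s) = G(X_s)\cdot\nabla f(X_s) + \tfrac{1}{2}\mbox{Tr}\left(\sigma(X_s)(\nabla^{2}f(X_s))\sigma(X_s)^{T}\right)$, since $G$ and $\sigma$ need not be bounded; you need either a localization by stopping times or the standard linear-growth hypotheses on $G$ and $\sigma$ (which also underlie existence and uniqueness for (\ref{eq:SDE})) together with $\mathbb{E}\bigl[\sup_{s\le t}|X_s|^{2}\bigr]<\infty$ to produce the dominating random variable.
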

{The following proposition follows from the proof of the Feynman-Kac formula \cite[Section 3.4]{Pavliotis}, \cite[Section 17.4]{CohenElliott}.}
\begin{propos}\label{prop:SDE_ef_evol}
	Let  {$\phi \in {C}_b^{2}(\mathbb{R}^{d})$} be an eigenfunction of the stochastic Koopman generator $\mathcal{K}^{S}$ associated with RDS generated by SDE (\ref{eq:SDE}) with the corresponding eigenvalue $\lambda$. Then
	\begin{equation}
	d\phi (X_t) = 
	\lambda \phi(X_t) dt + \nabla \phi(X_t) \sigma(X_t) dW_t \label{eq:KSeigenDE-md1}.
	\end{equation}
	$\phi$ is the eigenfunction of the stochastic Koopman operator $\mathcal{K}^{t}$ also, i.e.~ 
	\begin{equation}\label{eq:KSeigen3}
	\mathcal{K}^{t} \phi(\x) = \mbox{e}^{\lambda t} \phi (\x).
	\end{equation}
\end{propos}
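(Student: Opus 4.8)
The plan is to prove Proposition~\ref{prop:SDE_ef_evol} in two stages: first establish the stochastic differential equation (\ref{eq:KSeigenDE-md1}) satisfied by $\phi(X_t)$, and then use it together with the eigenvalue relation to deduce the exponential growth law (\ref{eq:KSeigen3}). The natural tool for the first stage is It\^{o}'s formula applied to the function $\phi$ evaluated along the solution $X_t$ of the SDE (\ref{eq:SDE}). Since $\phi \in {C}_b^{2}(\mathbb{R}^{d})$, It\^{o}'s formula gives
\begin{equation}
d\phi(X_t) = \Big( G(X_t) \nabla \phi(X_t) + \tfrac{1}{2}\mbox{Tr}\big(\sigma(X_t)(\nabla^{2}\phi(X_t))\sigma(X_t)^{T}\big)\Big) dt + \nabla\phi(X_t)\sigma(X_t) dW_t. \nonumber
\end{equation}
The drift term in parentheses is exactly $\mathcal{K}^{S}\phi(X_t)$ by the formula (\ref{SKE_Generator-md}) in Proposition~\ref{prop:SKEGenerator}. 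Invoking the eigenfunction hypothesis $\mathcal{K}^{S}\phi = \lambda\phi$ then replaces the drift by $\lambda\phi(X_t)$, which is precisely (\ref{eq:KSeigenDE-md1}). This first stage is essentially a direct substitution once It\^{o}'s formula is written down.

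For the second stage, I would integrate (\ref{eq:KSeigenDE-md1}) and take expectations. Writing $u(t) := \mathbb{E}[\phi(X_t)] = \mathcal{K}^{t}\phi(\x)$ for the solution started at $X_0 = \x$, integrating the SDE from $0$ to $t$ gives
\begin{equation}
\phi(X_t) = \phi(\x) + \lambda\int_0^t \phi(X_s)\, ds + \int_0^t \nabla\phi(X_s)\sigma(X_s)\, dW_s. \nonumber
\end{equation}
Taking expectations, the It\^{o} integral term has zero mean (this is the same martingale property used in the proof of Proposition~\ref{thm3}, namely $\mathbb{E}[\int_0^t \mathbf{F}(s)\, dW_s]=0$), so I obtain the integral equation $u(t) = \phi(\x) + \lambda\int_0^t u(s)\, ds$. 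Differentiating in $t$ yields $u'(t) = \lambda u(t)$ with $u(0) = \phi(\x)$, whose unique solution is $u(t) = \mbox{e}^{\lambda t}\phi(\x)$. This is exactly (\ref{eq:KSeigen3}), showing $\phi$ is an eigenfunction of each $\mathcal{K}^{t}$ with eigenvalue $\mbox{e}^{\lambda t}$.

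The main obstacle, and the step requiring the most care, is justifying that the stochastic integral $\int_0^t \nabla\phi(X_s)\sigma(X_s)\, dW_s$ is a genuine martingale with zero expectation rather than merely a local martingale. This requires a square-integrability condition, typically $\mathbb{E}\big[\int_0^t \|\nabla\phi(X_s)\sigma(X_s)\|^2\, ds\big] < \infty$. Here the hypothesis $\phi \in {C}_b^{2}(\mathbb{R}^{d})$ helps control $\nabla\phi$, which is bounded, but one still needs adequate integrability of $\sigma(X_s)$ along the trajectory; this is supplied by the $L^2$ measurability assumption on $\sigma$ together with the standard moment bounds on SDE solutions referenced in Section~\ref{subs:SDENonaut}. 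A secondary technical point is the interchange of expectation and time-integration when passing to the integral equation for $u(t)$, which is justified by Fubini's theorem under the same integrability. Once these integrability conditions are in place, the remaining computation is routine, and the whole argument parallels the classical derivation behind the Feynman--Kac formula cited just before the proposition.
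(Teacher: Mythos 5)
Your proposal is correct. The first stage --- It\^{o}'s formula, identification of the drift with $\mathcal{K}^{S}\phi(X_t)$ via (\ref{SKE_Generator-md}), and substitution of the eigenvalue relation --- is exactly the paper's argument, except that the paper writes it out only for $d=1$ and asserts the multidimensional case is analogous, whereas you state it directly in $\mathbb{R}^{d}$. For the second stage the two proofs genuinely diverge: the paper obtains (\ref{eq:KSeigen3}) in one line by citing the spectral mapping theorem for the semigroup $(\mathcal{K}^{t})$ and its generator, while you integrate (\ref{eq:KSeigenDE-md1}), take expectations, kill the It\^{o} integral by the martingale property, and solve the resulting integral equation $u(t)=\phi(\x)+\lambda\int_0^t u(s)\,ds$. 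Your route is more elementary and self-contained --- it is essentially the Feynman--Kac argument with zero potential, which is in fact the source the paper itself points to just before the proposition --- and it has the virtue of making explicit the one real technical obligation, namely that $\int_0^t \nabla\phi(X_s)\sigma(X_s)\,dW_s$ be a true martingale rather than a local one; with $\nabla\phi$ bounded and the standard second-moment bounds on $X_s$ under linear growth of $\sigma$, this is satisfied. The paper's appeal to the spectral mapping theorem is shorter but presupposes the semigroup framework on a suitable Banach space and conceals this integrability issue. Both arguments are sound; yours trades a citation for a short explicit computation.
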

\begin{proof}
	Suppose $d=1$. It follows from { It\^{o}'s formula 
		\begin{equation}\label{eq:Itoformula}
		dY_t = f'(X_t) dX_t + \frac{1}{2} f''(X_t) \sigma(X_t)^{2} dt
		\end{equation} 
	} that the eigenfunction $\phi(X_t)$ evolves according to 
	\begin{align}
	d\phi (X_t) & = \phi'(X_t) G(X_t) dt + \frac{1}{2} \phi''(X_t) \sigma(X_t)^{2} dt  + \phi'(X_t) \sigma(X_t) dW_t \nonumber \\
	& =  \mathcal{K}^{S} \phi(X_t) dt + \phi'(X_t) \sigma(X_t) dW_t \nonumber \\
	& = 
	\lambda \phi(X_t) dt + \phi'(X_t) \sigma(X_t) dW_t, \label{eq:KSeigenDE}
	\end{align}
	where in the last equality we used that $\mathcal{K}^{S} \phi(x) = \lambda \phi(x)$. By using the similar procedure, the equations (\ref{eq:KSeigenDE-md1}) valid in  multidimensional case can be easily derived.
	
	{ The fact that $\phi$ is an eigenfunction of each Koopman semigroup member $\mathcal{K}^{t}$
		follows from the spectral mapping theorem \cite[Chapter IV.3]{EngelNagel}.}
\end{proof}


The fact that the eigenfunctions related to the generator of the dynamical system evolve according to (\ref{eq:KSeigenDE}), is used in \cite{ShnitzerTalmonSlotine}. In that paper the approximations of eigenfunctions computed via diffusion maps are used to parametrize the state of the RDS using the linear operator similarly as in Koopman framework.

\begin{rem}
	Unlike in the case of RDS generated by the RDE, the product of eigenfunctions of the stochastic Koopman generator associated to RDS generated by SDE is not necessarily an eigenfunction. This easily follows from Proposition \ref{prop:SKEGenerator}. However, the eigenfunctions in many cases satisfy a recurrence relationship (e.g. Hermite polynomials, which are Koopman eigenfunctions of { Ornstein-Uhlenbeck processes obtained as a solution of the Ornstein-Uhlenbeck SDE \cite[Section 4.4]{Pavliotis}}) and thus can be deduced from the principal eigenfunctions. This reduces the problem to analysis of principal eigenfunctions, and thus $d$ objects in an $d$ dimensional space, remarkable for a nominally infinite-dimensional representation.
\end{rem}

\section{Numerical approximations of the stochastic Koopman operator} \label{sec:NumSKO}
One of the goals of this work is to compute the numerical approximations of the spectral objects of the stochastic Koopman operators, i.e., their eigenvalues and eigenfunctions. This is performed by extending the DMD algorithms, that are originally developed for approximating the spectral object of the Koopman operators in deterministic settings, to the stochastic framework. { DMD algorithms and the spectral analysis of the Koopman operator are connected in the following way. Suppose that the restriction of the infinite dimensional stochastic Koopman operator $\mathcal{K}^{t}$ to an appropriate $n$ dimensional subspace of functions is closed under the action of the Koopman operator (see \cite{Drmac2017}). Let $\mathbb{K} \in \mathbb{C}^{n \times n}$ denote its representation by an $n \times n$ dimensional matrix. The goal of the DMD numerical algorithms is to determine the spectrum and associated eigenvectors of the finite dimensional linear operator $\mathbb{K}$, and those are in turn the eigenvalues and coordinates of eigenfunctions of the associated Koopman operator in the chosen finite-dimensional basis. If the considered subspace is not closed under the action of the Koopman operator, one can expect that under certain assumptions, the operator $\mathbb{K}$ at least approximates the underlying dynamics \cite{ArbabiMezic2016,KordaandMezic:2017}. }

{ The DMD algorithm provide us with  a decomposition of the pair $(\X_m,\Y_m)$ given by the eigenvalues and the eigenvectors of the operator $\mathbb{K}$, which is in the data-driven settings not known explicitly, while it is known that its action on the range $\X_m$ should be equal to  $\Y_m$ \cite{Tu2014jcd}. For $n \gg m$ and full column range of $\X_m$ there exists an exact solution of the equation $\Y_m = \mathbb{K} \X_m$, while for $m \gg n$ the equation is satisfied in a least squares sense \cite{Tu2014jcd}.}

Here we use the enhanced DMD algorithm, proposed in \cite{Drmac2017}, which we denote hereafter by DMD RRR. We briefly describe the algorithm and propose two approaches for applying it to RDS: the standard DMD approach using snapshot pairs, and the DMD applied to the stochastic Hankel matrix (sHankel-DMD), where time-delayed snapshots are used.

\subsection{The DMD algorithms for RDS}\label{subsec:DMDRRR}
Let $\f = (f_1,\ldots,f_n)^{T} : M \rightarrow \mathbb{C}^{n}$ be a vector valued observable on the state space. For an $\x \in M$, let $\f^{k}(\omega,\x) = \f (T^{k}(\omega, \x)), k=0,1,\ldots$  be a vector-valued observable series on the trajectory of the considered discrete-time RDS. Denote its expectation by $\f^{k}(\x) = \mathbb{E}[\f^{k}(\omega,\x)]=\mathcal{K}^{k} \f(\x)$.
For the continuous-time RDS $\varphi$, we choose the time step $\Delta t$ and define the values of the observables in the same way as in the discrete-time case by taking  $T^{k}(\omega,\x)=\varphi(k \Delta t,\omega) \x$. 
The value of the observable at the time moment $t_k= k \Delta t$ is evaluated as $\f^{k}(\x) = \mathcal{K}^{k}_{\Delta t} \f(\x) = \mathbb{E}[\f(\varphi(k \Delta t,\omega) \x)$. 

{
	In this stochastic framework, the DMD algorithm is applied to the matrices { $\X_m, \Y_m \in \mathbb{C}^{n \times m}$, which are for the chosen initial states $\x_1,\ldots,\x_m \in M$} defined by 
	\begin{equation}\label{eq:DMDRRR_XY_1}
	\X_m = \begin{pmatrix} \f^{0}(\x_1) & \f^{0}(\x_2) & \ldots &\f^{0}(\x_m) \end{pmatrix} \ \text{and} \ 
	\Y_m = \begin{pmatrix} \f^{k}(\x_1)  & \f^{k}(\x_2)& \ldots & \f^{k}(\x_m) \end{pmatrix}.
	\end{equation} 
	In this case we expect that the DMD algorithm provides us with the eigenvalues and eigenfunctions of the finite dimensional approximation operator associated with the Koopman operator $\mathcal{K}^{t_k}$. In the case when the Koopman operator family is a semigroup, we have $\mathcal{K}^{k}_{\Delta t}=(\mathcal{K}^{1}_{\Delta t})^{k}$, thus it makes sense to apply the DMD algorithm to the time-delayed snapshots, which means that for the chosen initial condition $\x_0$, we define the matrices $\X_m$ and $\Y_m$ as 
	\begin{equation}\label{eq:DMDRRR_XY_2}
	\X_m = \begin{pmatrix} \f^{0}(\x_0) & \f^{1}(\x_0) & \ldots &\f^{m-1}(\x_0) \end{pmatrix} \quad \text{and} \quad \Y_m = \begin{pmatrix} \f^{1}(\x_0) & \f^{2}(\x_0) & \ldots &\f^{m}(\x_0) \end{pmatrix}.
	\end{equation}  
	In this case, we expect that the algorithm provide us with the eigenvalues and eigenfunctions of the approximation operator associated with $\mathcal{K}^{1}_{\Delta t}$.
	The proof of the convergence of the DMD type algorithm with input matrices (\ref{eq:DMDRRR_XY_2}) where $f_1,\ldots,f_n$ span the finite dimensional invariant subspace of the stochastic Koopman operator, to its eigenvalues and eigenfunctions is given in \cite{TakeishiKawaharaYairi} for the RDS in which the noise is modeled by i.i.d. random variables and under the assumption of ergodicity.} 

We describe now the crucial steps of the DMD RRR algorithm. The DMD RRR algorithm starts in the same way as other SVD-based DMD algorithms, i.e.,~with the SVD decomposition for low dimensional approximation of data: $\X_m = \U \boldsymbol{\Sigma} \V^{*} \approx \U_r \boldsymbol{\Sigma}_r \V_r^{*}$, where $\boldsymbol{\Sigma} = \mbox{diag} (\left(\sigma_i \right)_{i=1}^{\min(m,n)}$,   $\sigma_i$ are singular values  arranged in the descending order, i.e. $\sigma_1 \ge \cdots \ge \sigma_{\min(m,n)} \ge 0$, and $r$ is the dimension of the approximation space. Then { $\Y_m = \mathbb{K} \X_m $} can be approximated as
\begin{equation}\label{eq:SVD}
\Y_m \approx \mathbb{K} \U_r \boldsymbol{\Sigma}_r \V_r^{*}.
\end{equation}
Since $\mathbb{K} \U_r = \Y_m \V_r \boldsymbol{\Sigma}_r^{-1}$, the { Rayleigh quotient matrix} $\mathbf{S}_r = \U_r^{*} \mathbb{K} \U_r$ with respect to the range $\U_r$ can be computed in this data-driven setting as  
\begin{equation}\label{eq:RayleighQ_S}
{ \mathbf{S}_r =  \U_r^{*} \Y_m \V_r \boldsymbol{\Sigma}_r^{-1}.}
\end{equation}
Each eigenpair $(\lambda, \bw)$ of $\mathbf{S}_r$ generates the corresponding Ritz pair $(\lambda, \U_r \bw)$, that is a candidate for the  approximation of the eigenvalue and eigenvector of the Koopman operator. 

Here we emphasize few crucial points at which the DMD RRR algorithm is improved in comparison with the standard DMD algorithms \cite{schmid:2010,Tu2014jcd}. The first point of difference refers to the dimension $r$ of the reduction space. Instead of defining the dimension of the space a priori or to take into account the spectral gap  in singular values, it is proposed in \cite{Drmac2017} to take into account user supplied tolerance $\varepsilon$, which is then used for defining $r$ as the largest index satisfying $\sigma_r \ge \sigma_1 \varepsilon.$ 
The algorithm is, according to \cite{Drmac2017}, further enhanced with the residual computation for each Ritz vector pair $(\lambda, \U_r \bw)$ 
\begin{equation}\label{eq:DMDresidual}
{	\eta = \|\mathbb{K}(\U_r \bw) - \lambda (U_r \bw)\|_2 = \| (\Y_m \V_r \boldsymbol{\Sigma}_r^{-1})\bw - \lambda (U_r \bw)\|_2,}
\end{equation}
where $\| \cdot \|_2$ stands for $L_2$ norm. Then the vectors at which the required accuracy is not attained are not taken into account. The final improvement compared to the standard algorithms refers to scaling of the initial data, i.e., if the matrix $D_x$ is defined with $D_x=\mbox{diag}(\|\X_m(:,i)\|_2)_{i=0}^{m-1}$, we set $\X_m = \X_m^{(1)} D_x$  and $\Y_m = \Y_m^{(1)} D_x$ and proceed with $\X_m^{(1)}$ and $\Y_m^{(1)}$ as data matrices.

\subsection{Stochastic Hankel DMD (sHankel-DMD) algorithm}

Here we define the stochastic Hankel matrix and describe the application of the DMD type algorithm on it.  
We use the Hankel DMD method described in \cite{ArbabiMezic2016} for deterministic dynamical system and expand the ideas to the stochastic settings. { We first limit to the discrete time RDS and assume that the associated stochastic Koopman operator family satisfies the semigroup property, since only in such case the results of the DMD algorithm applied to the Hankel matrix are meaningful. Otherwise, the situation similar to the nonautonomous deterministic case arise and the application of the DMD algorithm to the time-delayed snapshots becomes an attempt to approximate different operators, which results with significant errors \cite{MacesicZicMezic}.} 

The Hankel matrix in the stochastic framework is defined as follows. 
For a scalar observable $f : M \rightarrow \mathbb{C}$, we define the vector of $n$ observations along the trajectory that starts at $\x \in M$ and is generated by the one step discrete random map $T$:
\begin{equation}\label{eq:fn_xw}
{\f_n (\omega, \x) = \left( f(\x), f(T(\omega,\x)), \ldots,  f (T^{n-1}(\omega,\x)) \right)^{T}.}
\end{equation}
{
	Let $\f_n^{k}(\x)$, $k= 0,1,\ldots$ denote the expectation of $\f_n (\theta(k)\omega, T^{k}(\omega, \x))$ over the trajectories of length $k$, i.e., 
	\begin{align}\label{eq:Efn_x}
	\f_n^{k}(\x) &= \mathbb{E} \left[ \f_n (\theta(k)\omega, T^{k}(\omega, \x)) \right] \nonumber \\
	&=\left( \mathcal{K}^{k} f(\x), \mathcal{K}^{k} f (T(\omega,\x)), \ldots, \mathcal{K}^{k}  f(T^{n-1}(\omega,\x)) \right)^{T}. 
	\end{align}
	Observe that the components of $\f_n^{k}(\x)$ are the values of the function $\mathcal{K}^{k} f$ along the trajectory of length $n$ starting at $\x \in M$.
}

{ The stochastic Hankel matrix of dimension $n \times m$, associated with the trajectories starting at $\x \in M$ and generated by the map $T$ is defined by
	\begin{align}\label{eq:Hankel_sto1}
	& \mathbf{H}_{n \times m}(\omega,\x)  = \left(\f_n^{0}(\x) \ \ \f_n^{1}(\x) \ \ldots \ \f_{n}^{m-1}(\x)\right) 
	\nonumber \\
	& = 
	\left( \begin{array}{cccc}
	f(\x)     &  \mathcal{K}^{1} f(\x) & \ldots  & \mathcal{K}^{m-1} {f(\x)} \\ 
	f(T(\omega,\x))& \mathcal{K}^{1} f(T(\omega,\x)) &  \ldots  & \mathcal{K}^{m-1}f(T(\omega,\x)) \\ 
	\vdots & \vdots & \ddots &  \vdots \\ 
	f(T^{n-1}(\omega,\x))& \mathcal{K}^{1} f(T^{n-1}(\omega,\x)) &\ldots & \mathcal{K}^{m-1} {f(T^{n-1}(\omega,\x)) }
	\end{array} \right). 
	\end{align}}
The {columns} of $\mathbf{H}_{n \times m}$ are approximations of functions in the  Krylov subspace 
\begin{equation}\label{eq:KrylovSubspace}
\mathbb{K}_m(\mathcal{K},f)={\mbox{span}}\begin{pmatrix}
f, & \mathcal{K}^{1} f, & \ldots, & \mathcal{K}^{m-1}  f
\end{pmatrix},
\end{equation} obtained by sampling the values of functions $\mathcal{K}^{k}  f$, $k=0,\ldots,m-1$ along the trajectory of length $n$ starting at $\x \in M$. 

{
	When the DMD algorithm is applied to the stochastic Hankel matrix, the input data matrix $\X_m$ is defined by taking the first $m$ columns of the Hankel matrix $\mathbf{H}_{n \times (m+1)}(\omega, \x)$, while the data matrix $\Y_m$ is formed from the last $m$ columns of the same matrix. We refer to this methodology as to the stochastic Hankel DMD (sHankel-DMD) algorithm. }

As already mentioned, when we consider the continous-time RDS, we could for the chosen $\Delta t$ associate to it a discrete RDS by defining the one-step map as $T(\omega,\x)=\varphi(\Delta t, \omega) \x.$ Then the Koopman operator $\mathcal{K}^{k}$ should be replaced with the operator $\mathcal{K}_{\Delta t}^{k}$.

\subsection{Convergence of the sHankel-DMD  algorithm}
It was proved in  \cite{ArbabiMezic2016} that for ergodic systems,  under the assumption that observables are in an invariant subspace of the Koopman operator, the eigenvalues and eigenfunctions obtained by the extended DMD algorithm applied to Hankel matrix, converge to the true Koopman eigenvalues and eigenfunctions of the considered system. The convergence of the DMD algorithm with input matrices $\X_m$ and $\Y_m$ defined by (\ref{eq:DMDRRR_XY_2}) to the eigenvalues and the eigenfunctions of the Koopman operator is proved in \cite{TakeishiKawaharaYairi} for the class of RDS in which the noise is modeled by i.i.d. random variables, under the assumption of ergodicity, and of the existence of the finite dimensional invariant subspace. Here we prove that under the same assumptions, {the convergence is accomplished for the sHankel-DMD algorithm}. Our proof is based on the fact that the eigenvalues and the eigenfunctions obtained by DMD algorithm correspond to the matrix that is similar to the companion matrix, which represents the finite dimensional approximation of the Koopman operator in the Krylov basis. We limit the considerations to the discrete-time RDS. 
{
	Suppose that the dynamics on the compact invariant set $A \subseteq M$ is generated by the measure preserving  map $T(\omega, \cdot) : A \rightarrow A$ for each $\omega \in \Omega$. We recall from \cite[Section 1.4]{Arnold} that a probability measure $\nu$ is invariant for RDS $\varphi(n,\omega) = T^{n}(\omega,\cdot)$ if it is invariant for the skew product flow $\Theta(n)(\omega,x) = (\theta(n) \omega, T^{n}(\omega,x))$ generated by $T$ and $\theta(t)$, i.e., if $\Theta(n) \nu = \nu$  and if $\pi_\Omega \nu = \mathbb{P}$ where $\pi_\Omega$ denotes the canonical projection $\Omega \times A \rightarrow \Omega$. Invariant measures always exist for a continuous RDS on compact space $A$ (see \cite[Theorem 1.5.10]{Arnold}. If $A$ is a Polish space, the measure $\nu$ on $\Omega \times A$ could be written as product of measures, i.e., $d \nu (\omega,x)= d \mu_\omega(x) d\mathbb{\mathbb{P}}(\omega)$, i.e., for $f \in L^{1}(\nu)$
	$$\int_{\Omega \times A} f d\nu = \int_\Omega \int_A f(\omega,x) d\mu_{\omega}(x) d\mathbb{P}(\omega).$$
	If the skew product dynamical system $\Theta$ is ergodic on $\Omega \times A$ with respect to some invariant measure $\nu$ and if $\theta(n)$ is ergodic with respect to $\mathbb{P}$, we say that $\varphi$ is ergodic with respect to the invariant measure $\nu$. 	
	Under the assumption of ergodicity of $\Theta$ with respect to the measure $\nu$, the Birkhoff's ergodic theorem states that the time average of observable $f \in L^{2}(\Omega \times A; \nu)$ under $\Theta$ is given by
	\begin{equation}\label{eq:BirkErgTh0}
	\lim_{n \rightarrow \infty} \frac{1}{n} \sum_{k=0}^{n-1}  f (\theta(k) \omega, T^{k} (\omega,\x))  = \int_{\Omega \times A} f(\omega,x) d\nu , \quad \textrm{a.~e. on} \ \ \Omega \times A. 
	\end{equation}   
}

The following proposition shows that under the assumption of ergodicity and Markovian property of RDS, the sHankel-DMD algorithm provide us with approximations of eigenvalues and eigenfunctions that converge to the true Koopman eigenvalues and eigenfunctions.
\begin{propos}
	Suppose that the dynamics on the compact invariant set $A \subseteq M$ is given by the one step map $T(\omega, \cdot) : A \rightarrow A$ for each $\omega \in \Omega$ { and that the associated discrete time RDS $\varphi$ is ergodic with respect to some invariant  measure $\nu$. Assume additionally that the processes $\{\varphi(n,\omega)\x, \x \in A \}$ form a Markov family. Denote by $\mu$ the marginal measure $\mu = \mathbb{E}_\mathbb{P}(\nu)$ on $A$. }
	
	Let the Krylov subspace $\mathbb{K}_m(\mathcal{K}, f)$ span an $r$-dimensional subspace of the Hilbert space $\mathcal{H} = L^{2}(A, \mu)$, with $r < m$, invariant under the action of the stochastic Koopman operator. Then for almost every $\x \in A$, as $n \rightarrow \infty$, the eigenvalues and eigenfunctions obtained by applying DMD algorithm to the first $r+1$ columns of the $n \times (m+1)$ dimensional stochastic Hankel matrix, converge to the true eigenvalues and eigenfunctions of the stochastic Koopman operator.
\end{propos}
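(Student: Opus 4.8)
My plan is to reduce the DMD output to a companion-matrix computation and then prove entrywise convergence of that companion matrix by means of the Birkhoff ergodic theorem (\ref{eq:BirkErgTh0}).

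First I would set up the true (infinite-data) object. Write $g_k := \mathcal{K}^k f$ for $k=0,1,\ldots$; these are deterministic functions on $A$, and by assumption $g_0,\ldots,g_{r-1}$ are linearly independent in $\mathcal{H}=L^{2}(A,\mu)$ and span the $r$-dimensional invariant Krylov subspace. Invariance forces $g_r=\mathcal{K}^r f\in\mathrm{span}\{g_0,\ldots,g_{r-1}\}$, so there are unique coefficients $c_0,\ldots,c_{r-1}$ with $g_r=\sum_{j=0}^{r-1}c_j g_j$. These coefficients define the $r\times r$ companion matrix $C$ whose eigenvalues are exactly the stochastic Koopman eigenvalues on the subspace and whose right eigenvectors furnish the coordinates of the Koopman eigenfunctions in the basis $\{g_j\}$. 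Following the reasoning of \cite{ArbabiMezic2016}, the Ritz pairs returned by the DMD RRR algorithm are generated by a matrix similar to the finite-data companion matrix $C_n$ obtained by least-squares fitting of the last sampled column against the remaining ones; in the exact rank-$r$ limit the RRR refinements (tolerance-based rank selection, residual thresholding, scaling) are consistent with isolating this $r$-dimensional invariant subspace, so it suffices to show $C_n\to C$ and to transport the spectral data through the similarity.

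The heart of the proof is this convergence. Let $y_i=T^i(\omega,\x)$, $i=0,\ldots,n-1$, be the realized trajectory; by the semigroup and Markov properties the $k$-th column of the stochastic Hankel matrix (\ref{eq:Hankel_sto1}) is precisely $g_k=\mathcal{K}^k f$ sampled along this trajectory, so the data matrix $\X_r^{(n)}$ has columns $(g_j(y_0),\ldots,g_j(y_{n-1}))^{\transpose}$, $j=0,\ldots,r-1$, and the fitted column is $g_r^{(n)}=(g_r(y_0),\ldots,g_r(y_{n-1}))^{\transpose}$. The least-squares coefficients are $\mathbf{c}^{(n)}=\big(\tfrac1n(\X_r^{(n)})^{*}\X_r^{(n)}\big)^{-1}\,\tfrac1n(\X_r^{(n)})^{*}g_r^{(n)}$. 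Each entry of $\tfrac1n(\X_r^{(n)})^{*}\X_r^{(n)}$ is a time average $\tfrac1n\sum_{i=0}^{n-1}\overline{g_p(y_i)}\,g_q(y_i)$ of the state-dependent function $\overline{g_p}\,g_q$; applying (\ref{eq:BirkErgTh0}) for the skew product $\Theta$, valid because $\overline{g_p}\,g_q\in L^{1}(\nu)$ and does not depend on $\omega$, this average converges for $\nu$-a.e.\ $(\omega,\x)$ to $\int_A\overline{g_p}\,g_q\,d\mu$, the $(p,q)$ entry of the Gram matrix $G$ of $g_0,\ldots,g_{r-1}$ in $\mathcal{H}$. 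Thus $\tfrac1n(\X_r^{(n)})^{*}\X_r^{(n)}\to G$ and analogously $\tfrac1n(\X_r^{(n)})^{*}g_r^{(n)}\to b$ with $b_p=\int_A\overline{g_p}\,g_r\,d\mu$. Since the $g_j$ are linearly independent in $\mathcal{H}$, the limiting Gram matrix $G$ is invertible, whence $\mathbf{c}^{(n)}\to G^{-1}b=\mathbf{c}$ and $C_n\to C$ entrywise (for $\mu$-a.e.\ $\x$, the exceptional set being a finite union of null sets over the finitely many pairs $(p,q)$).

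Finally I would transport this to the spectral objects. Eigenvalues depend continuously on matrix entries, so the DMD eigenvalues converge to those of $C$, i.e.\ to the true stochastic Koopman eigenvalues on the subspace; assuming the eigenvalues of $C$ are simple, the eigenvectors of $C_n$ converge to those of $C$, and the reconstructed eigenfunctions, formed as linear combinations of the sampled $g_j$ with the eigenvector coordinates, converge in $\mathcal{H}$ to $\sum_j v_j g_j$, the corresponding Koopman eigenfunctions. The main obstacle is precisely the convergence step for the single-trajectory time averages: unlike the deterministic Hankel-DMD argument, one must combine the Markov and semigroup structure (so that the Hankel columns are genuinely the functions $g_k=\mathcal{K}^k f$ rather than noisy single-path samples) with ergodicity of the skew-product flow to identify the limits of the empirical Gram and projection data with the correct $L^{2}(\mu)$ inner products, and to guarantee invertibility of $G$ in the limit.
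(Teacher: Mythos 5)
Your proposal is correct and follows essentially the same route as the paper's own proof: reduce the DMD output to a least-squares companion matrix, use the Birkhoff ergodic theorem for the skew product to show the empirical Gram matrix and projection vector converge to their $L^{2}(A,\mu)$ counterparts (hence $\mathbf{c}^{(n)}\to\mathbf{c}$), and conclude by continuity of the roots of the characteristic polynomial together with the similarity of the DMD Rayleigh-quotient matrix to the companion matrix. Your explicit remarks that invertibility of $G$ follows from linear independence of $f,\ldots,\mathcal{K}^{r-1}f$ and that simple eigenvalues are needed for eigenvector convergence are slightly more careful than the paper, but the argument is the same.
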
 
\begin{proof}
	Consider the observables $f : A \rightarrow \mathbb{R}$ belonging to the Hilbert space $\mathcal{H} = L^{2}(A,\mu)$. Due to the ergodicity of $\Theta$, in accordance to the Birkhoff's ergodic theorem, (\ref{eq:BirkErgTh0}) is valid, thus we get
	{
		\begin{equation}\label{eq:BirkErgTh3}
		\lim_{n \rightarrow \infty} \frac{1}{n} \sum_{k=0}^{n-1}  f(T^{k} (\omega,\x))  = \!\! \int_{\Omega \times A} \!\!\!\!\!\!  f(x) d\nu = \int_\Omega \int_{A} \!\! f(x) d\mu_\omega(x) d\mathbb{P}(\omega) = \int_A \!\! f d\mu,
		\end{equation}   
		where the last equality follows from the fact that $\mu = \mathbb{E}_\mathbb{P}(\nu)=\mathbb{E}_\mathbb{P}(\mu_\omega)$.}
	For observables $f, g \in \mathcal{H}$ let the vectors of $n$ observations along the trajectory starting at $\x \in A$ of the RDS generated by the map $T$ be denoted by $\f_{n} (\omega, \x)$ and $\g_{n} (\omega, \x)$ and defined by (\ref{eq:fn_xw}).  If we denote the data-driven inner product by $ < \f_{n}(\omega, \x), \g_{n}(\omega, \x) >$, we have
	\begin{align} 
	& \lim_{n \rightarrow \infty}  \frac{1}{n} \, \left[< \f_{n}(\omega, \x), \g_{n}(\omega, \x) >\right] = \lim_{n \rightarrow \infty} \frac{1}{n} \, \sum_{k=0}^{n-1} { f ( T^{k} (\omega,\x) ) g^{*} ( T^{k} (\omega,\x))  } \nonumber \\
	& = \lim_{n \rightarrow \infty} \frac{1}{n} \sum_{k=0}^{n-1} 
	f g^{*} \left(T^{k} (\omega,\x)\right)  = \int_{A} f g^{*} d\mu = <f , g >_\mathcal{H} \text{  for a.e. \x} \label{eq:Hankel_limit}
	\end{align}   
	with respect to the measure $\mu$. Using the assumption that $\mathbb{K}_m(\mathcal{K},f)$ spans $r$-dimensional subspace of $\mathcal{H}$, which is invariant for the stochastic Koopman operator, the restriction of the Koopman operator to this subspace is finite dimensional and can be realized by an $r \times r$ matrix. {The representation of this matrix in the basis formed by the functions $\begin{pmatrix} f, & \mathcal{K} f, & \ldots, & \mathcal{K}^{m-1}f \end{pmatrix}$ is given with the companion matrix
		\begin{equation}\label{eq:CompanionMatrix}
		{\bf C}=\begin{pmatrix}
		0 & 0 & \ldots & 0 & {c}_0 \\
		1 & 0 & \ldots & 0 & {c}_1 \\
		0 & 1 & \ldots & 0 & {c}_2 \\
		\vdots & \vdots & \ddots & \vdots & \vdots \\
		0 & 0 & \ldots & 1 & {c}_{r-1} \\
		\end{pmatrix},
		\end{equation}
		where the vector ${\bf c} = \begin{pmatrix} c_0, & c_1, & \ldots, & c_{r-1} \end{pmatrix}^{T}$, obtained by using least squares approximation is equal to
		\begin{equation}\label{eq:c}
		{\bf c} = {\bf G}^{-1} \begin{pmatrix} <f, \mathcal{K}^{r} f>_\mathcal{H}, & <\mathcal{K}^{1}, \mathcal{K}^{r} f>_\mathcal{H}, & \ldots, & <\mathcal{K}^{r-1}, \mathcal{K}^{r} f>_\mathcal{H} \end{pmatrix}^{T}.
		\end{equation}
		Here ${\bf G}$ denotes the Gramian matrix with elements $G_{ij} = <\mathcal{K}^{i-1}, \mathcal{K}^{j-1} f>_\mathcal{H}\nolinebreak,$ $i,j = 1,\ldots,r$ (see \cite{ArbabiMezic2016,Drmac2017}).
		
		Consider now the stochastic Hankel matrix $\mathbf{H}_{n \times (r+1)}(\omega,\x)$ of dimension $n \times (r+1)$ along a trajectory starting at $\x$ and the companion matrix algorithm \cite{ArbabiMezic2016,Drmac2017} applied to the matrices  
		$\X_r = \left( \f_{n}^{0}(\x) \ \f_{n}^{1}(\x)  \ \ldots  \ \f_{n}^{r-1}(\x) \right)$ 
		and   $\Y_r = \left(\f_{n}^{1}(\x) \ \f_{n}^{2}(\x)  \ \ldots  \ \f_{n}^{r}(\x) \right)$.  We denote by $\tilde{\bf C}$ the numerical companion matrix 
		computed as a best approximation of the least squares problem $\tilde{\bf C} = \mbox{arg min}_{\mathbf{B} \in \mathbb{C}^{r \times r}} \| \Y_r - \X_r \mathbf{B} \|$ \cite{Drmac2017}. 
		Using the assumption that the matrix $\X_r$ has a full column rank,} the pseudoinverse is of the form $\X_r^{\dag} = (\X_r^{*} \X_r)^{-1} \X_r^{*}$ and the matrix $\tilde{\bf C}$ is 
	\begin{align}\label{eq:MPinverse}
	\tilde{\bf C} &= \X_r^{\dag}  \Y_r   = (\X_r^{*}\X_r )^{-1}  \X_r^{*} \Y_r   \nonumber \\
	&=   \left(\frac{1}{n} \X_r^{*} \X_r \right)^{-1} \left( \frac{1}{n}  \X_r^{*} \Y_r  \right) =  \tilde{\bf G}^{-1}  \left( \frac{1}{n}  \Y_r \X_r^{*}   \right) . 
	\end{align}
	{
		Here $\tilde{\bf G}$ denotes the numerical Grammian matrix whose elements are equal to
		\begin{align}\label{eq:Gij_element}
		\tilde{G}_{ij}(\omega,\x) & = {\textstyle\frac{1}{n}} < \f_{n}^{i-1}(\x), \f_{n}^{j-1}(\x)> \nonumber \\
		&=\frac{1}{n} \sum_{k=0}^{n-1} 
		\mathcal{K}^{i-1}f (T^{k} (\omega,\x)) \mathcal{K}^{j-1}f^{*} (T^{k} (\omega,\x)) \nonumber \\
		&= \frac{1}{n} \sum_{k=0}^{n-1} 
		(\mathcal{K}^{i-1}f)(\mathcal{K}^{j-1}f^{*}) ( T^{k} (\omega,\x)), \ i,j=1,\ldots,r.
		\end{align} 
		Now, by using (\ref{eq:Hankel_limit}), we conclude that
		\begin{equation}\label{eq:Gijlimit}
		\lim_{n \rightarrow \infty} \tilde{G}_{ij}(\omega,\x) = <  \mathcal{K}^{i-1} f,  \mathcal{K}^{j-1} f >_\mathcal{H}, \quad i,j = 1,\ldots,r \quad \text{for a.e. } \x
		\end{equation} 
		with respect to the measure $\mu$. From (\ref{eq:MPinverse}) we get that the last column $\tilde{\mathbf{c}} = \left( \tilde{c}_0(\omega,\x), \ \tilde{c}_1(\omega,\x), \ \ldots, \ \tilde{c}_{r-1}(\omega,\x) \right)^{T}$ of $\tilde{\bf C}$ is equal to 
		\begin{equation}\label{eq:ctilda}
		\tilde{\mathbf{c}} = \tilde{\bf G}^{-1}   {\textstyle\frac{1}{n}} 
		\left( < \f_{n}^{0}(\x),\f_{n}^{r}(\x)>, \ < \f_{n}^{1}(\x),\f_{n}^{r}(\x)>, \ \ldots, \ < \f_{n}^{r-1}(\x),\f_{n}^{r}(\x)> \right)^{T}.
		\end{equation}
		Since
		\begin{equation}\label{eq:cjlimit}
		\lim_{n \rightarrow \infty} < \f_{n}^{j-1}(\x), \f_{n}^{r}(\x)> = <  \mathcal{K}^{j-1} f, \mathcal{K}^{r} f >_\mathcal{H} \quad j = 1,\ldots,r,  \quad \text{for a.e. } \x
		\end{equation} 
		with respect to the measure $\mu$
		and the matrix $\tilde{\bf G}^{-1}$ converges to the inverse of the true Grammian matrix ${\bf G}^{-1}$, it follows that the components of $\tilde{\mathbf{c}}$ converge to the components of ${\bf c}$ (\ref{eq:c}). } As in {\cite[Proposition 3.1]{ArbabiMezic2016}}, we use now the fact that the eigenvalues of the matrix $\tilde{\bf C}$ are the roots of a polynomial having coefficient $\tilde{c}_i$. Therefore the eigenvalues of  $\tilde{\bf C}$ are continuously dependent on these coefficients, which implies that the eigenvalues of the companion matrix $\tilde{\bf C}$ converge to the eigenvalues of the exact companion matrix, i.e., to the exact eigenvalues of the stochastic Koopman operator.
	{ The eigenvalues and eigenvectors provided by DMD algorithm are computed from the eigenvalues and eigenvectors of the matrix ${\bf S}_r$ (\ref{eq:RayleighQ_S}), which is similar to the companion matrix $\tilde{\bf C}$ (see \cite[Proposition 3.1]{Drmac2017}), thus the conclusion follows.} 
\end{proof}

\color{black}
\section{Numerical examples}\label{sec:NumExamples}
{ In this section we illustrate the computation of the approximations of spectral objects of the stochastic Koopman operator on a variety of numerical examples. In all the examples we use the proposed methodologies in combination with the DMD RRR algorithm.} { First, we consider two discrete RDS examples: the Example \ref{nrot} and the linear RDS. Due to the fact that in the first case the semigroup property for the Koopman operators family is valid, the input matrices of DMD algorithm are defined by (\ref{eq:DMDRRR_XY_2}). We show that good approximations of the eigenvalues and eigenfunctions are obtained. In the second linear case, the DMD algorithm is applied on input matrices (\ref{eq:DMDRRR_XY_1})  and the differences between the exact and computed eigenvalues are determined. In Subsection \ref{sec:Numerics_ContLinearRDS} the approximation of time dependent principal eigenvalues for the continuous-time RDS generated by the linear RDE are determined by applying DMD RRR algorithm on data matrices defined by (\ref{eq:DMDRRR_XY_1}) where $\Y_m$ changes in time. In these two linear test examples, discrete and continuous one,  the solutions do not form Markov family, so that the associated stochastic Koopman operator family is not a semigroup. In Subsection \ref{sec:Numerics_SDE} we apply DMD RRR algorithm on input matrices defined by (\ref{eq:DMDRRR_XY_1}) and (\ref{eq:DMDRRR_XY_2}) to the RDS generated by scalar linear and scalar nonlinear SDE and approximate its eigenvalues and eigenfunctions. Finally, we apply the s-Hankel DMD algorithm to two RDS generated by SDE systems (stochastic Stuart-Landau and noisy Van der Pol system) whose solutions converge to the asymptotic limit cycles and to the RDS generated by noisy Lotka-Volterra system with an asymptotically stable equilibrium point. In most considered examples we first apply the DMD RRR algorithm to the related deterministic dynamical system (without noise) and then explore its behavior on the RDS.}

\subsection{Discrete RDS examples}\label{sec:Numerics_DiscreteRDS}

\subsubsection{Noisy rotation on the circle}
Consider the dynamical system defined in Example \ref{nrot}.  
We use the following set of observables: \[f_j(x) = \cos(j 2 \pi x), g_j(x) = \sin(j 2 \pi x),\ j=1,\ldots,n_1,\] arranged in the vector valued observable as \[\f=(f_1,\ldots,f_{n_1},g_1,\ldots,g_{n_1})^{T}.\]
{ The input matrices $\X_m$ and $\Y_m$ of the  DMD RRR algorithm are defined by (\ref{eq:DMDRRR_XY_2}). We consider both cases: deterministic and stochastic. In deterministic case, it follows from \cite{Tu2014jcd}, that for $m \gg n$, the matrix obtained by DMD algorithm is the approximation of the associated Koopman operator in a least square sense. Therefore, in this example, due to the ergodicity and as a consequence of the law of large numbers, as $m \rightarrow \infty$,  the obtained values provided by DMD algorithm converge to the eigenvalues and eigenvectors of the finite dimensional approximating matrix of the Koopman operator. According to \cite{TakeishiKawaharaYairi}, the algorithm converge in the stochastic case also to the eigenvalues and eigenfunctions of the stochastic Koopman operator. It is interesting to mention that to recover the spectrum of the stochastic Koopman operator in the considered ergodic system, it was enough to take in the matrices $\X_m$ and $\Y_m$ only the values of the observable vector on one long enough trajectory, i.e., to take $\X_m = (\f(\x_0) \ \f(T(\omega,\x_0)) \ldots  \f(T^{m-1}(\omega,\x_0)))$ and $\Y_m = (\f(T(\omega,\x_0)) \ \f(T^{2}(\omega,\x_0)) \ldots \f(T^{m}(\omega,\x_0)))$, where $\x_0$ denotes the chosen initial state.  }

We take $n_1=150$, which gives $n = 300$ observable functions, and use $m = 5000$ sequential snapshots to determine the matrices $\X_m$ and $\Y_m$. The numerical results for the flow with the  parameters $\vartheta = \pi/320$ and $\delta = 0.01$  obtained by using the DMD RRR algorithm are presented in Figure \ref{fig:rotation_detsto}. We note that the eigenvalues obtained in the deterministic case lie, as expected, on unit circle. The real parts of eigenfunctions $\phi_i(x)$, $i=1,2,3$, which we recover numerically, are presented in subfigure (c) of Figure \ref{fig:rotation_detsto}. They closely coincide with the theoretically established eigenfunctions given by (\ref{eq:SRotEF}). The results presented in subfigure (e) of Figure \ref{fig:rotation_detsto} show that numerically captured eigenvalues of the stochastic Koopman operator coincide very well with theoretical eigenvalues (\ref{eq:SRotEV}). In the stochastic case, the real parts of the eigenfunctions belonging to the first three eigenvalues $\lambda^{S}_i, i=1,2,3$ are presented in subfigure (f) of Figure \ref{fig:rotation_detsto}. As shown in Example 1, the eigenfunctions are given by (\ref{eq:SRotEF}). Two remarks are in order: 1) The deterministic and stochastic Koopman operators commute in this case, and thus the eigenfunctions are the same and the addition of noise does not perturb them (cf. \cite{Giannakis} where the numerically computed eigenfunctions in a similar commuting situation were not sensitive to the noise introduced in the system). 2) {The computed eigenvalues are the approximation of the eigenvalues associated with the  restriction of the  stochastic Koopman to the subspace spanned by the components of the vector observable. Since that subspace is invariant under the action of the Koopman operator, the eigenvalues are, as expected, computed with high accuracy.}

Results very similar to the presented ones were obtained when the observable vector is constructed with the observable functions: $f_j(x) = e^{j 2 \pi i x  }$ and $g_j(x) = e^{- j 2 \pi i x}$, $j=1,\ldots,n_1$.

\begin{figure}[!htb]
	\centering
	\includegraphics[width=1.\linewidth]{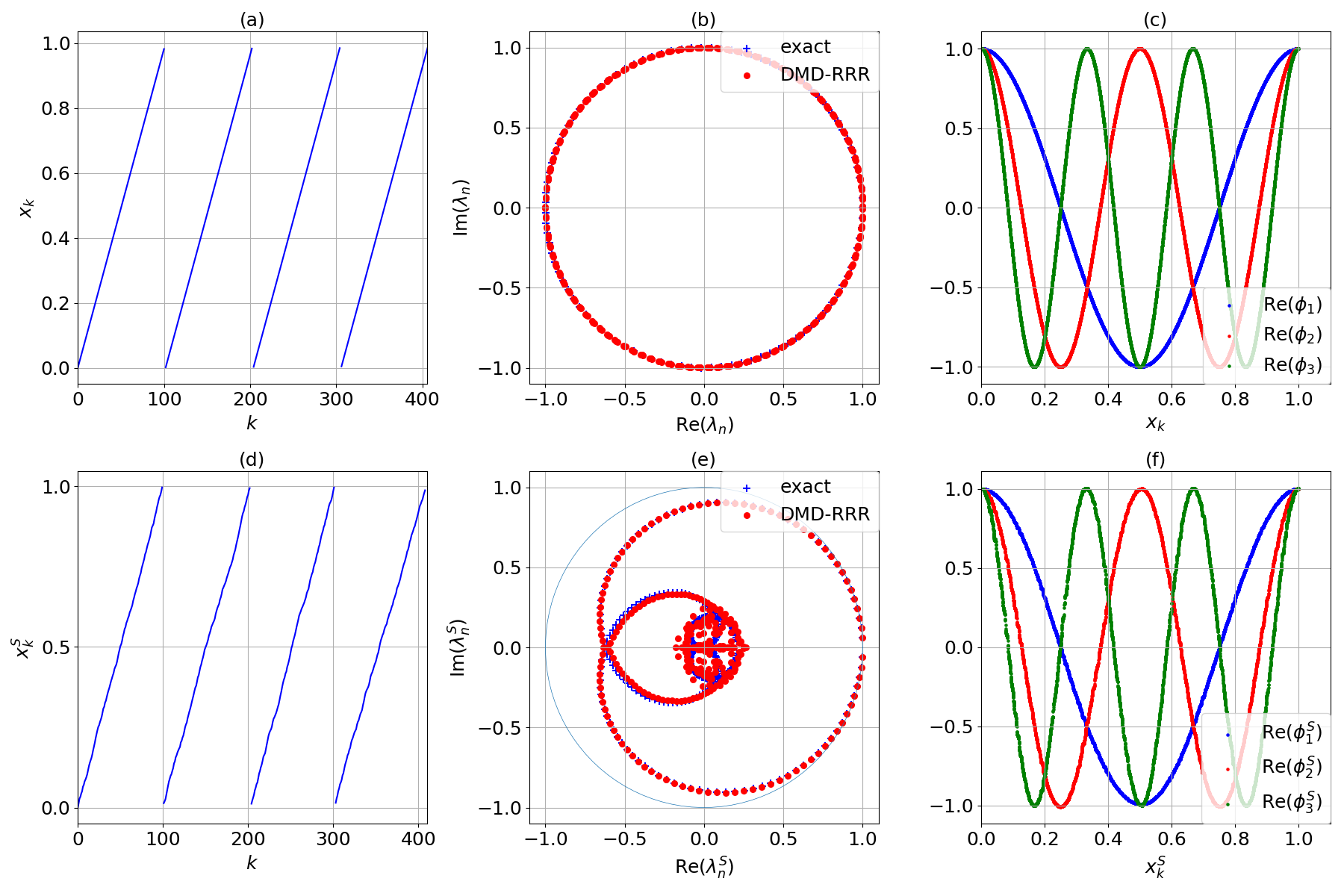}
	\caption{Rotation on circle defined by (\ref{eq:RotMap}) and (\ref{eq:RotStochMap}). Deterministic case $\theta=\pi/320$: (a) solution; (b) Koopman eigenvalues; (c) real part of eigenfunctions. Stochastic case $\theta=\pi/320$, $\delta=0.01$: (d) solution; (e) stochastic Koopman eigenvalues; (f) real part of eigenfunctions.}
	\label{fig:rotation_detsto}
\end{figure}

\subsubsection{Discrete linear RDS}
Here we consider the discrete linear RDS generated by a map (\ref{eq:LS_Discrete}) with 
\begin{equation}\label{eq:DTLRDSex_matrix}
\A(\omega) = 
\left(\begin{array}{cc}
0 & \pi(\omega) \\ 
-\pi(\omega) & 0
\end{array} \right),
\end{equation}
where $\pi$ is given by (\ref{eq:CanonProjection}). { We suppose that the dynamical system  $\theta=(\theta(t))_{t \in \mathbb{Z}^{+}}$ is defined by the shift transformations (\ref{eq:shift}).} 
The coordinates  $\omega_i$, $i\in \mathbb{Z}^{+}$ of $\omega\in\Omega$ are induced by i.i.d. random variables with the following distribution 
$$P(\omega_i = 1) = p_1, \ \ P(\omega_i=2) = 1-p_1, \qquad 0 < p_1 < 1.$$
As proved in Proposition \ref{LDRDSthm}, the principal eigenvalues of the stochastic Koopman operator are equal to the eigenvalues of the matrix $\hat{\A} = \mathbb{E}[\A(\omega)]$, while the associated eigenfunctions are given by (\ref{eq:LS_Discrete_ef}). We take $p_1=0.75$ and select $N = 10^{4}$ initial points uniformly distributed over $[0,1] \times [0,1]$. For every chosen initial point $\x_{j,0}, j=1,\ldots,N$, we determine the random trajectory $\x_{j,k}, k=1,2,\ldots$, where $\x_{j,k}$ denotes state value at $k$-th step.
The DMD RRR algorithm is applied to the full-state observables by taking states on each of the trajectory separately, i.e., we take as the algorithm input the matrices $\X_{m,j}, \Y_{m,j} \in \mathbb{R}^{2,m}$ defined by $\X_{m,j} = \left(\x_{j,0},\x_{j,1},\ldots,\x_{j,m-1}\right)$ and $\Y_{m,j} = \left(\x_{j,1},\x_{j,2},\ldots,\x_{j,m} \right)$, $j=1,\ldots,N$. In each computation { associated with initial value $\x_{j,0}$, we obtain the eigenvalue pair ${\lambda}^{(j)}_{1,2}$}, which should approximate the principal Koopman eigenvalues $\hat{\lambda}_{1,2}$. 
For fixed $m$ and for the chosen set of initial conditions, we get samples of approximating eigenvalues { ${\lambda}^{(j)}_{1,2}, j=1,\ldots,N$}. { To estimate the error,  for the obtained sets of approximating eigenvalues}, we evaluate the $L_1$, $L_2$ and $L_\infty$ norms of the difference between the exact eigenvalues and the computed eigenvalues. The norms determined for different values of parameter $m$ are presented in Figure \ref{fig:SSL_error}. One could notice that the accuracy of the obtained eigenvalues increases monotonically with the number of snapshots $m$ used in the computations, and that the error is  $\mathcal{O}(\frac{1}{\sqrt{m}})$, as would be expected for a random process of this type. { It is worth noticing that the norm of the state values $\|\x_{j,m}\|$ increases with $m$ and could become quite large for large values of $m$. Thus, the condition number of the input matrices of the DMD RRR algorithm could be very large. However, the scaling that is applied in this enhanced DMD RRR algorithm prevent the instabilities that otherwise could  be introduced if the standard DMD algorithm without scaling is applied.}

\begin{figure}[!htb]
	\centering
	\includegraphics[width=0.6\linewidth]{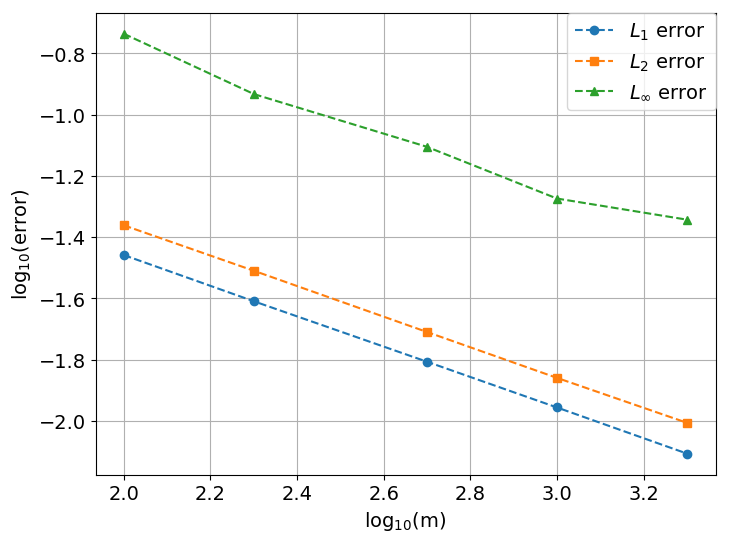}
	\caption{Discrete linear RDS defined by (\ref{eq:DTLRDSex_matrix}). $L_1$, $L_2$, and $L_\infty$ errors of approximated Koopman eigenvalues.}
	\label{fig:SSL_error}
\end{figure}

\subsection{Continuous-time linear RDS}\label{sec:Numerics_ContLinearRDS}
We consider now the continuous-time linear RDS that evolves according to (\ref{eq:LRDE}) where
\begin{equation}\label{eq:CTLRDSex_matrix}
\A(\omega) = 
\left(\begin{array}{cc}
\pi(\omega) & 1 \\ 
-b^2 & \pi(\omega)
\end{array} \right),
\end{equation}
{ $\theta(t)$ are shift transformations given by (\ref{eq:shift}), and $\pi$ is like in the previous test example given by (\ref{eq:CanonProjection}).
	It is supposed that the coordinates  $\omega(t)$, $t\in \mathbb{R}^{+}$ of $\omega\in\Omega$ are  piecewise constant of the form $\omega(t) = \sum_{i=1}^{\infty} \omega_i \mathbf{1}_{((i-1)\Delta t^{s}, i\Delta t^{s}]}(t)$, where $\Delta t^{s}$ is the chosen fixed time interval between switches and} $\omega_i$, $i \in \mathbb{Z}^{+}$ are i.i.d.~random variables with the following distribution
\begin{equation}\label{eq:wi-distribution}
P(\omega_i = a_1) = p_1, \ \ P(\omega_i=a_2) = 1-p_1, \qquad 0 < p_1 < 1.
\end{equation}
{ In the considered case, the obtained RDS is not Markovian and the stochastic Koopman operators  $(\mathcal{K}^{t})_{t \in \mathbb{T}}$ do not satisfy semigroup property.}  

{ We are interested in the numerical approximations of the eigenvalues of the associated stochastic Koopman operators. Therefore we apply the DMD RRR algorithm to the matrices $\X_m$ and $\Y_m$ defined using (\ref{eq:DMDRRR_XY_1}) and the full state observables. Using such approach, we expect to obtain the numerical approximations of the principal eigenvalues of the Koopman operators $\mathcal{K}^{t_k} = \mathcal{K}_{\Delta t}^{k}$, $k=0,1,2,\ldots$. Due to the switches of the matrices at time moments $i \Delta t_s$, we choose the time step $\Delta t$ so that $K \Delta t = \Delta t_s$ for some $K \in \mathbb{N}$.
	
	Since the matrices $A(\omega)$ commute and have simple eigenvalues, they are simultaneously   diagonalizable, so that according to Proposition \ref{thm2}, the stochastic Koopman eigenvalues of the operator $\mathcal{K}^{t}$ are equal $\lambda_j^{S}(t) = \mathbb{E}\left({\rm e}^{\int_{0}^{t} \lambda_j(\theta(s)\omega)  ds} \right)$, $j=1,2$. For large enough values of $t$, for example $t=N \Delta t^{s}$,
	after applying central limit theorem we get that the distribution of $\int_{0}^{t} \lambda_j(\theta(s)\omega)  ds = \sum_{i=1}^{N} \lambda_j(\omega_i) \Delta t^{s}$ is approximately normal with mean value $\hat{\lambda}_j(t) = \mathbb{E}(\lambda_j(\omega_i)) t$ and the variance $\hat{\sigma}_j^{2}(t) = V(\lambda_j(\omega_i)) \Delta t^{s} \,t$, where $V$ denotes variance.
	Here $\lambda_j(\omega_i), j=1,2$ are the eigenvalues of the matrix $\A(\omega_i)$, thus we get
	$$\hat{\lambda}_{1,2}(t)=(\hat{a}\pm bi)t, \quad \text{where    } \hat{a}={ p_1\, a_1+(1-p_1) a_2},$$
	and
	$$\hat{\sigma}_{1,2}^{2}(t) = p_1(1-p_1) (a_1-a_2)^{2} \Delta t^{s} t.$$
	It follows that ${\rm e}^{\int_{0}^{t} \lambda_j(\theta(s)\omega)  ds}$ is approximately log-normally  distributed with the mean value 
	\begin{equation}\label{eq:LognormalMean}
	\lambda_j^{S}(t) = \mathbb{E}\left({\rm e}^{\int_{0}^{t} \lambda_j(\theta(s)\omega)  ds} \right) = {\rm e}^{\hat{\lambda}_j(t) +\frac{1}{2}  \hat{\sigma}_{j}^{2}(t)}.
	\end{equation} }

{In the numerical computations, we take $a_1=-0.1$, $a_2=0.1$, $b=2$, and $\Delta t^{s}=\frac{\pi}{30}$.
	The numerical approximations of the expected values of the solution based on $N=100$ sampled trajectories and for the values of $p_1=0.75$, $p_1=0.5$, and $p_1=0.25$  are presented in Figure \ref{fig:linearrandomsolutions}. 
	By taking into account $m=100$ initial points, we approximate the eigenvalues of the Koopman operators $\mathcal{K}^{t_k}$, $t_k=k\Delta t$, where $\Delta t = \frac{\pi}{60}$. The relative error between the eigenvalues (\ref{eq:LognormalMean}) and the eigenvalues computed using DMD RRR algorithm is presented in Figure \ref{fig:linearRDS_computedeigenvalues}. One can conclude that the computations are stable since the relative errors in all three considered cases have similar magnitudes despite the fact that for $p_1=0.25$ the norm of the solution significantly increases. As in the previous example, this is a consequence of the scaling of data used in the DMD RRR algorithm. As expected, the variance of errors increase with time.     
}
\begin{figure}[tbh!]
	\centering
	\includegraphics[width=1.0\linewidth]{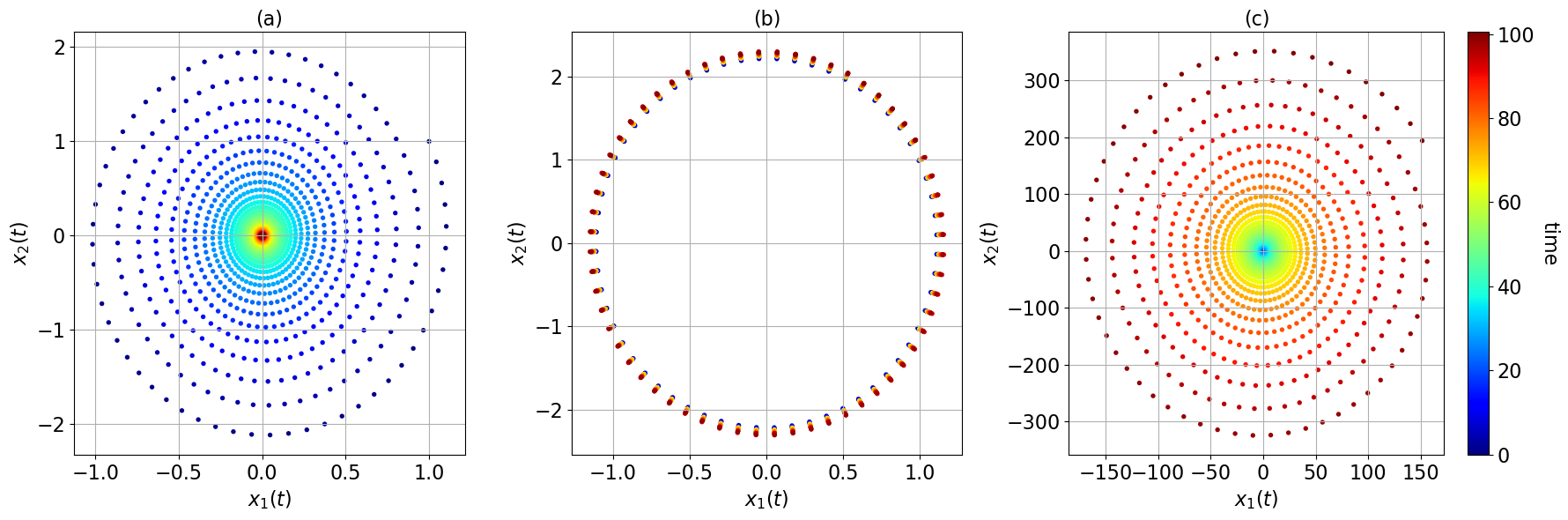}
	\caption{Numerical solutions $x^{S}_1(t)$, $x^{S}_2(t)$ of linear RDS defined by (\ref{eq:CTLRDSex_matrix}) for different values of parameter $p_1$: { (a) $p_1=0.75$ ; (b) $p_1=0.5$; (c) $p_1=0.25$}. The color coding refers to time $t$.}
	\label{fig:linearrandomsolutions}
\end{figure}

\begin{figure}[tbh!]
	\centering
	\includegraphics[width=1.0\linewidth]{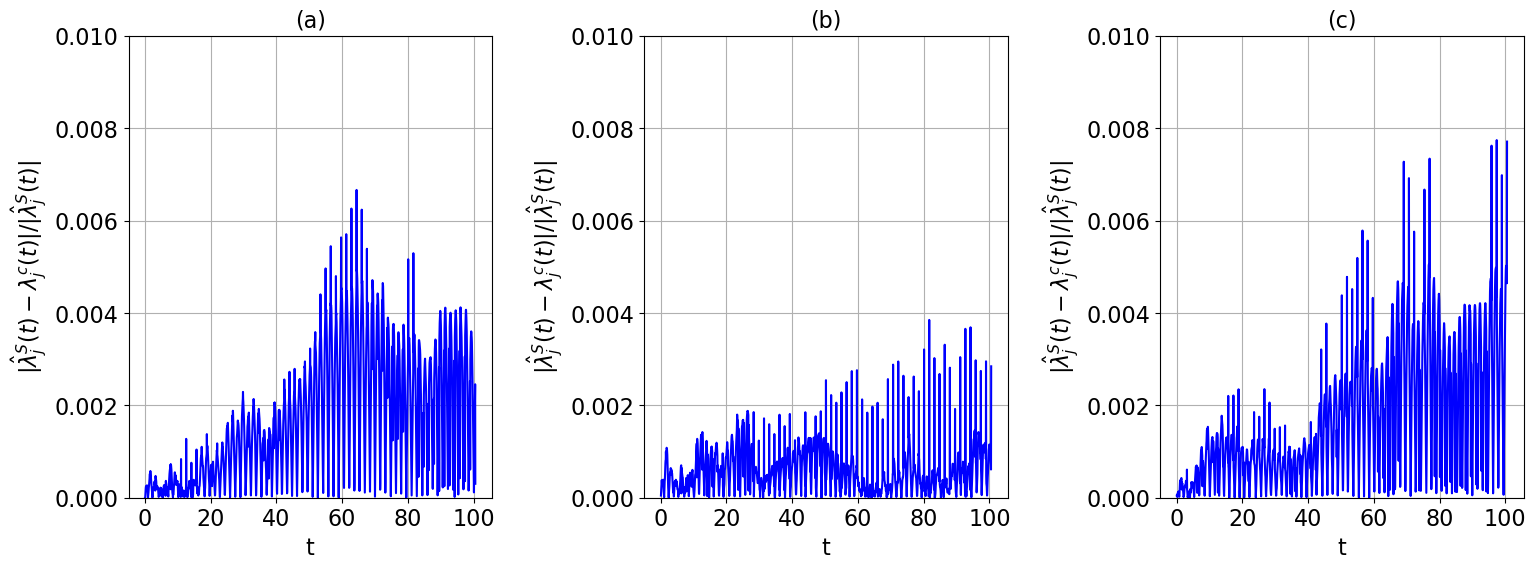}
	\caption{ Linear RDS defined by (\ref{eq:CTLRDSex_matrix}). The relative error between the eigenvalues $\hat{\lambda}_j^{S}(t)$ and the computed eigenvalues $\lambda^c_j(t)$ using DMD RRR algorithm for different values of parameter $p_1$: (a) $p_1=0.75$ ; (b) $p_1=0.5$; (c) $p_1=0.25$.}
	\label{fig:linearRDS_computedeigenvalues}
\end{figure}

\subsection{Stochastic differential equations examples}\label{sec:Numerics_SDE}
In this section we consider the flow induced by the autonomous SDE of the form (\ref{eq:SDE}) with random force accounting for the effect of the noise and disturbances of the system. In the considered examples the numerical solutions are determined by using the Euler-Mayurama method and the Runge-Kutta method for SDE developed in \cite{Rossler}. We use the fact that the convergence of Euler-Maruyama method is, for the examples we consider here, proved in \cite{HutzJent_SDE},
so that with very small time step we obtain the reference solutions and then check the numerical solutions obtained with Runge-Kutta method.
Nevertheless, the Runge--Kutta method performs significantly better  and needs less computational effort to attain the same accuracy since much larger time step can be used.

\subsubsection{Linear scalar SDE}
We consider the SDE
\begin{equation}\label{eq:LS_F}
dX = \mu X dt + \sigma dW_t,
\end{equation}
with $\mu < 0$ and $\sigma > 0$, { that generates the one-dimensional Ornstein-Uhlenbeck process \cite{Pavliotis}}. It is known that in the deterministic case, i.e., for $\sigma=0$, the Koopman eigenvalues are equal to
$$\lambda_n = n \mu,\quad n=0,1,2,\ldots$$
and the related Koopman eigenfunctions are 
$$\phi_n(x) = x^{n}.$$
In the stochastic case, the spectral properties of the stochastic Koopman generator  associated with the given equation were studied, for example in \cite{Gaspard_etal,Pavliotis} {(under the name backward Kolmogorov equation or Fokker-Planck equation)}.  Its eigenvalues are the same as in deterministic case, while the eigenfunctions are {(see \cite{Gaspard_etal,Pavliotis})}
$$\phi_n(x) = a_n H_n (\alpha x), \quad \textstyle \alpha=\sqrt{\frac{|\mu|}{\sigma}}, \quad n=0,1,2,\ldots.$$
Here $H_n$ denotes Hermite polynomials and $a_n$ are normalizing parameters.

It is known that the accuracy of the eigenvalues and eigenfunctions obtained by DMD algorithms is closely related to the choice of observable functions. In the deterministic case, when we take monomials $f_j(x) = x^{j}, j=1,\ldots,n$ as observable functions, satisfactory results for the first $n$ eigenvalues were obtained for moderate values of $n$, for example $n=10$,  while for larger values of $n$ significant numerical errors arise and only few eigenvalues were captured correctly. That is a consequence of highly ill-conditioned system when monomials of higher order $x^{n}$ are used, since their evolution, given by { $e^{n \mu t }x^{n}$, tends quickly to zero. Therefore the columns of the matrix $\X_m$ become highly linearly dependent.} In the case with $n=10$ and $m=2000$ snapshots, the ten leading eigenvalues are determined with the accuracy greater than $0.01$  (see Figure \ref{fig:ls_detsto}(b)). It is worth mentioning
that the standard DMD algorithm, which does not include scaling of data, was more sensitive and when it was applied on the same series of snapshots, at most three eigenvalues with the accuracy greater than $0.01$ were computed. 

In the stochastic case, the DMD RRR algorithm was applied on the same set of observable functions as in the deterministic case. { We use both approaches described in the Subsection \ref{subsec:DMDRRR} for defining input matrices $\X_m$ and $\Y_m$. In the first approach, for chosen $m=100$ initial points from the interval $[-1,1]$ and $n$ dimensional observable vector, we define $\X_m$, $\Y_m \in \mathbb{R}^{n\times m}$ by (\ref{eq:DMDRRR_XY_1}), where $\Y_m$ is determined for $k=100$ and by using $N=1000$ trajectories for each initial condition. In the second approach, for one chosen initial point, we generate $N=1000$ trajectories to determine the approximations of the expected values of the observable functions, which are then used in (\ref{eq:DMDRRR_XY_2}) to form input matrices $\X_m$ and $\Y_m$ for $m=2000$. The eigenvalues computed by using the first approach lead to very accurately computed eigenvalues of the Koopman operator (see Figure \ref{fig:ls_detsto}(d)) with the accuracy similar as in deterministic case. The accuracy depends on the number of computations $N$, as well as on the number and on the distribution of the initial points. In the second approach, the number of eigenvalues captured with satisfactory accuracy decreases and in the presented case only four of them are captured with the accuracy greater than $10^{-2}$ (see Figure \ref{fig:ls_detsto}(e)).  The eigenfunctions associated with this four eigenvalues are captured with satisfactory accuracy (Figure \ref{fig:ls_detsto}(f)). The reason for the lower accuracy could be a consequence of the errors that are introduced into the matrices $\X_m$ and $\Y_m$ when the expected values of the observable functions are approximated by averaging its values along the trajectories. When multiple initial conditions are used such type of error is introduced only in the matrix $\Y_m$ and not in the matrix $\X_m$.  The accuracy of the algorithm could be increased by increasing the number of computed trajectories since then, due to the law of large numbers, the more accurate approximations of expected values of the observables should be obtained. }  

\begin{figure}[!htb]
	\centering
	\includegraphics[width=1.0\linewidth]{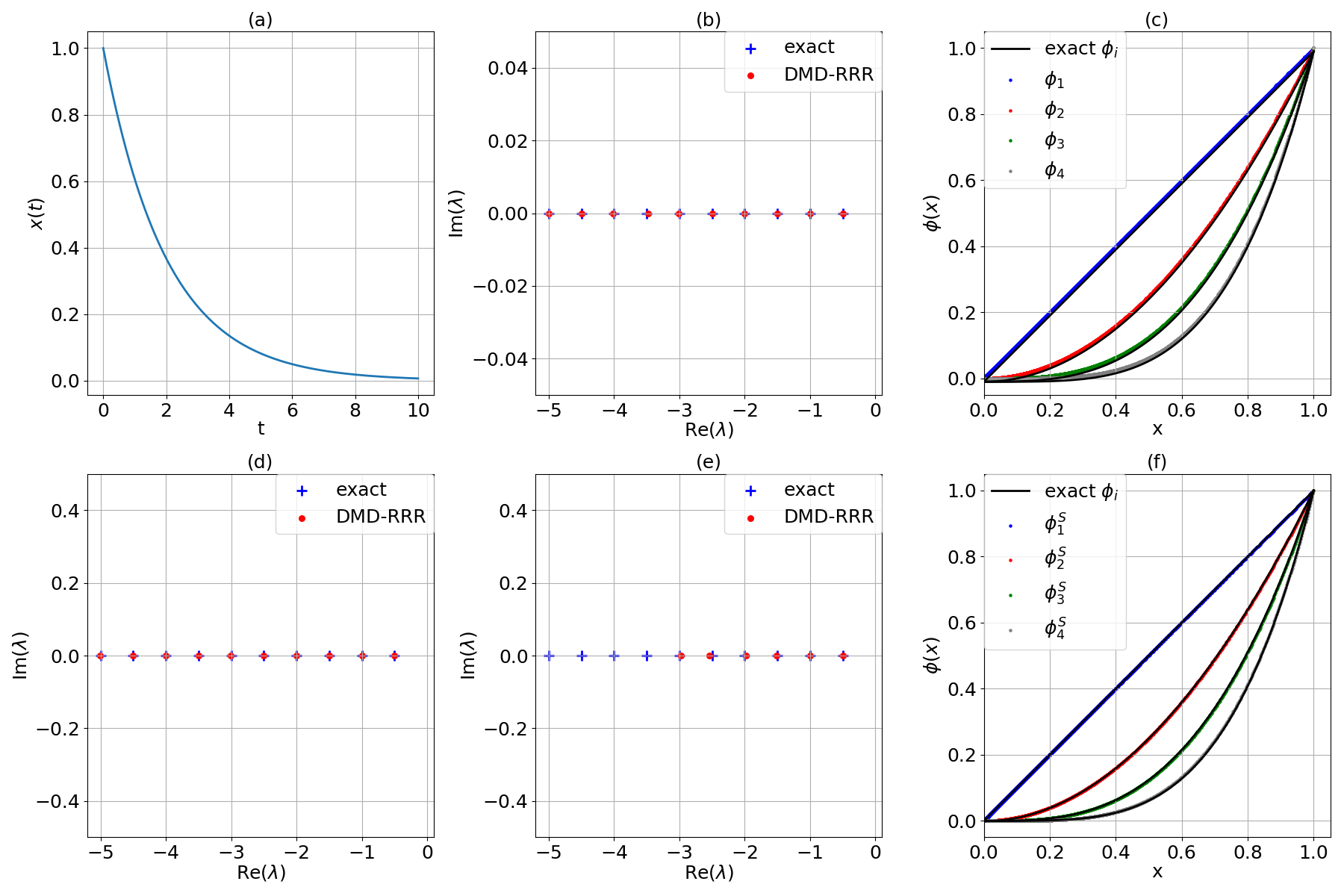}
	\caption{Linear scalar equation (\ref{eq:LS_F}). Deterministic case $\mu = -0.5$: (a) solution; (b) Koopman eigenvalues; (c) Koopman eigenfunctions;  Stochastic case $\mu = -0.5$  $\sigma = 0.001$: (d) stochastic Koopman eigenvalues - 1st approach: DMD RRR with (\ref{eq:DMDRRR_XY_1}); (e) stochastic Koopman eigenvalues - 2nd approach: DMD RRR with (\ref{eq:DMDRRR_XY_2}); (f) stochastic Koopman eigenfunctions.}
	\label{fig:ls_detsto}
\end{figure}

\subsubsection{{Nonlinear scalar SDE}}
One of the simplest nonlinear stochastic dynamical systems is the SDE of the form
\begin{equation}\label{eq:PB_F}
dX = (\mu X - X^{3}) dt + \sigma dW_t.
\end{equation}   
{  Its solutions are of different nature for $\mu>0$, $\mu=0$ and $\mu < 0$ \cite{Gaspard_etal} and therefore the related deterministic equation (for $\sigma=0$) usually appears in the literature when studying the bifurcation phenomena. We consider here the equation for fixed parameter $\mu < 0$. }
{ The eigenvalues and the eigenfunctions of the Liouville operator associated with this equation and its adjoint  were derived in \cite{Gaspard_etal}. Since (for $\sigma = 0$) the adjoint of the Liouville operator is actually the Koopman operator of the deterministic equation, we have from \cite{Gaspard_etal}} that the eigenvalues are equal to 
$$\lambda_n = n \mu, n = 0,1,2,\ldots,$$
while the associated eigenfunctions are
$$\phi_n(x)=\left(\frac{x}{\sqrt{x^{2}+|\mu|}}\right)^{n}.$$
In the stochastic case, when $\sigma>0$, the eigenvalues and the eigenfunctions of the generator of the stochastic Koopman operator family can be evaluated by solving the associated backward Kolmogorov equation, which can be transformed to the Schr\"{o}dinger equation (see \cite{Gaspard_etal}). We determine the approximations of eigenvalues numerically by solving Schr\"{o}dinger equation using finite difference method. For small values of $\sigma$ the eigenvalues are very similar to the deterministic case. 

As in the linear scalar case, we first apply the DMD RRR algorithm to compute the eigenvalues and the eigenfunctions of the Koopman operator in the deterministic case. The specific set of observables used in the computation is of importance for the accuracy of the results. When we use the analytically known eigenfunctions or their linear combinations as the observable functions, { the subspace spanned by these functions is closed under the action of the Koopman operator, so that its restriction on that subspace can be exactly described by a finite dimensional matrix, which is the key matrix used for obtaining outputs of DMD RRR algorithm. As a consequence, the eigenvalues and the eigenfunctions are computed with high accuracy. (see Figure \ref{fig:pb_detsto}).} On the other hand, when the set of observables was composed of monomials, the results were less accurate and only a few leading eigenvalues and eigenfunctions were computed with satisfactory accuracy. 

{ In the stochastic case, similarly as in the previous linear example, we use both DMD RRR approaches and the same set of observable functions as in deterministic case. The results obtained by using $m=100$ initial conditions, and the expected value of the observable functions determined over $N=1000$ trajectories and after $k=100$ time steps give the same number of accurately computed eigenvalues as in deterministic case (see Figure \ref{fig:pb_detsto}(d)).  On the other hand, when the second approach associated with one initial point is used, only first three eigenvalues are computed with satisfactory accuracy despite the fact that the same number of computed trajectories $N=1000$ and a large number of $m=2000$ snapshots was used. The reason for lower accuracy could be in the error introduced into the matrices $\X_m$ and $\Y_m$ as was described in the linear scalar case.}

\begin{figure}[!htb]
	\centering
	\includegraphics[width=1.0\linewidth]
	{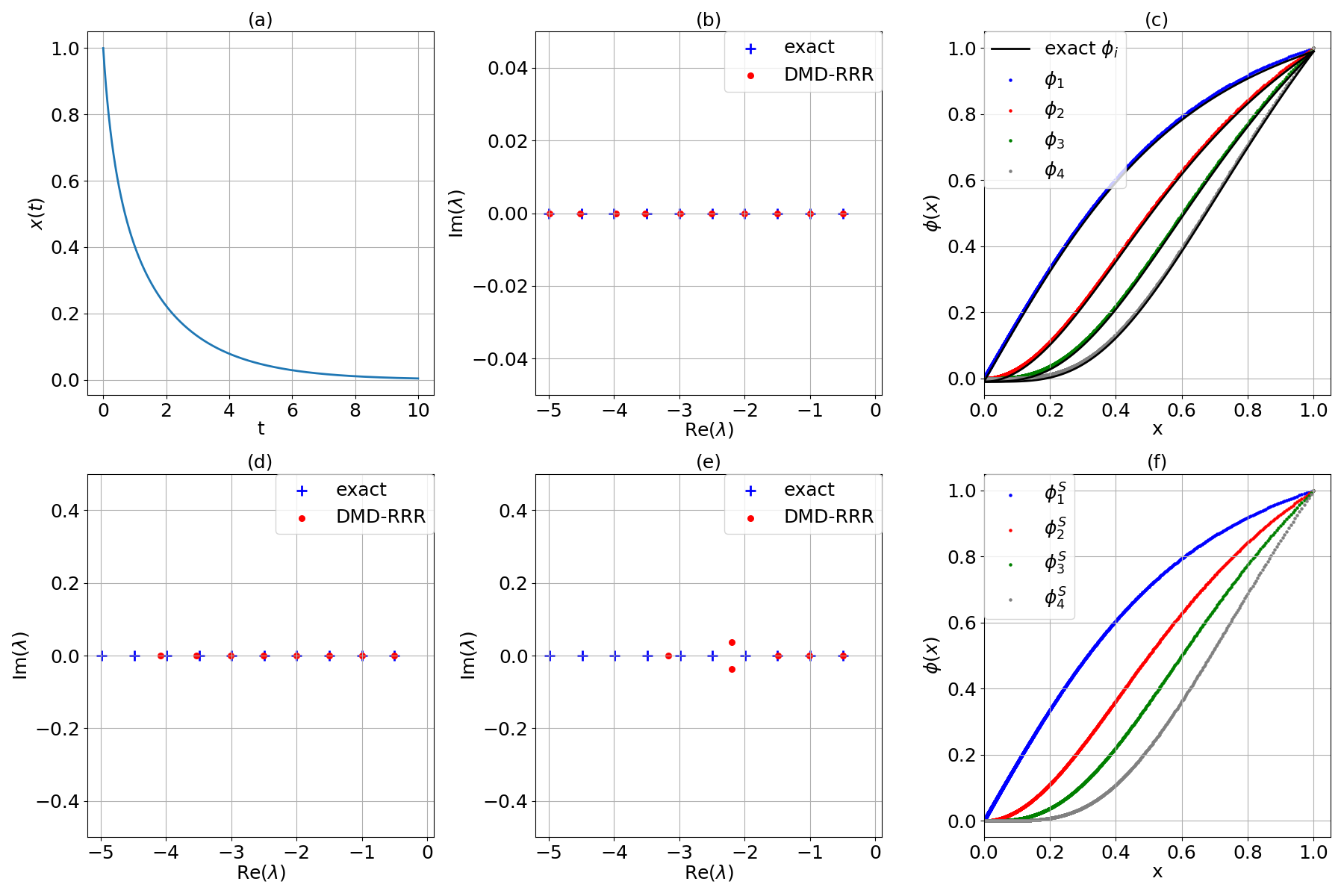}
	\caption{Nonlinear scalar equation (\ref{eq:PB_F}). Deterministic case $\mu = -0.5$: (a) solution; (b) Koopman eigenvalues; (c) Koopman eigenfunctions. Stochastic case $\mu = -0.5$  $\sigma = 0.001$: (d) stochastic Koopman eigenvalues - 1st approach: DMD RRR with (\ref{eq:DMDRRR_XY_1}); (e) stochastic Koopman eigenvalues - 2nd approach: DMD RRR with (\ref{eq:DMDRRR_XY_2}) ; (f) stochastic Koopman eigenfunctions.}
	\label{fig:pb_detsto}
\end{figure}

{
	\subsubsection{Stuart-Landau equations}
	The StuartÐLandau equations describe the behavior of a nonlinear oscillating system near the Hopf bifurcation. In polar coordinates $(r,\theta)$, the system reads
	\begin{align}\label{eq:SL_det}
	dr &= (\delta r - r^{3}) dt \nonumber \\
	d\theta &= (\gamma - \beta r^{2}) dt,
	\end{align}
	where $\beta, \gamma,$ and $\delta$ are parameters of the system. The behavior of the system depends on these parameters.
	The parameter $\delta$ associated with the Hopf bifurcation controls the stability of the fixed point $x^{*}$ $(\delta < 0)$ and of the limit cycle $\Gamma$ $(\delta > 0)$.
	We consider here the system with the limit cycle, i.e., for $\delta > 0$. According to \cite{TantetChekrounDijkstra}, the base frequency of the limit cycle $\Gamma: r=\sqrt{\delta}$ is equal to $\omega_0 = \gamma - \beta \delta$ and the eigenvalues of the system are
	$\lambda_{ln}=- 2l\delta + i n\omega_0, l \in \mathbb{N}, n \in \mathbb{Z}.$
	The stochastic Stuart-Landau equations are defined by
	\begin{align}\label{eq:SL_stoch}
	dr &= (\delta r - r^{3} + \frac{\epsilon^{2}}{r}) dt + \epsilon \,dW_r \nonumber \\
	d\theta &= (\gamma - \beta r^{2}) dt + \frac{\epsilon}{r} \, dW_\theta,
	\end{align}
	where $W_r$ and $W_\theta$ satisfy SDE system
	\begin{align*}
	dW_r &= \cos \theta dW_x + \sin \theta dW_y \nonumber \\
	dW_\theta &= - \sin \theta dW_x + \cos \theta dW_y,
	\end{align*}
	and $dW_x$ and $dW_y$ are independent Wiener processes. A detailed analysis of that system is provided in \cite{TantetChekrounDijkstra}.
	For small noise, controlled by the parameter $\epsilon$, the approximation of Koopman eigenvalues are derived in \cite{TantetChekrounDijkstra}, so that for the system with a stable limit cycle $\Gamma$ $(\delta > 0)$, the eigenvalues are given by
	\begin{equation}\label{eq:SL-eigenvalues}
	\lambda_{ln}=\begin{cases}
	- \frac{n^{2} \epsilon^{2}(1+\beta^{2})}{2 \delta} + i n \omega_0 + \mathcal{O}(\epsilon^{4}), l=0\\
	- 2l\delta + i n \omega_0 + \mathcal{O}(\epsilon^{2}), l>0
	\end{cases}.  
	\end{equation}
	
	In order to determine the Koopman eigenvalues, we apply the sHankel-DMD RRR algorithm to the scalar observable function of the form $f(r,\theta) = \sum_{k=1}^{K} {\rm e}^{\pm i k \theta}$, $k=1,\ldots,K$ in the deterministic case, and to the function $$f(r,\theta) = \sum_{k=1}^{K} {\rm e}^{\pm i k (\theta-\beta \log(r/\delta))}, k=1,\ldots,K$$
	in the stochastic case to calculate the eigenvalues $\lambda_{0,n}, n=1,\ldots,K$.
	The results for both cases, deterministic and stochastic, are presented in Figure \ref{fig:SL}. 
	For moderate values of $n$, quite good approximations of eigenvalues $\lambda_{0,n}$ are obtained. We mention that to obtain the base rotating frequency and corresponding multiples, the time span of snapshots used in the DMD RRR algorithm must include the basic period.  
	
	\begin{figure}[!htb]
		\centering
		\includegraphics[width=0.7\linewidth]
		{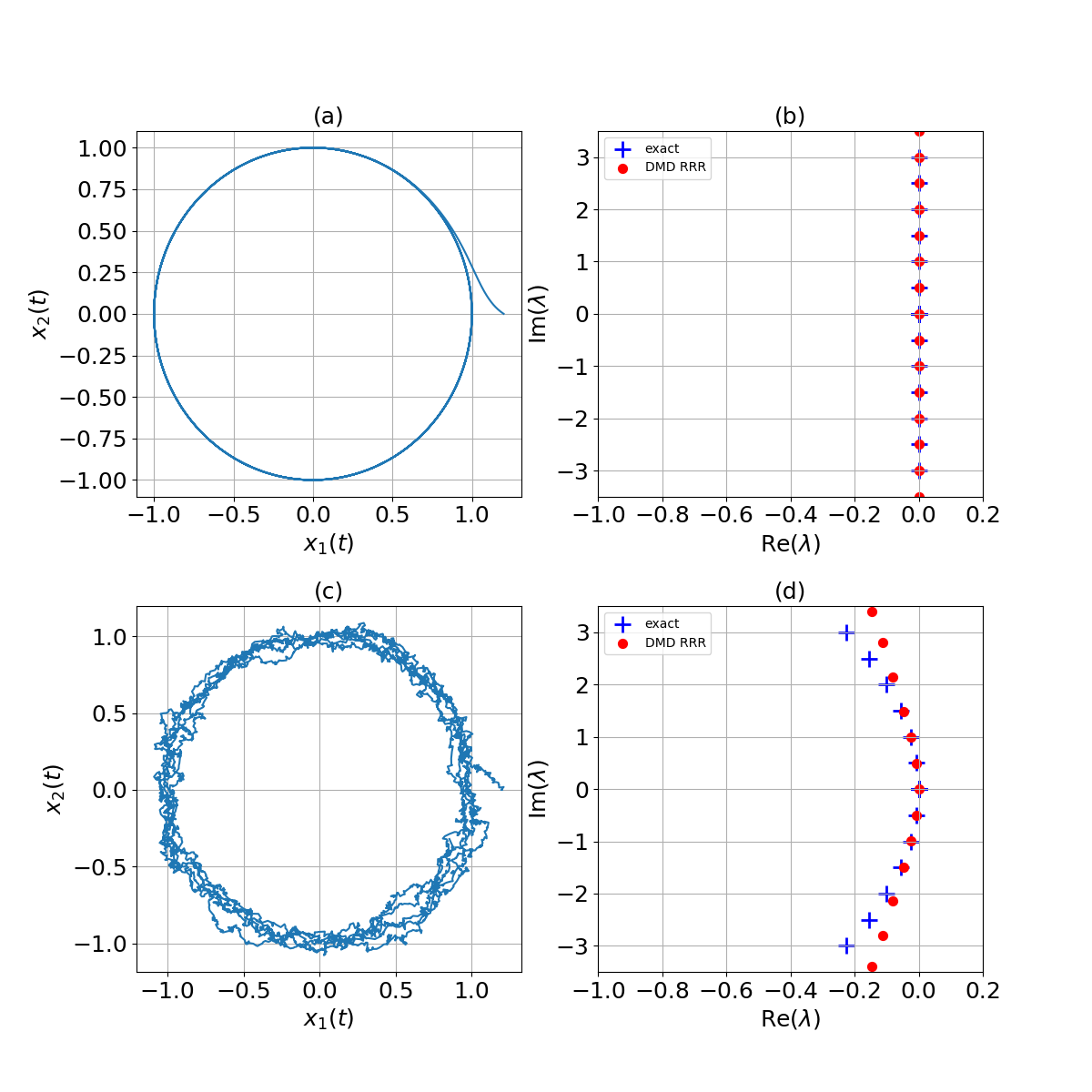}
		\caption{Stuart-Landau system with parameters $\delta=0.5, \beta=1,\gamma=1$. Deterministic case - equation (\ref{eq:SL_det}): (a) solution; (b) Koopman eigenvalues.  Stochastic case - equation (\ref{eq:SL_stoch}): (c) solution; (d) stochastic Koopman eigenvalues. The threshold for the residuals is set to $0.001$. }
		\label{fig:SL}
	\end{figure}
}

\subsubsection{Noisy van der Pol oscillator}
We analyze the two-dimensional test example in which the deterministic part is the system of two equations modeling standard Van der Pol oscillator. The stochastic part is modeled by the one-dimensional Wiener process so that the following system is considered
\begin{eqnarray}\label{eq:vanderPolOscillator}
dX_1 & = & X_2 dt \nonumber \\
dX_2 & = & \left( \mu (1-X_1^{2}) X_2 - X_1 \right) dt + \sqrt{2 \epsilon} dW_t.
\end{eqnarray}
It was proved in \cite{Arnold} that the solution of the considered SDE system exists. Furthermore, it was proved in \cite{HutzJent_SDE} that the numerical solutions computed with the Euler-Maruyama method converge to the exact solution of the considered SDE, which is also true for the Runge-Kutta method for SDE \cite{Rossler} used here. 

As in the previous examples, we consider first the deterministic system. It is well known that the dynamics converge to an asymptotic limit cycle whose basin of attraction is the whole $\mathbb{R}^{2}$. { According to \cite{Strogatz}, for the base frequency of the limit cycle the expression $\omega_0 = 1-\frac{1}{16}\mu^{2}+\mathcal{O}(\mu^{3})$ is valid,} thus for the parameter value $\mu = 0.3$ the basic frequency of the limit cycle is $\omega_0 \approx 0.995$. For the chosen initial state, the base frequency computed by applying the Hankel DMD RRR algorithm using the  scalar observable function $f(X_1,X_2)=X_1 + X_2 + \sqrt{X_1^{2}+X_2^{2}}$, is equal to $0.9944151$. 
In Figure \ref{fig:VPDMDRRR} we present the eigenvalues computed with the standard DMD algorithm and the eigenvalues obtained by using DMD RRR algorithm for which  the residuals of eigenvectors { determined using (\ref{eq:DMDresidual})} are smaller than the chosen threshold  { $\eta_0 = 10^{-3}$}. The number of eigenvalues and eigenvectors obtained by using both DMD algorithms is $250$ for the chosen Hankel matrix with $n=250$ columns and a larger number of rows, but for only few of them the residuals of the eigenvectors do not exceed the chosen threshold. The eigenvalues obtained using DMD RRR algorithm form a lattice structure containing the eigenvalues of the form $\{k \omega_0, - \mu + k \omega_0 : k \in \mathbb{Z} \}$, which is consistent with the theoretical results given in \cite{mezic:2016}. As evident from Figure \ref{fig:VPDMDRRR}, the standard DMD algorithm without residue evaluation computes many spurious eigenvalues.
In Figure \ref{fig:VPdetsto} we present the solution and the eigenvalues for which the residuals of eigenvectors do not exceed the chosen threshold in DMD RRR. The time evolution of the real part of the first three eigenfunctions of the Van der Pol system presented in the same figure coincide with the results presented in \cite{ArbabiMezic2016}.

The same approach as in the deterministic case is applied on the solution obtained from the stochastic system (\ref{eq:vanderPolOscillator}). From Figure \ref{fig:VPdetsto} we can conclude that some sort of stochastic asymptotic limit cycle appears, i.e., for large $t$ the solution is "smeared out" around the deterministic limit cycle. {Such statistical equilibrium can be determined by solving the associated Fokker-Planck equation. In \cite{FantuzzivanderPol} the energy bounds of the noisy van der Pol oscillator were determined. For small and moderate noise, the base frequency of the averaged limit cycle remains similar to the deterministic case \cite{LeungvanderPol}. Here we take the noise parameter $\epsilon=0.005$.
	The sHankel-DMD algorithm is applied to the same observable function as in the deterministic case and by using the same matrix dimension.  	} 
The eigenvalues and eigenfunctions of the related stochastic Koopman operator are not known analytically. { They can be approximated by the numerical discretization of the associated generator, however their evaluation is out of the scope of this paper. Under the assumption that the eigenvalues provided by the DMD RRR algorithm are good approximations for the exact eigenvalues, their negative real part suggests that the RDS in question is stable.} Again, as in the deterministic case, only the eigenvalues for which the residuals of the corresponding eigenvectors do not exceed the threshold $\eta_0 = 10^{-3}$ are selected and presented in Figure \ref{fig:VPdetsto}.
By comparing the results shown in Figure \ref{fig:VPdetsto}, we conclude that the numerically determined time evolution of eigenfunctions in the stochastic case is very similar to the evolution of eigenfunctions in the deterministic case.

\begin{figure}[!htb]
	\centering
	\includegraphics[width=0.7\linewidth]{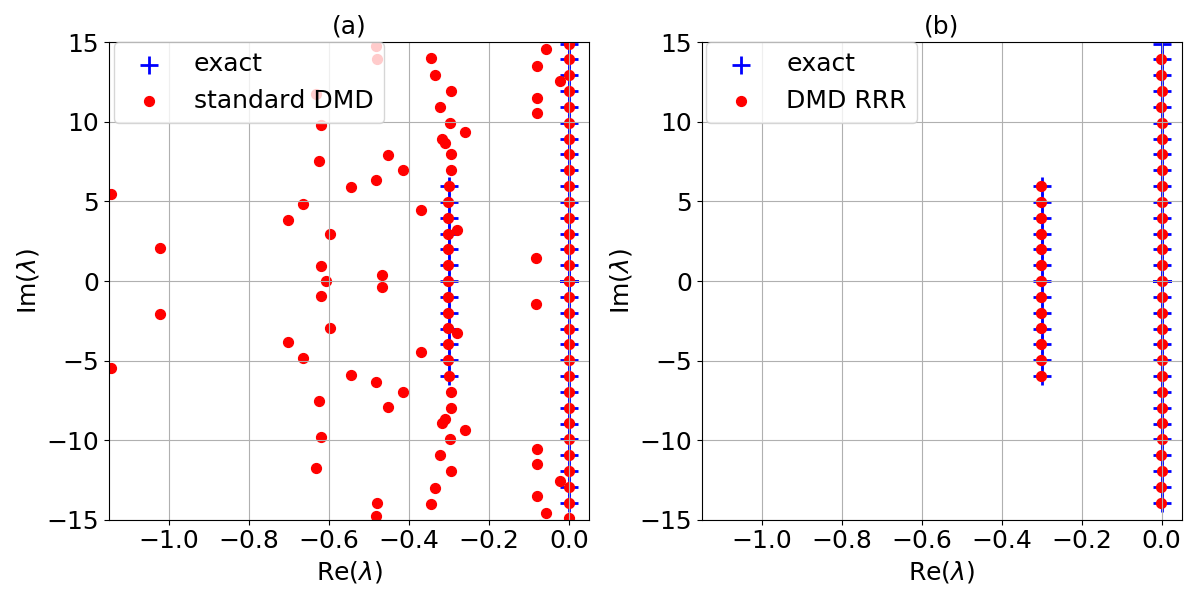}
	\caption{Van der Pol oscillator (\ref{eq:vanderPolOscillator}).  Deterministic case: (a) eigenvalues obtained by using standard DMD algorithm; (b) eigenvalues obtained by using DMD RRR algorithm with the threshold for the residuals equal to $0.001$.}	
	\label{fig:VPDMDRRR}
\end{figure}

\begin{figure}[!htb]
	\centering
	\includegraphics[width=1.0\linewidth]{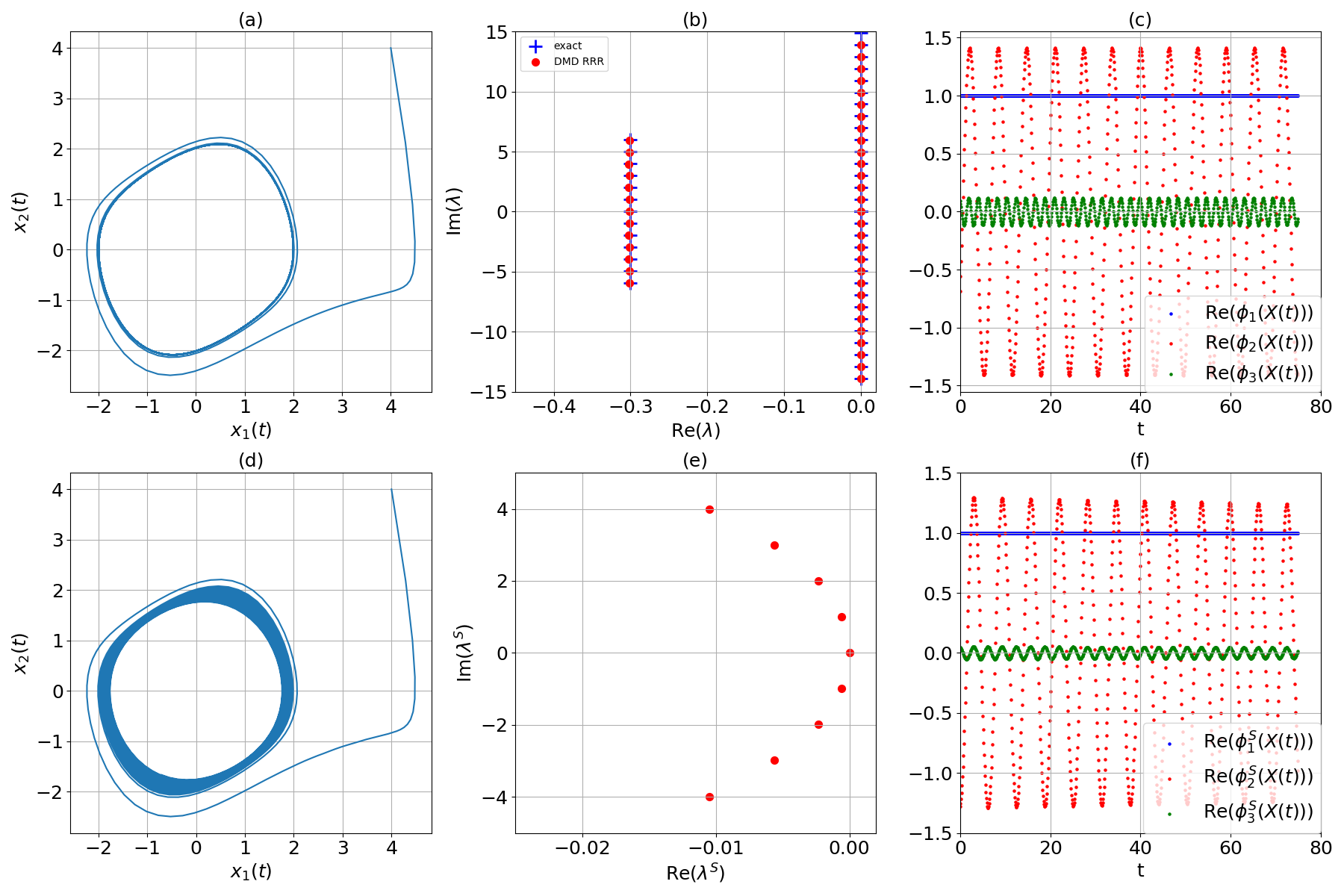}
	\caption{Van der Pol oscillator (\ref{eq:vanderPolOscillator}).  Deterministic case: (a) solution; (b) Koopman eigenvalues; (c) the time evolution of real part of Koopman eigenfunctions along trajectories. Stochastic case $\epsilon=0.005$: (d) solution; (e) stochastic Koopman eigenvalues; (f) the time evolution of real part of stochastic Koopman eigenfunctions along trajectories. The threshold for the residuals is set to $0.001$.}	
	\label{fig:VPdetsto}
\end{figure}

\subsubsection{Noisy Lotka-Volterra (predator-prey) system}
Here we consider the Lotka-Volterra system with competition, describing the predator prey system with two  species. Such type of a model is typically used by biologists and ecologists for modeling the population dynamics. The corresponding noisy system is given by: 
\begin{eqnarray}\label{eq:LotkaVoltera}
dX_1 & = & \left(a_1 - b_1 X_2 - c_1 X_1  \right) X_1  dt + \sigma_1 X_1 dW_t^{1} \nonumber \\
dX_2 & = & \left(- a_2  + b_2 X_1  - c_2 X_2 \right) X_2 dt + \sigma_2 X_2 dW_t^{2}.
\end{eqnarray}
The model parameters $a_1,b_1,c_1, a_2,b_2,c_2 > 0$ depend on the particular species under consideration. The intensity of the noise is modeled by nonnegative parameters $\sigma_1$ and $\sigma_2$. 

We first consider the properties of the deterministic system. The first quadrant $\mathbb{R}_+^{2}=\{(X_1,X_2) | \ X_1 \ge 0, X_2 \ge 0\}$ is the domain of evolution of the system. There is an equilibrium point on the $x_1$-axis $(\frac{a_1}{c_1},0)$. If the nullclines $a_1 - b_1 X_2 - c_1 X_1 = 0$ and $- a_2  + b_2 X_1  + c_2 X_2 = 0$ do not intersect, the equilibrium  point on the positive $x_1$ axis is unique. On the other hand, if these lines do intersect at the point $({x}^{*}_1,{x}^{*}_2)$, it can be shown that this is an asymptotically stable equilibrium point whose basin of attraction is $\mathbb{R}_+^{2} $ with the exception of the axes. The considered system can be linearized around the equilibrium point. As known from the Hartman-Grobman theorem, if all the eigenvalues of the Jacobian matrix at the fixed point have negative (or positive) real part, then there exist a $\mathcal{C}^{1}$-diffeomorphism ${\bf h}$ defining the conjugacy map that locally transform the system  to the linear one. Even more, as proved in \cite{LanMezic}, if all the eigenvalues have negative real part (i.e., if the fixed point is exponentially stable), the $\mathcal{C}^{1}$-diffeomorphism ${\bf h}$ is defined on the whole basin of attraction and the system is conjugate to the linear one globally on the basin of attraction. { This implies that the eigenvalues of the Koopman operator are the same as the eigenvalues of the Koopman operator associated with the linear system}. 

We select $a_1 = 1.0$, $b_1 = 0.5$, $c_1 = 0.01$, $a_2 = 0.75$, $b_2 = 0.25$, $c_2 = 0.01$. 
The off-axes equilibrium point is equal to $({x}^{*}_1,{x}^{*}_2) = (3.07754, 1.93845)$. Since the eigenvalues of the Jacobian matrix at this point are equal to $\lambda_{1,2} = -0.02500799 \pm 0.863524 \,i$, we conclude that this is an exponentially stable fixed point and that the system is conjugate to the linear one on its basin of attraction. 

To  numerically  evaluate the eigenvalues and eigenfunctions of the Koopman operator we use, as in the Van der Pol oscillator example, the Hankel DMD RRR algorithm, where the Hankel matrix is defined for the scalar observable function $f(X_1,X_2) = X_1 + X_2$. We use the matrix with $n=250$ rows and $m=100$ columns. By taking into account only the eigenvalues related to the eigenvectors with the residuals smaller than $\eta_0 = 10^{-3}$, several leading eigenvalues of the Koopman operator are captured with high accuracy (see Figure \ref{fig:PP_detsto}). The principal eigenvalues $\lambda_{1,2}$ are calculated with the accuracy greater than $10^{-6}$.

{ The solution and the asymptotic properties of noisy Lotka Volterra equation (\ref{eq:LotkaVoltera}) are considered in \cite{Arato}. In the stochastic case, for small enough parameters $\sigma_1$ and $\sigma_2$, one can determine the coordinates of the fixed point $(\bar{x}_1^{*},\bar{x}_2^{*})$ as the solution of equations: $a_1 - \frac{\sigma_1^{2}}{2} - b_1 X_2 - c_1 X_1 = 0$ and $a_2 - \frac{\sigma_2^{2}}{2} - b_1 X_2 - c_1 X_1 = 0$. According to \cite{Arato}, one can expect heuristically that $\xi_1 = X_1 - \bar{x}_1^{*}$ and $\xi_2 = X_2 - \bar{x}_2^{*}$ are components of the Ornstein-Uhlenbeck process in two dimensions, which is a solution of the corresponding linear SDE obtained by the linearization around the point $(\bar{x}_1^{*},\bar{x}_2^{*}) $. For small enough  $\sigma_1$ and $\sigma_2$, the fixed point $(\bar{x}_1^{*},\bar{x}_2^{*})$ remains asymptotically stable. 
	
	In the stochastic example that we consider here with the same parameters as in the deterministic case and for the variance parameters $\sigma_1 = \sigma_2 = 0.05$, the fixed point is equal to $(\bar{x}_1^{*},\bar{x}_2^{*})=(3.08243, 1.93585)$. After linearizing (\ref{eq:LotkaVoltera}) around this point, we get that the eigenvalues of the corresponding Jacobian matrix are equal to $\lambda_{1,2}^{S}=-0.02509 \pm 0.86363 i$. These eigenvalues coincide with the principal eigenvalues of the stochastic Koopman generator associated with the linear system.}

{
	Like in the deterministic case, we use the sHankel-DMD RRR algorithm, applied to the same observable function as before. The expectations are determined as the average values over $N=1000$ trajectories (see Figure \ref{fig:PP_detsto}). The principal eigenvalues of the Koopman operator in the considered stochastic case are evaluated by using the larger Hankel matrix than in deterministic case, with $n=750$ and $m=250$. Using the residual threshold $\eta_0 = 10^{-3}$, we chose only the eigenvalues related to the eigenvectors having small enough residual. The remaining set of eigenvalues is presented in Figure \ref{fig:PP_detsto}(d). One can see that the principal eigenvalues are determined quite accurately, however, the higher order ones are missing.}

\begin{figure}[!htb]
	\centering
	\includegraphics[width=0.7\linewidth]{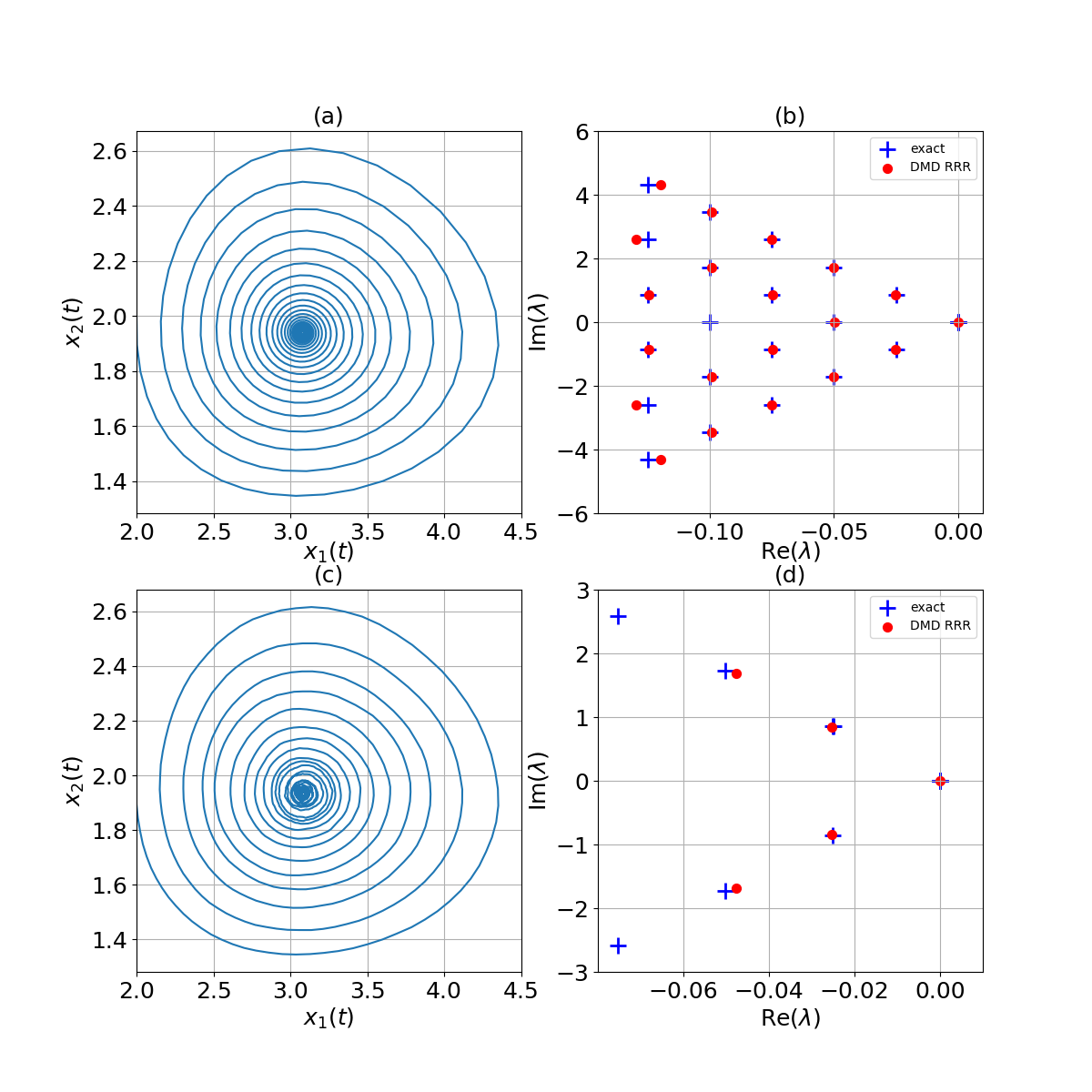}
	\caption{Lotka-Volterra system (\ref{eq:LotkaVoltera}). Deterministic case: (a) solution; (b) Koopman eigenvalues.  Stochastic case: (a) solution; (b) stochastic Koopman eigenvalues - the exact eigenvalues refer to the determined eigenvalues $\lambda_{1,2}^{S}$ that we heuristically expect to be valid in the stochastic case.
		The threshold for the residuals is set to $0.001$.}
	\label{fig:PP_detsto}
\end{figure}

{
	\section{Conclusions}
	In this work we consider the generators and spectral objects of stochastic Koopman operators associated with random dynamical systems. We explore  linear RDS, without invoking Markovian property, and (generally nonlinear) RDS with the Markov property that implies  the associated stochastic Koopman family is a semigroup. 
	
	To numerically compute the eigenvalues and the eigenfunctions of the stochastic Koopman operators family, we propose  approaches that enable use of  DMD algorithms in the stochastic framework for both Markovian 
	and non-Markovian cases. We define the stochastic Hankel DMD algorithm and prove that under the assumption of  ergodicity of RDS and the existence of finite dimensional invariant Krylov subspace, it converges to the true eigenvalues and eigenfunctions of the stochastic Koopman operator.
	
	The results presented for a variety of numerical examples show stability of the proposed numerical algorithms that reveal the spectral objects of the stochastic Koopman operators. Specifically, we use the DMD RRR algorithm \cite{Drmac2017} that enables precise determination of spectral objects via control of the residuals. 
	
	There are many interesting questions remaining in the study of stochastic Koopman operators. Some of the most intriguing ones relate to relationship of the geometry of the 
	level sets of stochastic Koopman operator eigenfunctions to the dynamics of the underlying RDS, initiated in \cite{mezicandbanaszuk:2004} for the case of asymptotic dynamics, and eigenvalues on the unit circle. We are currently pursuing such work for the more general class of eigenvalues off the unit circle and their associated eigenfunctions.
}

\section*{Acknowledgment}
This research has been supported by the DARPA Contract HR0011-16-C-0116 "On A
Data-Driven, Operator-Theoretic Framework for Space-Time Analysis of Process Dynamics" and AFOSR grants FA9550-08-1-0217 and FA9550-17-C-0012.
We are thankful to Milan Korda, Allan Avila for useful comments.

\bibliographystyle{plain}
\bibliography{bibliography}

\end{document}